\def\supp{\mathrm{supp}}
\def\rmd{\mathrm{d}}
\def\rmT{\textup{T}}
\def\area{\text{area}}
\def\vol{\text{vol}}
\def\vc{\boldsymbol{c}}
\def\vl{\boldsymbol{l}}
\def\ve{\boldsymbol{e}}
\def\vg{\boldsymbol{g}}
\def\vh{\boldsymbol{h}}
\def\vp{\boldsymbol{p}}
\def\vq{\boldsymbol{q}}
\def\vs{\boldsymbol{s}}
\def\vu{{\boldsymbol{u}}}
\def\vv{{\boldsymbol{v}}}
\def\vw{{\boldsymbol{w}}}
\def\vx{{\boldsymbol{x}}}
\def\vy{{\boldsymbol{y}}}
\def\mA{\boldsymbol{A}}
\def\mH{\boldsymbol{H}}
\def\mP{\boldsymbol{P}}
\def\mR{\boldsymbol{R}}
\def\mT{\boldsymbol{T}}
\def\mU{\boldsymbol{U}}
\def\mM{\boldsymbol{M}}
\def\valpha{\boldsymbol{\alpha}}
\def\vbeta{\boldsymbol{\beta}}
\def\vpsi{\boldsymbol{\psi}}
\def\vlambda{\boldsymbol{\lambda}}
\def\cA{\mathcal{A}}
\def\cG{\mathcal{G}}
\def\cH{\mathcal{H}}
\def\cK{\mathcal{K}}
\def\cS{\mathcal{S}}
\def\cU{\mathcal{U}}
\def\PP{\mathbb{P}}
\def\SS{\mathbb{S}}
\def\RR{\mathbb{R}}
\def\bfc{{\boldsymbol{c}}}
\def\bfe{{\boldsymbol{e}}}
\def\bff{{\boldsymbol{f}}}
\def\bfi{{\boldsymbol{i}}}
\def\bfj{{\boldsymbol{j}}}
\def\bfk{{\boldsymbol{k}}}
\def\bfp{{\boldsymbol{p}}}
\def\bfq{{\boldsymbol{q}}}
\def\bfu{{{\boldsymbol{u}}}}
\def\bfx{{\boldsymbol{x}}}
\def\bfy{{\boldsymbol{y}}}
\def\bfone{{\boldsymbol{1}}}
\def\bfxi{{ {\boldsymbol{\xi}} }}
\def\bfbeta{{ {\boldsymbol{\beta}} }}
\def\bfalpha{{ {\boldsymbol{\alpha}} }}
\def\bfA{{\boldsymbol{A}}}
\def\PS{
\begin{tikzpicture}[scale = 1.7]
\draw[line width = 0.5] (-0.1, 0.0) -- (0.1,0.0);
\draw[line width = 0.5] (-0.1, 0.0) -- (0.0,0.1732);
\draw[line width = 0.5] ( 0.1, 0.0) -- (0.0,0.1732);
\end{tikzpicture}
}
\def\PSsmall{
\begin{tikzpicture}[scale = 1.3]
\draw[line width = 0.5] (-0.1, 0.0) -- (0.1,0.0);
\draw[line width = 0.5] (-0.1, 0.0) -- (0.0,0.1732);
\draw[line width = 0.5] ( 0.1, 0.0) -- (0.0,0.1732);
\end{tikzpicture}
}
\def\PSB{
\begin{tikzpicture}[scale = 1.7]
\draw[line width = 0.5] (-0.1, 0.0) -- (0.1,0.0);
\draw[line width = 0.5] (-0.1, 0.0) -- (0.0,0.1732);
\draw[line width = 0.5] ( 0.1, 0.0) -- (0.0,0.1732);

\draw[very thin]  (-0.05, 0.5*0.1732) -- (0.05, 0.5*0.1732);
\draw[very thin]  (-0.05, 0.5*0.1732) -- (0.0, 0.0);
\draw[very thin]  (0.0, 0.0) -- (0.05, 0.5*0.1732);

\draw[very thin]  (0.0, 0.0) -- (0.0,0.1732);
\draw[very thin] (-0.1, 0.0) -- (0.05, 0.5*0.1732);
\draw[very thin] (-0.05, 0.5*0.1732) -- (0.1,0.0);
\end{tikzpicture}
}
\def\PSC{
\begin{tikzpicture}[scale = 1.7]
\draw[line width = 0.5] (-0.1, 0.0) -- (0.1,0.0);
\draw[line width = 0.5] (-0.1, 0.0) -- (0.0,0.1732);
\draw[line width = 0.5] ( 0.1, 0.0) -- (0.0,0.1732);

\draw[very thin]  (0.0, 0.0) -- (0.0,0.1732);
\draw[very thin] (-0.1, 0.0) -- (0.05, 0.5*0.1732);
\draw[very thin] (-0.05, 0.5*0.1732) -- (0.1,0.0);
\end{tikzpicture}
}
\def\PSH{
\begin{tikzpicture}[scale = 1.7]
\draw[line width = 0.5] (-0.1, 0.0) -- (0.1,0.0);
\draw[line width = 0.5] (-0.1, 0.0) -- (0.0,0.1732);
\draw[line width = 0.5] ( 0.1, 0.0) -- (0.0,0.1732);

\draw[very thin]  (0.0, 0.05773) -- (0.0,0.1732);
\draw[very thin] (-0.1, 0.0) -- (0.0, 0.05773);
\draw[very thin] (0.0, 0.05773) -- (0.1,0.0);
\end{tikzpicture}}
\newcommand{\SimS}[1]{\raisebox{-0.75em}{\includegraphics[scale=0.54, clip = true, trim = 2 0 1 0]{SimplexSplineSmall#1.pdf}}}
\newcommand{\SimSgeneric}{\raisebox{-0.75em}{\includegraphics[scale=0.54]{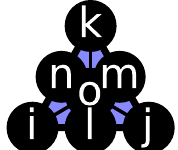}}}
\newcommand{\SimSconstant}[1]{\raisebox{-0.75em}{\includegraphics[scale=0.54, clip = true, trim = 7 0 7 7]{SimplexSplineSmallConstant#1.pdf}}}
\newcommand{\SmpS}[1]{\raisebox{-0.75em}{\includegraphics[scale=0.06]{ss-#1.pdf}}}
\title[]{B-spline-like bases for $C^2$ cubics on the Powell-Sabin 12-split}
\author[T. Lyche]{\firstname{Tom} \lastname{Lyche}}
\address{University of Oslo, Department of Mathematics, P.O. Box 1053, Blindern, NO-0316, Oslo, Norway}
\email{tom@math.uio.no}
\author[G. Muntingh]{\firstname{Georg} \lastname{Muntingh}}
\address{SINTEF Digital, Department of Mathematics and Cybernetics, P.O. Box 124 Blindern, NO-0314, Oslo, Norway}
\email{georg.muntingh@sintef.no}
\keywords{Stable bases; Powell-Sabin 12-split; Simplex splines, Marsden identity; Quasi-interpolation}
\subjclass[2010]{Primary 41A15; Secondary 65D07, 65D17}
\begin{document}
\maketitle

\begin{abstract}
For spaces of constant, linear, and quadratic splines of maximal smoothness on the Powell-Sabin 12-split of a triangle, the so-called S-bases were recently introduced. These are simplex spline bases with B-spline-like properties on the 12-split of a single triangle, which are tied together across triangles in a B\'ezier-like manner.

In this paper we give a formal definition of an S-basis in terms of certain basic properties. We proceed to investigate the existence of S-bases for the aforementioned spaces and additionally the cubic case, resulting in an exhaustive list. From their nature as simplex splines, we derive simple differentiation and recurrence formulas to other S-bases. We establish a Marsden identity that gives rise to various quasi-interpolants and domain points forming an intuitive control net, in terms of which conditions for $C^0$-, $C^1$-, and $C^2$-smoothness are derived.
\end{abstract}

\section{Introduction}
\subsection{Motivation}
Piecewise polynomials, or \emph{splines}, defined over triangulations have applications in many branches of science, ranging from scattered data fitting to finding numerical solutions to partial differential equations. See \cite{Lai.Schumaker07,Ciarlet.78} for comprehensive monographs. 

In applications like geometric modelling \cite{Cohen.Riensenfeld.Elber01} and solving PDEs by isogeometric methods \cite{Hughesbook} one often desires a low degree spline with higher smoothness. For a general triangulation, it was shown in Theorem~1.(ii) of \cite{Zenisek.74} that the minimal degrees of a triangular $C^1$ and $C^2$ element are $5$ and $9$, respectively. To obtain smooth splines of lower degree one can split each triangle in the triangulation into several subtriangles. Three such splits are the Clough-Tocher split $\PSH$ (CT), the Powell-Sabin 6-split $\PSC$ (PS6) and 12-split $\PSB$ (PS12) of a triangle~$\PS$, with 3, 6, and 12 subtriangles, respectively.
On these splits global $C^1$-smoothness can be obtained with degree 3 for CT, degree 2 and 3 for  PS6  and degree 2 for PS12 \cite{CT65,GS2017,Powell.Sabin77}. $C^2$-smoothness is achieved for PS6 and PS12 using degree $5$; see \cite{Lai.Schumaker03,Schumaker.Sorokina06,Speleers10} on a general (planar) triangulation. 

To compute with splines we need a basis for the spline space. In the univariate case B-splines have many advantages. They lead to banded matrices with good stability properties for low degrees and can be computed efficiently using stable recurrence relations. We would like similar bases for splines on triangulations.  In \cite{Cohen.Lyche.Riesenfeld13} a basis, called the \emph{S-basis}, was introduced for $C^1$ quadratics on the PS12-split. The S-basis consists of simplex splines \cite{Micchelli79, Prautzsch.Boehm.Paluszny02} and has all the usual properties of univariate B-splines, including a recurrence relation down to piecewise linear polynomials and a Marsden identity. Global $C^1$-smoothness is achieved by connecting neighboring triangles using classical B\'ezier techniques. This basis has been applied for swift assembly of the stiffness matrices in the finite element method \cite{Stangeby18}.

For a quintic B-spline-like basis on the PS12-split see \cite{Lyche.Muntingh14, Lyche.Muntingh16}. This basis has $C^3$ supersmoothness on each macro triangle and has global $C^2$-smoothness. Moreover, in addition to giving $C^1$- and $C^2$-smooth spaces on any triangulation, these spaces on the PS12-split are suitable for multiresolution analysis \cite{Davydov.Yeo13, DynLyche98, Lyche.Muntingh14, Oswald92}. A similar B-spline-like simplex basis has been constructed on the CT-split \cite{Lyche.Merrien18}, while for the PS6-split a B-spline basis has been developed for the $C^1$-smooth quadratics and cubics and $C^2$-smooth quintics \cite{D97,GS2017,Speleers10}. The latter bases have many of the nice B-spline properties, but have to be computed by conversion to Bernstein bases on each subtriangle.

In this paper we systematically enumerate the simplex splines and determine the possible S-bases for the spaces of $C^{d-1}$ splines of degree $d$ on the PS12-split for $d=0,1,2,3$. In the cubic case and for a general triangulation, we argue that these cannot be extended to globally smooth bases. Instead, we envision applications for local constructions, such as hybrid meshes and extra-ordinary points, which are important issues in isogeometric analysis.

\subsection{Main result}\label{sec:mainresult}
For $d = 0,1,2,3$, we consider the space $\SS_d(\PSB)$ of $C^{d-1}$-smooth splines of degree $d$ on the Powell-Sabin 12-split $\PSB$ of a triangle $\PS$ (see definition below). We consider bases $\vs$ of $\SS_d(\PSB)$ satisfying the following properties:
\begin{enumerate}
\item[P1] $\vs$ is invariant under the dihedral symmetry group $\cG$ of the equilateral triangle (cf. \S \ref{sec:symmetries}).
\item[P2] $\vs$ reduces to a shared B-spline basis on the boundary (cf. Remark \ref{rem:P2}).
\item[P3] $\vs$ forms a positive partition of unity and satisfies a Marsden identity, for which the dual polynomials only have real linear factors (cf. \S \ref{sec:Marsden}).
\item[P4] $\vs$ has all its domain points inside $\PS$, with precisely $d+2$ domain points (counting multiplicities) on each edge of $\PS$ (cf. Figure \ref{fig:domain_mesh}).
\item[P5] $\vs$ admits a stable recurrence relation (cf. \S \ref{sec:S-bases}).
\item[P6] $\vs$ admits a differentiation formula (cf. \S \ref{sec:differentiation}).
\item[P7] $\vs$ comes equipped with quasi-interpolants (cf. \S \ref{sec:quasi-interpolant}).
\item[P8] $\vs$ can be smoothly tied together across adjoining triangles using B\'ezier-type conditions (cf. \S \ref{sec:SmoothnessConditions}).
\end{enumerate}
In addition some of the bases $\vs$ satisfy:
\begin{enumerate}
\item[P9] $\vs$ has local linear independence.
\end{enumerate}

We call any basis for $\SS_d(\PSB)$ satisfying P1--P8 an \emph{S-basis}. This space has dimension $n_d$ as in \eqref{eq:nd} and simplex spline bases
\begin{equation}\label{eq:SS-bases}
\vs_d^\rmT = \big[S_{1,d},\ldots,S_{n_d,d}\big],\ d = 0,1,2,3,\qquad \tilde{\vs}_d^\rmT = \big[\tilde{S}_{1,d},\ldots,\tilde{S}_{n_d,d}\big],\ d = 2,3,
\end{equation}
listed in Table \ref{tab:dualpolynomials}. Generalizing a similar result for the bases $\vs_0, \vs_1, \vs_2$ in \cite{Cohen.Lyche.Riesenfeld13}, the main result of the paper is the following.

\begin{theorem}\label{thm:MainTheorem}
The sets $\vs = \vs_d^\rmT, \tilde{\vs}_d^\rmT$ as in \eqref{eq:SS-bases} are the only simplex spline bases for $\SS_d(\PSB)$ satisfying P1--P4. Moreover, these bases also satisfy P5--P8, while only $\vs_0,\vs_1,$ and $\vs_2$ satisfy P9.
\end{theorem}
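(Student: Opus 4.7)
The plan is to split the proof into three parts, matching the three claims of the theorem: (i) enumeration of all simplex spline bases satisfying P1--P4, (ii) verification that these bases also satisfy P5--P8, and (iii) determination of which bases additionally satisfy P9.

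For part (i), I would first observe that P1 forces the collection $\vs$ to decompose into $\cG$-orbits of simplex splines on the PS12-split. Property P4 restricts the admissible knot multisets: each knot must be a vertex of $\PSB$, and the boundary count of $d+2$ domain points per edge, combined with the Marsden duality, constrains the possible orbits. So the first concrete step is to enumerate, orbit by orbit, all simplex splines with $d+3$ knots supported in $\PS$ whose dual polynomial has only real linear factors (required by P3). This is a finite combinatorial search, since the thirteen vertices of the PS12-split organize into a small number of $\cG$-orbits. One then assembles minimal $\cG$-invariant unions of such orbits totalling $n_d$ simplex splines, and for each candidate collection checks that (a) the candidates are linearly independent in $\SS_d(\PSB)$, which by the dimension formula \eqref{eq:nd} upgrades to a basis, and (b) a positive partition of unity, consistent with the shared boundary B-spline basis of P2, can be assembled from the putative Marsden identity. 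The bookkeeping produces precisely the basis lists in Table \ref{tab:dualpolynomials}.

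For part (ii), I would use the general theory of simplex splines. The recurrence relation P5 and the differentiation formula P6 are instances of Micchelli's knot-insertion and derivative identities applied to each $S_{i,d}$ (or $\tilde S_{i,d}$): the task reduces to identifying, orbit by orbit, the lower-degree simplex splines obtained after removing one knot, and checking that these in turn lie in the enumerated S-bases of degrees $d-1$ and $d$. The Marsden identity already built into P3 then yields quasi-interpolants P7 in the standard way, by applying linear functionals constructed from the dual polynomials evaluated at the domain points. Finally, P8 follows by tracing, through Marsden and the control net interpretation of P4, the B\'ezier-type $C^0$, $C^1$, and $C^2$ conditions across an edge shared with a neighbouring macro triangle. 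Each of these verifications is computational and local to a single macro triangle, and I would present them as one short argument per basis.

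For part (iii), local linear independence is checked on each of the twelve subtriangles of $\PSB$. For $d = 0, 1, 2$ this reduces to confirming that the simplex splines in $\vs_d$ whose supports contain a given subtriangle restrict there to a linearly independent set of polynomials, which can be read off from the supports together with the explicit Bernstein--B\'ezier coefficients of each simplex spline. For $d = 3$, I would exhibit one subtriangle on which the cubic simplex splines in $\vs_3$ with that subtriangle in their support admit a nontrivial linear dependence; the same counterexample, transported by the $\cG$-action relating the two cubic bases, then also rules out P9 for $\tilde\vs_3$.

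The main obstacle will be the enumeration step in part (i): making the orbit-by-orbit search exhaustive yet concise, and in particular showing that \emph{no} other $\cG$-invariant collection of simplex splines with the required boundary behaviour fits inside the $n_d$-dimensional space $\SS_d(\PSB)$. Here one has to combine the restriction of $\vs$ to $\partial \PS$, which by P2 pins down the boundary knot multiplicities, with the real-linear-factor hypothesis on the dual polynomials, which eliminates several otherwise admissible orbits. Once these constraints are imposed, the cubic case in particular shrinks to exactly two admissible bases $\vs_3$ and $\tilde\vs_3$, explaining the appearance of both families in \eqref{eq:SS-bases}.
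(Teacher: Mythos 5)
Your outline follows the paper's broad architecture (finite enumeration of admissible simplex splines, verification of P5--P8 by simplex-spline calculus and the Marsden identity, support-counting for P9), but the two places where the real work lies are left with genuine gaps. First, the uniqueness claim -- that no other $\cG$-invariant collection satisfies P1--P4 -- is exactly what you defer as ``the main obstacle'', and the filters you propose do not suffice to close it. After the smoothness and boundary constraints (the paper's Lemma~\ref{thm:KnotMultiplicities} and Theorem~\ref{thm:simplexsplines12-split}, giving Table~\ref{tab:AllSimplexSplines}), several competing $\cG$-orbits survive: for instance, in the cubic case both $\big[Q[\bfp_1^3\bfp_2\bfp_4\bfp_6]\big]_\cG$ and $\big[Q[\bfp_1^3\bfp_2\bfp_3\bfp_4]\big]_\cG$ are $C^2$, reduce to B-splines on the boundary, and have the correct degree, so P1 and P2 alone cannot decide between them. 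The paper eliminates the alternatives (Remarks~\ref{rem:OtherLinear}--\ref{rem:OtherCubic}) by deriving explicit knot-insertion relations between the competing splines and recomputing the Marsden identity of the modified set, which exhibits either negative weights (violating the positive partition of unity in P3), dual polynomials without real linear factors (violating P3), or coinciding domain points counted with multiplicity (violating P4). Your plan to prefilter individual simplex splines by ``whose dual polynomial has only real linear factors'' is not well posed at that stage, since dual polynomials are attributes of a whole basis through its Marsden identity, not of a single simplex spline; the exclusion must be run basis by basis. Two smaller inaccuracies in the same part: the 12-split has ten vertices, not thirteen, and the recursion P5 is not mere knot removal -- the one-knot-removed splines (e.g.\ $Q[\bfp_1\bfp_2^2\bfp_3\bfp_4]$ arising from $S_{3,3}$) are generally not members of $\vs_2$ and must be re-expressed by further knot insertion, after which nonnegativity of the resulting coefficients still has to be checked for stability.

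Second, part (iii) contains a concrete error and an omission. The bases $\vs_3$ and $\tilde\vs_3$ are \emph{not} related by a $\cG$-action; they are related by the support-preserving linear change of basis \eqref{eq:TransformationCubicAlternative} coming from knot insertion, and it is precisely this preservation of supports -- not any symmetry -- that transfers the failure of local linear independence from $\vs_3$ to $\tilde\vs_3$. The failure itself is a counting argument: each of the six outer subtriangles of $\PSB$ meets the supports of $11$ cubic S-splines while $\dim\PP_3=10$, so a nontrivial local dependence is forced. Moreover, the theorem asserts that \emph{only} $\vs_0,\vs_1,\vs_2$ satisfy P9, so you must also show that $\tilde\vs_2$ fails local linear independence; this holds because $\tilde S_{3,2},\tilde S_{7,2},\tilde S_{11,2}$ have full support (unlike the trapezoidal supports of $S_{3,2},S_{7,2},S_{11,2}$), again overloading the outer subtriangles -- but your sketch never addresses the alternative quadratic basis at all.
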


Theorem \ref{thm:MainTheorem} is known \cite{Cohen.Lyche.Riesenfeld13} to hold for the constant, linear, and quadratic bases $\vs_0, \vs_1, \vs_2$. For the remaining bases $\tilde{\vs}_2, \vs_3, \tilde{\vs}_3$, Theorem~\ref{thm:MainTheorem} is established in this paper by showing in the coming sections that Properties P1--P8 hold for these bases only, and that Property P9 does not hold (Remarks \ref{rem:AlternativeQuadratic}, \ref{rem:s3P9}, \ref{rem:AlternativeCubic}). Property~P1 is imposed by only including entire $\cG$-equivalence classes of simplex splines. It ensures that basic properties of the basis are left invariant under affine transformations. Property P2 emerges from the analysis in Section~\ref{sec:simplexenumeration}. Properties P1 and P2 significantly reduce the number of cases to be considered. The Marsden identity of Property P3 is established in Section \ref{sec:Marsden}. It gives rise to Property P4 through its explicit form (Table~\ref{tab:dualpolynomials}) and Property P7 through an explicit quasi-interpolant (Theorem~\ref{thm:QI}). Properties P5 and P6 follow from basic properties of simplex splines. Properties P2--P4 allow to establish a B\'ezier-like control net, which, together with Property P6, yields the B\'ezier-type smoothness conditions of Property P8. Remarks \ref{rem:OtherLinear}--\ref{rem:OtherCubic} explain why there are no other bases with these properties. 

Supplementary computational results are presented in a Jupyter notebook \cite{WebsiteGeorg}.

\begin{figure}
\centerline{
\includegraphics[scale=0.8]{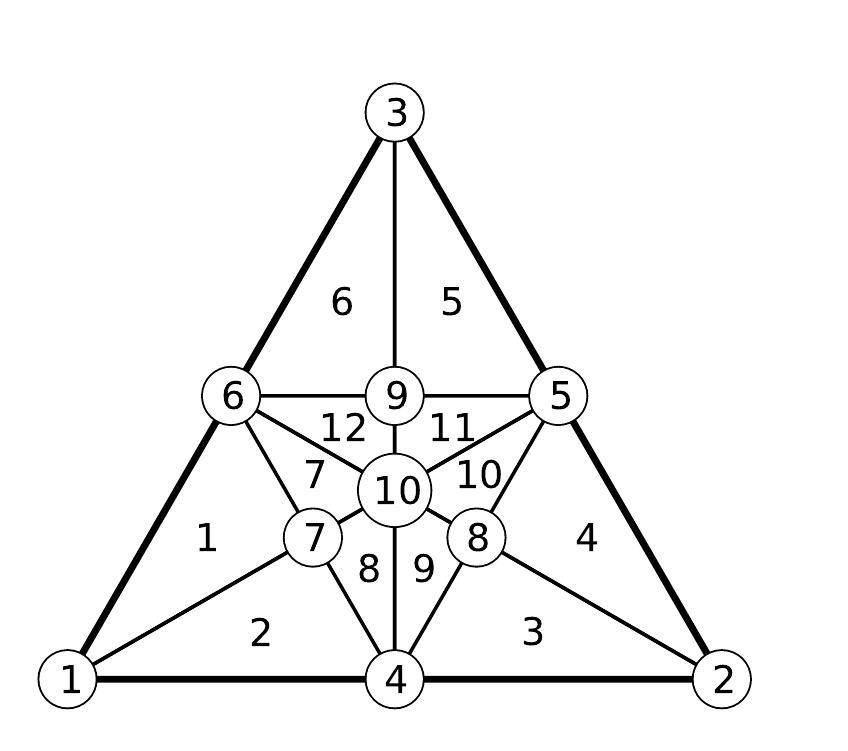}\qquad
\includegraphics[scale=0.8]{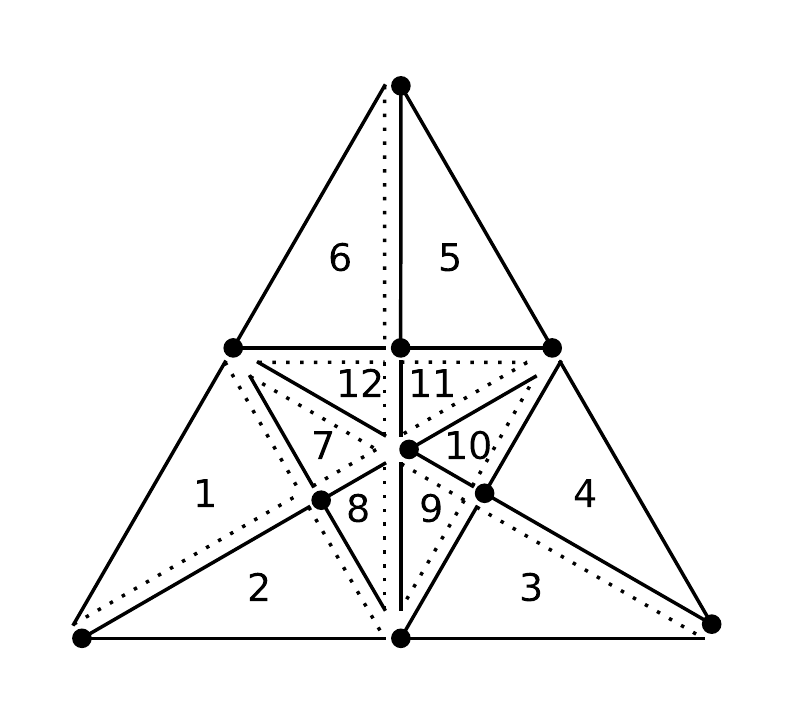}}
\caption{The Powell-Sabin 12-split with numbering of vertices and subtriangles (left), and a scheme assigning every point in the triangle to a unique subtriangle of the 12-split (right).}
\label{fig:ps12-numbering}
\end{figure}

\subsection{Basic tools}\label{sec:basictools}
We recall some basic tools used throughout the paper.

\subsubsection{Conventions}\label{sec:notation}
We use small Greek letters (e.g.~$\alpha, \beta$) for scalar values, small boldface letters (e.g.~$\vs$) to denote vectors, capital boldface letters (e.g. $\mR, \mT, \mU$) for matrices. Scalar-valued univariate functions are denoted by small letters, scalar-valued multivariate functions are denoted by capital letters (e.g.~$S, M, Q$), while vector-valued multivariate are, like matrices, denoted by capital boldface letters. Calligraphic fonts (e.g. $\cK$) are reserved for (multi)sets, expressed as
\[ \cK = \{\underbrace{\bfk_1,\ldots,\bfk_1}_{\mu_1},\ldots, \underbrace{\bfk_s,\ldots,\bfk_s}_{\mu_s}\} = \{\bfk_1^{\mu_1}\cdots \bfk_s^{\mu_s}\}, \]
with $\mu_i$ the \emph{multiplicity} of $\bfk_i$. The \emph{size} $|\cK|$ of $\cK$ is its number of elements counting multiplicities, i.e., $|\cK| = \mu_1 + \cdots + \mu_s$. Generalizing the notation for closed and half-open intervals, we write $[\cK]$ for the convex hull of $\cK$. Whenever $\cK$ consists of vertices of $\PSB$, we write $[\cK)$ for the half-open convex hull of $\cK$ obtained as union of the half-open subtriangles shown in Figure~\ref{fig:ps12-numbering} (right).

Blackboard bold (e.g. $\PP_d, \SS_d$) is used to denote function spaces. In particular, identifying matrices with linear maps, the symbol $\RR^{m,n}$ denotes the space of $m\times n$ real matrices. We identify $\RR^m$ with $\RR^{m,1}$ (column vectors), and denote the standard basis vectors in $\RR^m$ by $\ve_1,\ldots,\ve_m$.

For an $m\times n$ matrix $\mA$ and $\bfi = [i_1,\ldots,i_r]^\rmT$, $\bfj = [j_1,\ldots,j_s]^\rmT$ with $1\leq i_1 < \cdots < i_r \leq m, 1\leq j_1 \leq \cdots \leq j_s \leq n$, then $\mA(\bfi,\bfj)$ is the $r\times s$ matrix whose $(k,\ell)$ element is $a_{i_k,j_\ell}$. In particular, $\vc(\bfi)$ denotes the vector whose $j$th element is $c_{i_j}$. 

The support of a function $F$, denoted by $\supp(F)$, is the closure of the set of values in the domain of $F$ at which $F$ is nonzero. Empty products are assumed to be 1. For any set $\cA$, the indicator function on $\cA$ is denoted by $\bfone_\cA$.

\subsubsection{The Powell-Sabin 12-split}
Consider the triangle $\PS$ with vertices $\vp_1,$ $\vp_2,$ $\vp_3 \in \RR^2$ and midpoints
\begin{equation}\label{eq:p456}
\vp_4:= \frac{\vp_1+\vp_2}{2},\qquad \vp_5:=\frac{\vp_2+\vp_3}{2},\qquad \vp_6:=\frac{\vp_3+\vp_1}{2}.
\end{equation}
Taking the complete graph on these six points, one obtains additional points
\begin{equation}\label{eq:p78910}
\vp_7 := \frac{\vp_4+\vp_6}{2},\qquad \vp_8:= \frac{\vp_4+\vp_5}{2},\qquad \vp_9:= \frac{\vp_5+\vp_6}{2},\qquad \vp_{10}:= \frac{\vp_1+\vp_2+\vp_3}{3}
\end{equation}
and subtriangles $\PS_1,\ldots,\PS_{12}$ as in Figure~\ref{fig:ps12-numbering}. The resulting split is called the \emph{Powell-Sabin 12-split} $\PSB$ of $\PS$.

\subsubsection{Barycentric and directional coordinates}
The \emph{barycentric coordinates} $\vbeta = (\beta_1,\beta_2,\beta_3)$ of a point $\bfx\in \RR^2$ with respect to the triangle $\PS = [\bfp_1,\bfp_2,\bfp_3]$ is the unique solution to
\begin{equation}\label{eq:BarycentricCoordinates}
\vx = \beta_1\vp_1 + \beta_2\vp_2 + \beta_3\vp_3,\qquad 1 = \beta_1 + \beta_2 + \beta_3.
\end{equation}
Similarly, we write $\vbeta^{i,j,k} = (\beta_1^{i,j,k}, \beta_2^{i,j,k}, \beta_3^{i,j,k})$ for the barycentric coordinates of $\bfx$ with respect to the triangle $[\bfp_i, \bfp_j, \bfp_k] \subset\PS$. To save space in the recursion and differentiation matrices, we use the short-hands
\begin{equation}\label{eq:gammabetasigma}
\gamma_j:=2\beta_j-1,\quad \beta_{i,j}=\beta_i-\beta_j,
\quad \sigma_{i,j}=\beta_i+\beta_j,\qquad \text{ for $i,j=1,2,3$}.
\end{equation}
Note that
\begin{equation}
\begin{aligned}\label{eq:positivegamma}
\gamma_j & \geq 0\ \text{at}\ \PS_i,\qquad i = 2j-1, 2j, \\
\gamma_j & \leq 0\ \text{at}\ \PS_i,\qquad i \neq 2j-1, 2j.
\end{aligned}
\end{equation}

For any $\vu=[u_1,u_2]^\rmT\in\RR^2$, consider the corresponding directional derivative $D_\vu:=\vu\cdot\nabla=u_1\frac{\partial}{\partial x_1} + u_2\frac{\partial}{\partial x_2}$. The unique solution $\valpha:=[\alpha_1,\alpha_2,\alpha_3]^\rmT$ of
\begin{equation}\label{eq:dircoef}
\vu = \alpha_1\vp_1+\alpha_2\vp_2+\alpha_3\vp_3,\qquad 0 = \alpha_1+\alpha_2+\alpha_3,
\end{equation}
is called the \emph{directional coordinates} of $\vu$. If $\bfu=\bfq^1-\bfq^2$, with $\bfq^1, \bfq^2\in\RR^2$, then $\alpha_j:=\beta^1_j - \beta^2_j$, $j=1,2,3$, where $\vbeta^i:=[\beta^i_1,\beta^i_2,\beta^i_3]^\rmT$ is the vector of barycentric coordinates of $\vq^i$, $i=1,2$.

\subsubsection{Function spaces}
Let $\PP_d (\RR^2)$ denote the space of bivariate polynomials of total degree at most $d$ and with real coefficients, which has dimension $\nu_d := (d+1)(d+2)/2$.
On a triangle $\PS$ with barycentric coordinates $\beta_1,\beta_2,\beta_3$, a convenient basis of $\PP_d$ is formed by the \emph{Bernstein polynomials}
\begin{equation}\label{eq:Bernstein}
B^d_{i_1,i_2,i_3} := \frac{d!}{i_1!i_2!i_3!} \beta_1^{i_1} \beta_2^{i_2} \beta_3^{i_3}, \qquad i_1 + i_2 + i_3 = d.
\end{equation}

Analogously, on the 12-split $\PSB$ of a triangle $\PS$, we consider the spaces
\begin{equation}
\SS_d (\PSB) := \{f\in C^{d-1}(\PS) : f|_{\PSsmall_k} \in \PP_d,\ k = 1,\ldots,12 \}, \quad d=0,1,2,\ldots
\end{equation}
The dimension $n_d$ of this space is \cite[Theorem 3]{Lyche.Muntingh16}
\begin{equation}\label{eq:nd}
(n_0, n_1, n_2, n_3,\ldots) = (12, 10, 12, 16,\ldots),\qquad n_d = \frac12 d^2 + \frac32 d + 7,\qquad d\geq 2.
\end{equation}
For $d = 0,1,2$, we equip these spaces with the S-bases $\vs^\rmT_d = [S_{j,d}]_{j=1}^{n_d}$ presented in \cite{Cohen.Lyche.Riesenfeld13}. 

Each piecewise polynomial on $\PSB$ can be represented as an element of the $\PP_d$-module $\PP_d^{12}$, i.e., as a vector with components the polynomial pieces on the faces $\PS_k$ of $\PSB$.

\section{Simplex splines}\label{sec:SimplexSplines}
In this section we recall the definition and some basic properties of simplex splines, and determine a list of all simplex splines in $\SS_d(\PSB)$ for $d=0,1,2,3$.
\subsection{Definition and properties}\label{sec:SimplexSplinesDefProp}
First we provide the definition of simplex spline convenient for our purposes, and recall properties necessary for the remainder of the paper.

\subsubsection{Geometric construction}
Let $\bfk_1,\ldots,\bfk_{d+3} \subset \RR^2$ be a sequence of points in the plane, called \emph{knots}, defining a multiset $\cK$. Let $\sigma = \big[\overline\bfk_1,\ldots,\overline\bfk_{d+3}\big] \subset \RR^{d+2}$ be a simplex whose projection $\mP: \RR^{d+2} \longrightarrow \RR^2$ onto the first two coordinates satisfies $\mP(\overline\bfk_i) = \bfk_i$, for $i = 1, \ldots, d+3$. For any integer $k\geq 1$, let $\vol_k$ denote the $k$-dimensional volume. For $k=2$ we simply write $\area := \vol_2$, to be understood in the usual sense. We define the \emph{integral normalized simplex spline}
\[ M[\cK]: \RR^2\longrightarrow \RR,\qquad M[\cK](\bfx) := \frac{\vol_d \big(\sigma\cap \mP^{-1}(\bfx)\big)}{\vol_{d+2}(\sigma)}. \]
This is well defined, independently of the choice of the simplex $\sigma$ \cite[\S 18.3]{Prautzsch.Boehm.Paluszny02}.

We will restrict ourselves to simplex splines on the 12-split $\PSB$ of a triangle $\PS$, in which case $\cK = \{\bfp_1^{\mu_1} \cdots \bfp_{10}^{\mu_{10}}\}$.  While $M[\cK]$ is the simplex spline most commonly encountered in the literature, our discussion is simpler in terms of the \emph{(area normalized) simplex spline}, defined as 
\[ Q[\cK] := \area(\PS) \cdot {|\cK| - 1 \choose 2}^{-1} M[\cK]. \]
Whenever $\mu_7 = \mu_8 = \mu_9 = 0$, we use the graphical notation
\[ \SimSgeneric := Q[\bfp_1^i \bfp_2^j \bfp_3^k \bfp_4^l \bfp_5^m \bfp_6^n \bfp_{10}^o]. \]

\begin{figure}
\centering{
\includegraphics[scale=0.2]{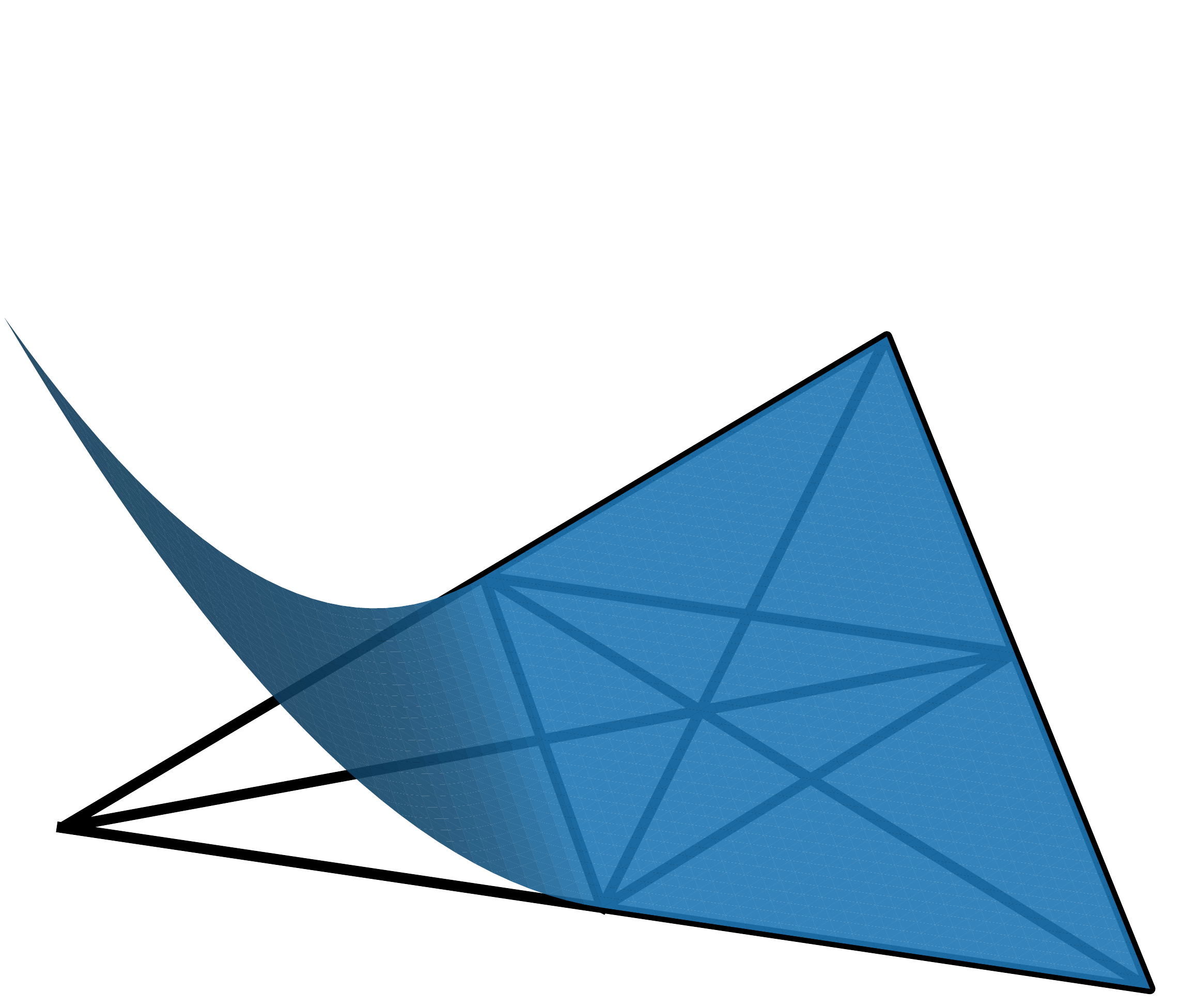}\quad
\includegraphics[scale=0.2]{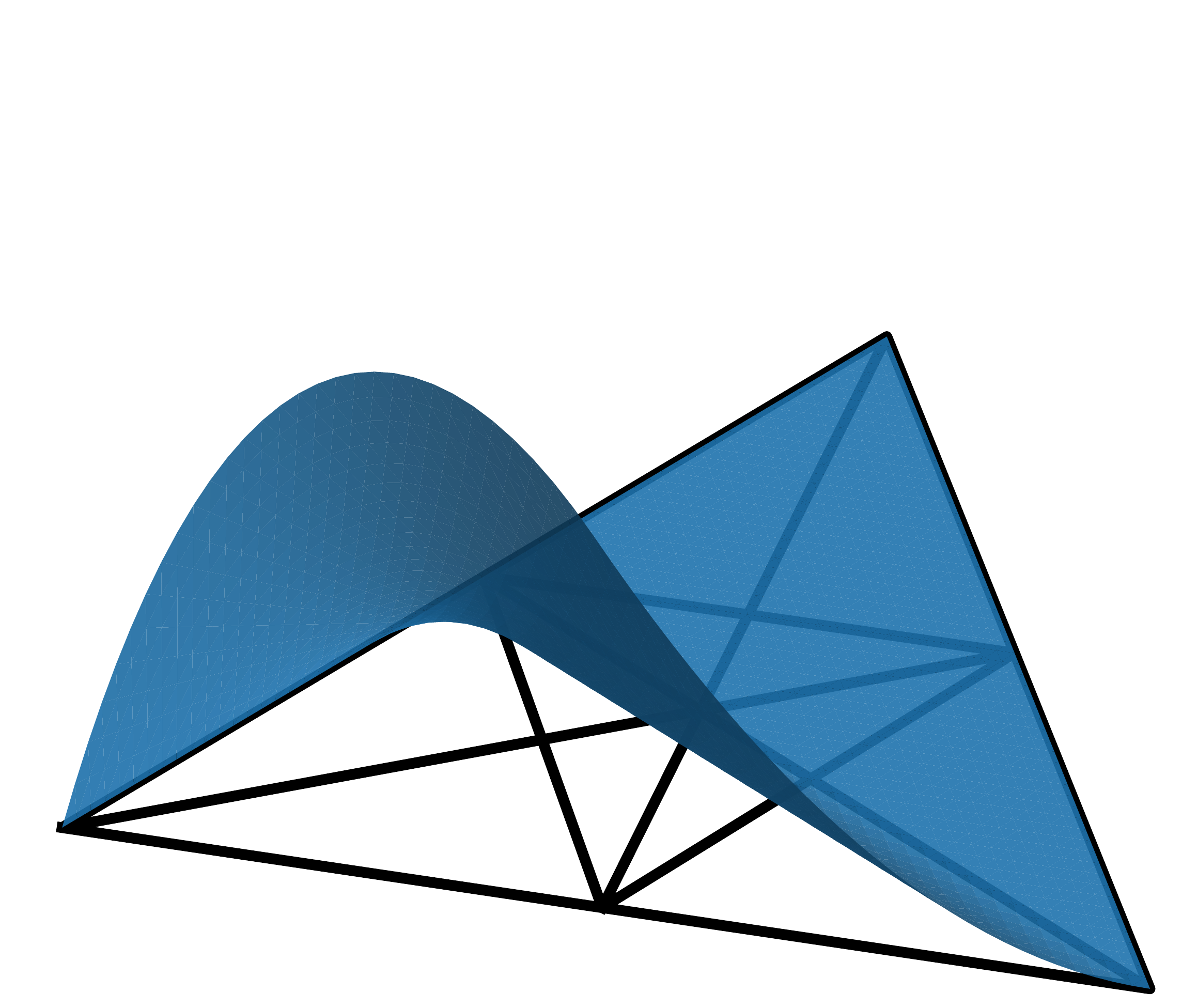}\quad
\includegraphics[scale=0.2]{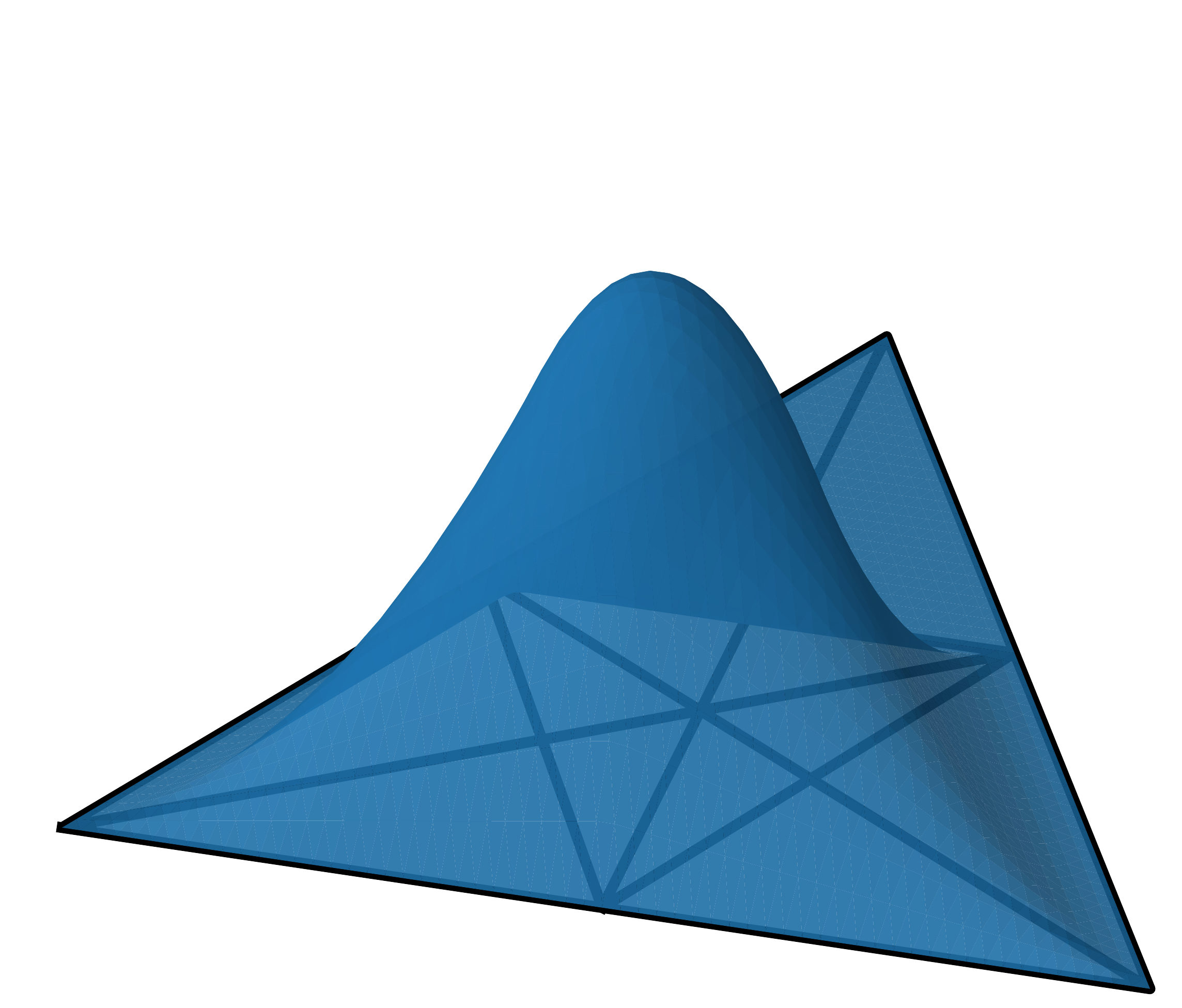}}
\caption{From left to right, the simplex splines $\frac14 Q[\bfp_1^3\bfp_4\bfp_6]$, $\frac12 Q[\bfp_1^2\bfp_2\bfp_4\bfp_6]$, and $\frac34 Q[\bfp_1\bfp_2\bfp_4\bfp_5\bfp_6]$ in $\vs_2$.}
\label{fig:SimplexSplineDef6}
\end{figure}

\subsubsection{Piecewise polynomial structure} $Q[\cK]$ is a piecewise polynomial on the convex hull $[\cK]$ of $\cK$, with knot lines formed by the complete graph of $\cK$ \cite[\S 18.5]{Prautzsch.Boehm.Paluszny02}; see Figure \ref{fig:SimplexSplineDef6}.
\subsubsection{Degree} Each polynomial piece of $Q[\cK]$ has total degree bounded as \cite[\S 18.5]{Prautzsch.Boehm.Paluszny02}
\begin{equation}\label{eq:MultiplicitySumGeneral}
\deg\,Q[\cK] \leq d := |\cK| - 3.
\end{equation}
\subsubsection{Smoothness} The smoothness across a knot line can be controlled locally. More precisely,
for any $\bfx\in \PS$, let $\mu$ be the maximum number of knots of $\cK$ (counting multiplicities), at least two of which are distinct, along any affine line containing $\bfx$. Then $Q[\cK]$ will have continuous derivatives up to order $d+1-\mu$ at $\bfx$ \cite[\S 18.6]{Prautzsch.Boehm.Paluszny02}, which we will express with the notation
\begin{equation}\label{eq:InteriorSmoothnessGeneral}
Q[\cK] \in C^{d+1-\mu}\ \text{at}\ \bfx.
\end{equation}
For example, if $Q[\cK]$ is a $C^{d-1}$-smooth simplex spline of degree $d$, then any line segment in $\PS$ can contain at most two distinct knots.

\subsubsection{Recursion} For $d\ge 1$, the simplex spline can be expressed in terms of simplex splines of lower degree \cite[\S 18.5]{Prautzsch.Boehm.Paluszny02},
\begin{equation}\label{eq:Qrec}
Q[\cK](\bfx)=\sum_{j=1}^{d+3} \beta_j Q[\cK\setminus \bfk_j](\bfx), \qquad \sum_j \beta_j = 1, \qquad \sum_j \beta_j\bfk_j=\bfx \in \RR^2.
\end{equation}
For simplex splines with knot multiset $\cK = \{\bfp_1^{\mu_1}\cdots \bfp_{10}^{\mu_{10}}\} \subset \RR^2$ composed of vertices of $\PSB$, we can therefore equivalently define $Q[\cK]$ recursively by
\begin{equation}\label{eq:Qrec2}
Q[\cK](\bfx) := \left\{
\begin{array}{cl}
0 & \text{if }\area([\cK]) = 0,\\
\bfone_{[\cK)} (\bfx) \frac{\area(\PSsmall)}{\area([\cK])} & \text{if } \area([\cK]) \neq 0 \text{ and } |\cK| = 3,\\
\sum_{j=1}^{10} \beta_j Q[\cK \backslash \bfk_j] (\bfx) & \text{if } \area([\cK]) \neq 0 \text{ and } |\cK| > 3,
\end{array} \right.
\end{equation}
with $\bfx = \beta_1 \bfp_1 + \cdots + \beta_{10}\bfp_{10}$, $\beta_1 + \cdots + \beta_{10} = 1$, and $\beta_i = 0$ whenever $\mu_i = 0$. For instance, with $\beta_1,\beta_2,\beta_3$ the barycentric coordinates of $\PS$,
\[ \SimS{211000} = \beta_1\cdot \bfone_{\PSsmall} + \beta_2 \cdot 0 + \beta_3 \cdot 0 = \beta_1 \cdot \bfone_{\PSsmall}. \]

\subsubsection{Differentiation}
When it is defined, the directional derivative of the simplex spline of degree $d$ can be expressed in terms of simplex splines of lower degree \cite[\S 18.6]{Prautzsch.Boehm.Paluszny02}, 
\begin{equation}\label{eq:Qdiff}
D_\bfu Q[\cK] = d\sum_{j=1}^{d+3} \alpha_j Q[\cK\setminus \bfk_j],\ \qquad \sum_j \alpha_j=0,\qquad \sum_j \alpha_j\bfk_j=\bfu \in \RR^2.
\end{equation}
For instance, with $\alpha_1,\alpha_2,\alpha_3$ directional coordinates of $\bfu$ with respect to the triangle $\PS$,
\[ \frac13 D_\bfu \SimS{211101} = \alpha_1 \SimS{111101} + \alpha_2 \SimS{201101} + \alpha_3 \SimS{210101}.
\]

\subsubsection{Knot insertion}
The simplex spline admits the knot insertion formula \cite[\S 18.4]{Prautzsch.Boehm.Paluszny02}
\begin{equation}\label{eq:Qins}
Q[\cK]=\sum_{j=1}^{d+3} c_j Q[(\cK\sqcup \bfy)\setminus \bfk_j],\qquad \sum_j c_j=1,\qquad \sum_j c_j\bfk_j=\bfy\in \RR^2.
\end{equation}
For instance, repeatedly applying knot insertion at the midpoints $\bfp_k = c_i \bfp_i + c_j \bfp_j$, $c_i = c_j = \frac12$, at the cost of the end points $\bfp_i, \bfp_j \in \{\bfp_1, \bfp_2, \bfp_3\}$,
\begin{align}\label{eq:222000to111111}
\SimS{222000} &= \frac12 \SimS{122001} + \frac12 \SimS{221001} 
= \frac14 \SimS{121011} + \frac14 \SimS{112011} + \frac14 \SimS{121101} + \frac14 \SimS{211101} \\ \notag
& = \frac14 \SimS{211101} + \frac14 \SimS{112011}
+ \frac18 \SimS{111111} + \frac18 \SimS{111111} + \frac14 \left[ \frac12 \SimS{021111} + \frac12 \SimS{120111}\right] \\ \notag
& = \frac14 \SimS{211101} + \frac14 \SimS{112011}
+ \frac14 \SimS{111111} + \frac14 \SimS{121110}.
\end{align}

\subsubsection{Symmetries}\label{sec:symmetries}
\noindent The dihedral group $\cG$ of the equilateral triangle consists of the identity, two rotations and three reflections, i.e.,
\begin{center}
\includegraphics[scale=0.6]{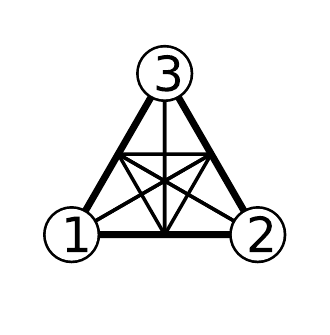}\quad
\includegraphics[scale=0.6]{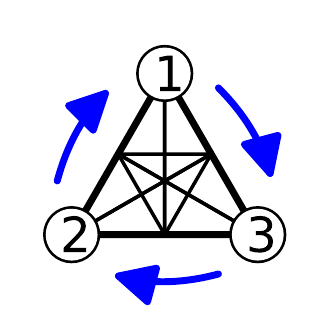}\quad
\includegraphics[scale=0.6]{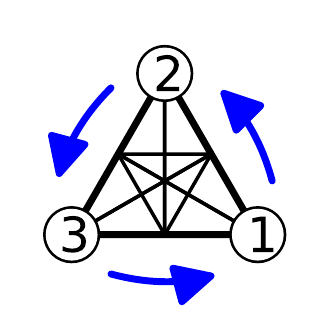}\quad
\includegraphics[scale=0.6]{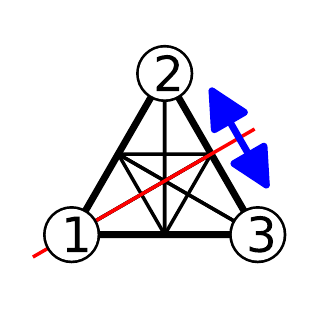}\quad
\includegraphics[scale=0.6]{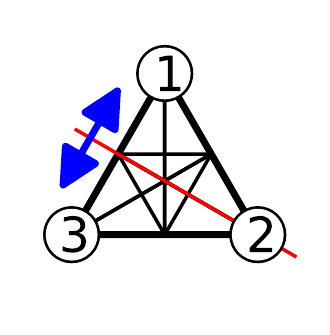}\quad
\includegraphics[scale=0.6]{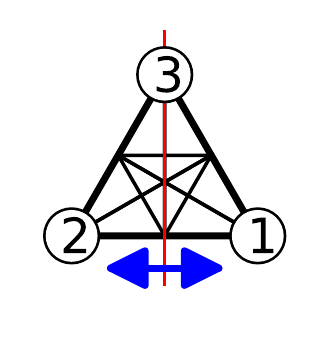}\quad
\end{center}
The affine bijection sending $\bfp_k$ to $(\cos\,2\pi k/3, \sin\,2\pi k/3)$, for $k = 1, 2, 3$, maps $\PSB$ to the 12-split of an equilateral triangle. Through this correspondence, the dihedral group permutes the vertices $\bfp_1,\ldots,\bfp_{10}$ of $\PSB$. Every such permutation $\sigma$ induces a bijection $Q[\bfp_1^{\mu_1}\cdots\bfp_{10}^{\mu_{10}}] \longmapsto Q[\sigma(\bfp_1)^{\mu_1}\cdots $ $\sigma(\bfp_{10})^{\mu_{10}}]$ on the set of all simplex splines on $\PSB$. For any set $\vs$ of simplex splines, we write
\[ [\vs]_\cG := \{Q[\sigma(\cK)]\,:\, Q[\cK]\in \vs,\ \sigma\in \cG\} \]
for the \emph{$\cG$-equivalence class of $\vs$}, i.e., the set of simplex splines related to $\vs$ by a symmetry in $\cG$. In particular, the bases in \eqref{eq:SS-bases} shown in Table \ref{tab:dualpolynomials} take the compact form
\begin{align*}
\vs_0 & = \left[\frac18\SimSconstant{1}, \frac{1}{24}\SimSconstant{7}\right]_\cG,\\
\vs_1 & = \left[\frac14 \SimS{200101}, \frac13\SimS{1101000001}, \frac13\SimS{1001010001}, \frac14\SimS{0001110001} \right]_\cG,\\
\vs_2 & = \left[\frac14 \SimS{300101}, \frac12 \SimS{210101}, \frac34 \SimS{110111}\right]_\cG,\\
\tilde{\vs}_2 &= \left[\frac14 \SimS{300101}, \frac12 \SimS{210101}, \frac34 \SimS{111101}\right]_\cG,\\
\vs_3 & = \left[\frac14 \SimS{400101}, \frac12 \SimS{310101}, \SimS{221100}, \SimS{211101}, \frac14\SimS{111111}\right]_\cG,\\
\tilde{\vs}_3 &= \left[\frac14 \SimS{400101}, \frac12 \SimS{310101}, \SimS{221100}, \frac34 \SimS{211101}, \SimS{222000}\right]_\cG.
\end{align*}
We say that $\vs$ is \emph{$\cG$-invariant} whenever $[\vs]_\cG = \vs$ (Property P1). One sees immediately that this is the case for the bases in \eqref{eq:SS-bases}. 

\subsubsection{Restriction to an edge}\label{sec:restriction}
Let $e = [\bfp_i, \bfp_k]$ be an edge of $\PS$ with midpoint $\bfp_j$ and let $\varphi_{ik}(t) := (1-t)\bfp_i + t\bfp_k$. By induction on $|\cK|$,
\begin{equation}\label{eq:restriction}
Q[\cK] \circ \varphi_{ik}(t) =
\left\{ \begin{array}{cl}
                        0 & \text{if }\mu_i + \mu_j + \mu_k < |\cK| - 1,\\
\frac{\area(\PSsmall)}{\area([\cK])}B(t)  & \text{if }\mu_i + \mu_j + \mu_k = |\cK| - 1,
\end{array} \right.
\end{equation}
where $B$ is the univariate B-spline with knot multiset $\{0^{\mu_i}\,0.5^{\mu_j}\,1^{\mu_k}\}$.

We say that $Q[\cK]$ \emph{reduces to a B-spline on the boundary} when $B$ is one of the consecutive univariate B-splines $B_1^d, \ldots, B_{d+2}^d$ on the open knot multiset $\{0^{d+1}\, 0.5^1\, 1^{d+1}\}$, i.e.,
\begin{equation}\label{eq:UnivariateB-Splines}
B_1^d := B[0^{d+1}\, 0.5^1],\qquad B_2^d := B[0^d \, 0.5^1\, 1^1],\qquad \ldots, \qquad B_{d+2}^d := B[0.5^1\, 1^{d+1}].
\end{equation}
Similarly a basis $\{S_1, \ldots, S_{n_d}\}$ of $\SS_d(\PSB)$ \emph{reduces to a B-spline basis on the boundary} (Property P2) when, for $1\leq i < k\leq 3$, as multisets,
\[ \big\{S_1 \circ \varphi_{ik}, \ldots, S_{n_d} \circ \varphi_{ik}\big\} = \{ \big(B^d_1 \big)^1\, \cdots\, \big(B^d_{d+2}\big)^1\, 0^{n_d - d - 2}\}.\]
One sees that this is the case for the bases in \eqref{eq:SS-bases}. 

\begin{remark}\label{rem:P2}
Property P2 should be interpreted as follows: It requires that after restricting a bivariate basis to any edge of any triangle using the above reparametrization to $[0,1]$, one ends up with the same univariate basis. For instance, for the $C^2$ quintics on the 12-split \cite{Lyche.Muntingh16} this is the B-spline basis on the open knot multiset $\{0^6\, 0.5^2\, 1^6\}$, while for the $C^1$ cubics on the Clough-Tocher split \cite{Lyche.Merrien18} this is the cubic Bernstein basis (i.e., the B-splines on the open knot multiset $\{0^4\, 1^4\}$).
\end{remark}

\subsection{Enumeration on the 12-split}\label{sec:simplexenumeration}
Any simplex spline $Q[\cK]$ on $\PSB$ is specified by a multiset $\cK = \{\bfp_1^{\mu_1}\cdots\bfp_{10}^{\mu_{10}}\}$. Let us see how various properties of $Q[\cK]$ translate into conditions on the knot multiplicities.

Certain segments, like $[\bfp_1, \bfp_8]$, do not appear as edges in the 12-split, meaning that $C^{\infty}$-smoothness is required across these segments. Hence, by \eqref{eq:InteriorSmoothnessGeneral},
\begin{equation}\label{eq:WrongKnotLines}
\begin{aligned}
& \mu_1 \mu_8 = \mu_1 \mu_9 = \mu_8 \mu_9 = 0,\\
& \mu_2 \mu_7 = \mu_2 \mu_9 = \mu_7 \mu_9 = 0,\\
& \mu_3 \mu_7 = \mu_3 \mu_8 = \mu_7 \mu_8 = 0.
\end{aligned}
\end{equation}
If $Q[\cK]$ has degree $d$, then, by \eqref{eq:MultiplicitySumGeneral},
\begin{equation}\label{eq:MultiplicitySum0}
\mu_1 + \mu_2 + \cdots + \mu_{10} = d + 3.
\end{equation}
To achieve $C^r$-smoothness across the knotlines in $\PSB$, necessarily 
\begin{equation}
\begin{aligned}
 & \mu_1 + \mu_5 + \mu_7 + \mu_{10} \leq d + 1 - r,\qquad \mu_4 + \mu_6 + \mu_7 \leq d + 1 - r,\label{eq:InteriorSmoothness0}\\
 & \mu_2 + \mu_6 + \mu_8 + \mu_{10} \leq d + 1 - r,\qquad \mu_4 + \mu_5 + \mu_8 \leq d + 1 - r,\\
 & \mu_3 + \mu_4 + \mu_9 + \mu_{10} \leq d + 1 - r,\qquad \mu_5 + \mu_6 + \mu_9 \leq d + 1 - r,
\end{aligned}
\end{equation}
whenever two of the multiplicities are nonzero. 

\begin{lemma}\label{thm:KnotMultiplicities}
Suppose $Q[\bfp_1^{\mu_1}\cdots\bfp_{10}^{\mu_{10}}]$ is a $C^r$-smooth simplex spline on $\PSB$ of degree $d$. If $d \leq 2r + 1$, then
\begin{equation}\label{eq:m789}
\mu_7 = \mu_8 = \mu_9 = 0.
\end{equation}
If $d \leq \left\lceil\frac32 r\right\rceil$, then
\begin{equation}\label{eq:m10}
 \mu_{10} = 0, 
\end{equation}
\begin{equation}\label{eq:InteriorSmoothness}
\mu_1 + \mu_5,\ \mu_2 + \mu_6,\ \mu_3 + \mu_4,\ \mu_4 + \mu_6,\ \mu_4 + \mu_5,\ \mu_5 + \mu_6\leq d+1-r,
\end{equation}
(whenever both multiplicities are nonzero),
\begin{equation}\label{eq:m456}
\mu_4 + \mu_5 + \mu_6 \leq \frac32 (d + 1 - r),
\end{equation}
\begin{equation}\label{eq:m123}
\mu_1 + \mu_2 + \mu_3 \geq \frac12 (3r + 3 - d),
\end{equation}
\end{lemma}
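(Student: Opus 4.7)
The plan is to derive each bound from three ingredients already in place: the smoothness inequalities \eqref{eq:InteriorSmoothness0}, the knot-count identity \eqref{eq:MultiplicitySum0}, and the forbidden-pair conditions \eqref{eq:WrongKnotLines}, supplemented by the non-degeneracy requirement that $[\cK]$ have positive area (i.e., the supporting knots are not collinear; otherwise $Q[\cK]$ is identically zero). In each case the argument amounts to summing a judiciously chosen subset of the inequalities in \eqref{eq:InteriorSmoothness0} and then comparing with \eqref{eq:MultiplicitySum0}.

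For \eqref{eq:m789}, by $\cG$-symmetry it suffices to bound $\mu_7$. Assuming $\mu_7 \geq 1$, the conditions \eqref{eq:WrongKnotLines} immediately force $\mu_2 = \mu_3 = \mu_8 = \mu_9 = 0$, so every remaining knot lies on the union of the median $L_1 = [\bfp_1, \bfp_7, \bfp_{10}, \bfp_5]$ and the inner-triangle edge $L_2 = [\bfp_4, \bfp_7, \bfp_6]$, two lines meeting at $\bfp_7$. Non-degeneracy forces at least one knot on each of $L_1\setminus\{\bfp_7\}$ and $L_2\setminus\{\bfp_7\}$, so the two corresponding inequalities in \eqref{eq:InteriorSmoothness0} both apply; adding them and using \eqref{eq:MultiplicitySum0} yields $\mu_7 \leq d - 2r - 1$, which contradicts $\mu_7 \geq 1$ whenever $d \leq 2r + 1$.

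Since $\lceil 3r/2\rceil \leq 2r+1$, the previous step already supplies $\mu_7 = \mu_8 = \mu_9 = 0$ under the stronger hypothesis. For \eqref{eq:m10}, I would assume $\mu_{10}\geq 1$ and case-split on how many of the three medians through $\bfp_{10}$ carry additional knots. The one-median case is degenerate, and the two-median case is killed by the same summation argument as for $\mu_7$, yielding $\mu_{10} \leq d - 2r - 1 \leq 0$. In the three-median case, summing the three median inequalities of \eqref{eq:InteriorSmoothness0} with \eqref{eq:MultiplicitySum0} produces $2\mu_{10} \leq 2d - 3r$; the hypothesis $d \leq \lceil 3r/2\rceil$ makes the right-hand side at most $1$, and integrality of $\mu_{10}$ then forces $\mu_{10}=0$.

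With $\mu_7=\mu_8=\mu_9=\mu_{10}=0$ in hand, the six bounds \eqref{eq:InteriorSmoothness} are direct specializations of \eqref{eq:InteriorSmoothness0}. For \eqref{eq:m456}, I would case-split on how many of $\mu_4, \mu_5, \mu_6$ are positive: if all three are, summing the three inner-triangle inequalities yields $2(\mu_4+\mu_5+\mu_6) \leq 3(d+1-r)$ directly; if only one or two are positive, non-degeneracy forces some $\mu_j$ with $j\in\{1,2,3\}$ to be positive, and a single pair inequality from \eqref{eq:InteriorSmoothness} bounds the sum by $d+1-r \leq \tfrac{3}{2}(d+1-r)$. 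The bound \eqref{eq:m123} is then a one-line consequence of \eqref{eq:m456} and \eqref{eq:MultiplicitySum0}. The main obstacle throughout is the careful bookkeeping needed to verify, in each subcase, that the inequality from \eqref{eq:InteriorSmoothness0} one wishes to use genuinely applies; the $\cG$-symmetry substantially reduces the number of subcases to check.
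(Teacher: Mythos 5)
Your proposal is correct and follows essentially the same route as the paper: assume $\mu_7\geq 1$, use \eqref{eq:WrongKnotLines} to kill $\mu_2,\mu_3,\mu_8,\mu_9$, add the first row of \eqref{eq:InteriorSmoothness0} against \eqref{eq:MultiplicitySum0} to get $\mu_7\leq d-1-2r$; sum the median (first-column) inequalities for $2\mu_{10}\leq 2d-3r$ plus integrality; specialize \eqref{eq:InteriorSmoothness0} for \eqref{eq:InteriorSmoothness}; sum the inner-edge (second-column) inequalities for \eqref{eq:m456}; and deduce \eqref{eq:m123} from \eqref{eq:MultiplicitySum0}. Your extra case analysis verifying (via non-degeneracy of the support) that the inequalities of \eqref{eq:InteriorSmoothness0} actually apply is a careful touch the paper leaves implicit, but it does not change the argument.
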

\begin{proof}
Suppose $\mu_7\geq 1$. Then, by \eqref{eq:WrongKnotLines}, $\mu_2 = \mu_3 = \mu_8 = \mu_9 = 0$. Adding the first row in \eqref{eq:InteriorSmoothness0} and subtracting \eqref{eq:MultiplicitySum0}, yields $\mu_7 \leq d - 1 - 2r$. This is a contradiction whenever $d \leq 2r + 1$, establishing the first statement of the theorem.

Next assume $d \leq \left\lceil\frac32 r\right\rceil$. Adding the first column in \eqref{eq:InteriorSmoothness0} and using \eqref{eq:MultiplicitySum0}, yields $d + 3 + 2 \mu_{10} \leq 3d + 3 - 3r$. Solving for $\mu_{10}$, we obtain the second statement of the theorem.

The third statement follows immediately from the first two and \eqref{eq:InteriorSmoothness0}. Moreover, adding the inequalities in the second column in \eqref{eq:InteriorSmoothness0}, dividing by two, and using \eqref{eq:MultiplicitySum0}, one obtains the fourth statement. Finally, together with \eqref{eq:MultiplicitySum0}, we obtain the fifth statement.
\end{proof}

Next we determine the $C^{d - 1}$-smooth simplex splines of degree $d$ on $\PSB$ for $d = 0,1,2,3$. Selecting those that reduce to either zero or a B-spline on the boundary, we arrive at Table \ref{tab:AllSimplexSplines}.

\subsubsection{The case $d=0$ and $r=-1$}
The $C^{-1}$-smooth constant simplex splines $Q[\cK]$ on $\PSB$ have $|\cK| = 3$, corresponding to triples of knots not lying on a line.

\subsubsection{The case $d=1$ and $r=0$}
The $C^0$-smooth linear simplex splines $Q[\cK]$ on $\PSB$ have knot multiplicities satisfying $\mu_7 = \mu_8 = \mu_9 = 0$ by Lemma \ref{thm:KnotMultiplicities}, and therefore $\mu_1 + \cdots + \mu_6 + \mu_{10} = 4$ by \eqref{eq:MultiplicitySum0}. By \eqref{eq:InteriorSmoothness0}, $\mu_{10} \leq 1$. If $\mu_{10} = 1$, then \eqref{eq:InteriorSmoothness0} implies $\mu_1, \mu_2, \ldots, \mu_6\leq 1$. Up to symmetry, and systematically distinguishing cases by the number of corner knots, we obtain the simplex splines
\[ \SimS{1110000001},\ \SimS{1101000001},\ \SimS{1001010001},\ \SimS{0001110001}. \]
If $\mu_{10} = 0$, then, again distinguishing cases by the number of corner knots, we obtain the simplex splines 
\[ \SimS{2110000000},\ \SimS{1111000000},\ \SimS{1100110000},\ \SimS{2100010000},\ \SimS{1101010000},\ \SimS{2001010000},\ \SimS{1001110000}. \]

\subsubsection{The case $d=2$ and $r=1$}
The $C^1$-smooth quadratic simplex splines $Q[\cK]$ on $\PSB$ have knot multiplicities satisfying $\mu_7 = \mu_8 = \mu_9 = \mu_{10} = 0$ by Lemma~\ref{thm:KnotMultiplicities}, and
\[ \mu_1 + \cdots + \mu_6 = 5,\qquad \mu_1 + \mu_2 + \mu_3\geq 2,\qquad \mu_4 + \mu_5 + \mu_6 \leq 3. \]
Distinguishing cases by the number of corner knots, yields
\begin{equation}\label{eq:QuadraticSimplexSplines}
\SimS{3110000000},\ \SimS{2210000000},\ \SimS{2111000000},\ \SimS{1111010000},\ \SimS{2101010000},\ \SimS{1101110000},\ \SimS{3001010000}.
\end{equation}

\subsubsection{The case $d=3$ and $r=2$}
The $C^2$-smooth cubic simplex splines $Q[\cK]$ on $\PSB$ have, by Lemma~\ref{thm:KnotMultiplicities}, knot multiplicities satisfying $\mu_7 = \mu_8 = \mu_9 = \mu_{10} = 0$ 
\begin{equation}\label{eq:interiorexteriormultiplicities}
\mu_1 + \mu_2 + \mu_3\geq 3,\qquad \mu_4 + \mu_5 + \mu_6 \leq 3.
\end{equation}

Let $e = [\bfp_i, \bfp_k]$ be any edge of $\PS$ with midpoint $\bfp_j$. If $\mu_i + \mu_j + \mu_k < 5$, then $Q[\cK]|_e = 0$ by \eqref{eq:restriction}. In the remaining case $\mu_i + \mu_j + \mu_k = 5$ we demand that $Q[\cK]$ reduces to a B-spline on the boundary, yielding the conditions
\begin{equation}\label{eq:BoundaryBSpline}
\begin{aligned}
& \text{ not}(\mu_1 + \mu_4 + \mu_2 = 5 \text{ and } \mu_4 \geq 2),\\
& \text{ not}(\mu_1 + \mu_4 + \mu_2 = 5 \text{ and } \mu_1 \geq 1 \text{ and } \mu_2 \geq 1 \text{ and } \mu_4 \neq 1 ),\\
& \text{ not}(\mu_2 + \mu_5 + \mu_3 = 5 \text{ and } \mu_5 \geq 2),\\
& \text{ not}(\mu_2 + \mu_5 + \mu_3 = 5 \text{ and } \mu_2 \geq 1 \text{ and } \mu_3 \geq 1 \text{ and } \mu_5 \neq 1 ),\\
& \text{ not}(\mu_1 + \mu_6 + \mu_3 = 5 \text{ and } \mu_6 \geq 2),\\
& \text{ not}(\mu_1 + \mu_6 + \mu_3 = 5 \text{ and } \mu_1 \geq 1 \text{ and } \mu_3 \geq 1 \text{ and } \mu_6 \neq 1 ).
\end{aligned}
\end{equation}

\begin{theorem}\label{thm:simplexsplines12-split}
With one representative for each $\cG$-equivalence class, Table \ref{tab:AllSimplexSplines} presents an exhaustive list of the $C^2$ cubic simplex splines on $\PSB$ that reduce to either zero or a B-spline on the boundary.
\end{theorem}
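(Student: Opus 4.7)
The plan is to enumerate, up to the action of $\cG$, all multisets $\cK$ of size $|\cK| = 6$ that are compatible with the smoothness constraints compiled in Section \ref{sec:simplexenumeration}, and then discard those for which $Q[\cK]$ fails to restrict either to zero or to a univariate B-spline in $\{B_1^3, \ldots, B_5^3\}$ on each edge of $\PS$.

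First, I would invoke Lemma \ref{thm:KnotMultiplicities} with $d = 3, r = 2$. Since $3 \leq 2r + 1 = 5$ and $3 \leq \lceil 3r/2 \rceil = 3$, the lemma forces $\mu_7 = \mu_8 = \mu_9 = \mu_{10} = 0$, so $\cK$ is supported on $\bfp_1, \ldots, \bfp_6$, and together with \eqref{eq:MultiplicitySum0} this yields
\[ \mu_1 + \cdots + \mu_6 = 6, \qquad a := \mu_1 + \mu_2 + \mu_3 \geq 3, \qquad b := \mu_4 + \mu_5 + \mu_6 \leq 3. \]
Since $a + b = 6$, we have $(a, b) \in \{(3,3), (4,2), (5,1), (6,0)\}$, which organizes the whole enumeration into four cases.

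Next, for each such $(a,b)$ I would list the compositions $(\mu_1, \mu_2, \mu_3)$ of $a$ and $(\mu_4, \mu_5, \mu_6)$ of $b$ up to the diagonal $\cG$-action that simultaneously permutes the three corners $\bfp_1, \bfp_2, \bfp_3$ and the three midpoints $\bfp_4, \bfp_5, \bfp_6$ (with $\bfp_{i+3}$ opposite $\bfp_i$ under the natural identification). For each representative I would then check the six pairwise bounds \eqref{eq:InteriorSmoothness} (which for $d = 3, r = 2$ reduce to $\mu_i + \mu_j \leq 2$ whenever both multiplicities are nonzero) and the six boundary conditions \eqref{eq:BoundaryBSpline}. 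The pairwise bound is very effective as a pruning tool: a corner multiplicity of $3$ forces the two adjacent midpoints to vanish, a midpoint multiplicity of $2$ forces both adjacent corners and the opposite midpoint to vanish, and so on, so only a few patterns survive in each of the four cases.

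The main obstacle is purely combinatorial bookkeeping: I must be careful with the $\cG$-equivalence when one of the triples is already symmetric, for instance in $(a,b) = (3,3)$ where $(\mu_1, \mu_2, \mu_3) = (1,1,1)$ is $\cG$-invariant and $(\mu_4, \mu_5, \mu_6)$ still ranges over $\cG$-inequivalent compositions $(3,0,0)$, $(2,1,0)$, $(1,1,1)$. For the boundary condition I would use \eqref{eq:restriction}: on $e = [\bfp_i, \bfp_k]$ with midpoint $\bfp_j$, $Q[\cK]$ restricts to zero exactly when $\mu_i + \mu_j + \mu_k < 5$, and otherwise to a univariate B-spline with knot multiset $\{0^{\mu_i}\, 0.5^{\mu_j}\, 1^{\mu_k}\}$; being one of $B_1^3, \ldots, B_5^3$ excludes precisely the patterns listed in \eqref{eq:BoundaryBSpline}. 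Collecting the survivors and comparing with the entries of Table \ref{tab:AllSimplexSplines} completes the proof.
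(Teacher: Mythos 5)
Your plan is essentially the paper's own proof. The paper performs the same finite enumeration, using Lemma \ref{thm:KnotMultiplicities} to kill $\mu_7,\mu_8,\mu_9,\mu_{10}$, then \eqref{eq:MultiplicitySum0}, \eqref{eq:interiorexteriormultiplicities}, \eqref{eq:InteriorSmoothness} and \eqref{eq:BoundaryBSpline} to prune, and \eqref{eq:restriction} for the boundary behaviour; the only difference is organisational -- the paper splits into cases by the number of corners contained in the support $[\cK]$ (Cases 0, 1a, 1b, 2a, 2b, 3), while you split by $(a,b)=(\mu_1+\mu_2+\mu_3,\,\mu_4+\mu_5+\mu_6)\in\{(3,3),(4,2),(5,1),(6,0)\}$, which is an equivalent bookkeeping of the same search.

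There is, however, a concrete error in the geometric reading of \eqref{eq:InteriorSmoothness} that you rely on as your ``very effective pruning tool'', and it must be fixed before the case check comes out right. With the labels \eqref{eq:p456}, the midpoint opposite $\bfp_1$ is $\bfp_5$, opposite $\bfp_2$ is $\bfp_6$, opposite $\bfp_3$ is $\bfp_4$; the corner--midpoint pairs constrained in \eqref{eq:InteriorSmoothness} are exactly these opposite pairs, and the diagonal $\cG$-action identifies $1\leftrightarrow 5$, $2\leftrightarrow 6$, $3\leftrightarrow 4$, not $i\leftrightarrow i+3$ as you state. Consequently a corner multiplicity $3$ forces only the \emph{opposite} midpoint to vanish (the two adjacent midpoints are unconstrained), and a midpoint multiplicity $2$ forces the opposite corner and the other two midpoints to vanish (the adjacent corners are unconstrained). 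Followed literally, your rules would wrongly discard $Q[\bfp_1^3\bfp_2\bfp_4\bfp_6]$ and $Q[\bfp_1^3\bfp_2\bfp_3\bfp_4]$, both of which appear in Table \ref{tab:AllSimplexSplines} (they are admissible because $\mu_5=0$, respectively $\mu_3+\mu_4=2$), while wrongly retaining $Q[\bfp_1^3\bfp_2\bfp_5\bfp_6]$, which is only $C^0$ across the median $[\bfp_1,\bfp_5]$ since $\mu_1+\mu_5=4$ and which the true inequality $\mu_1+\mu_5\le 2$ eliminates. With the pairs read off correctly from \eqref{eq:InteriorSmoothness}, your enumeration over the four cases $(a,b)$, combined with \eqref{eq:restriction} and \eqref{eq:BoundaryBSpline} on each edge, does reproduce exactly the seven $\cG$-classes of the cubic row of Table \ref{tab:AllSimplexSplines}.
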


\begin{table}
\begin{tabular*}{\columnwidth}{@{ }@{\extracolsep{\stretch{1}}}*{10}{c}@{ }}
\toprule
\raisebox{-1.3em}{$d = 1$} & \SmpS{2001010000} & \SmpS{1101010000} & \SmpS{1111000000} & \SmpS{1101000001} & \SmpS{1110000001} & \SmpS{1100110000} & \SmpS{1001110000} & \SmpS{1001010001} & \SmpS{0001110001}\\
        & \SimS{2001010000} & \SimS{1101010000} & \SimS{1111000000} & \SimS{1101000001} & \SimS{1110000001} & \SimS{1100110000} & \SimS{1001110000} & \SimS{1001010001} & \SimS{0001110001}\\
\midrule
\raisebox{-1.3em}{$d = 2$} & \SmpS{3001010000} & \SmpS{2101010000} & \SmpS{2111000000} & \SmpS{1101110000}  & \SmpS{1111010000} \\
        & \SimS{3001010000} & \SimS{2101010000} & \SimS{2111000000} & \SimS{1101110000}  & \SimS{1111010000} \\
\midrule
\raisebox{-1.3em}{$d = 3$} & \SmpS{4001010000} & \SmpS{3101010000} & \SmpS{3111000000} & \SmpS{2220000000} & \SmpS{2211000000} & \SmpS{2111010000} & \SmpS{1111110000}\\
        & \SimS{400101} & \SimS{310101} & \SimS{311100} & \SimS{222000} & \SimS{221100} & \SimS{211101} & \SimS{111111}\\
\bottomrule
\end{tabular*}
\caption[]{For $d = 1, 2, 3$, the $C^{d-1}$-smooth simplex splines of degree $d$ on $\PSB$, one representative for each $\cG$-equivalence class, that reduce to a B-spline on the boundary.}\label{tab:AllSimplexSplines}
\end{table}

\begin{proof}
By \eqref{eq:WrongKnotLines}, it suffices to consider the following cases according to the support $[\cK]$ of $Q[\cK]$, up to a symmetry of $\cG$,

\emph{Case 0, no corner included, $[\cK] = [\bfp_4, \bfp_5, \bfp_6]$}: By \eqref{eq:MultiplicitySum0}, $\mu_4 + \mu_5 + \mu_6 = 6$, contradicting \eqref{eq:InteriorSmoothness}. Hence this case does not happen.

\emph{Case 1a, 1 corner included, $[\cK] = [\bfp_1, \bfp_4, \bfp_6]$}: For a positive support $\mu_1, \mu_4, \mu_6\geq 1$, and since $\mu_4 + \mu_6 \leq 2$ by \eqref{eq:InteriorSmoothness}, we obtain $\SimS{400101}$.

\emph{Case 1b, 1 corner included, $[\cK] = [\bfp_1, \bfp_4, \bfp_5, \bfp_6]$}: 
By \eqref{eq:MultiplicitySum0} and \eqref{eq:interiorexteriormultiplicities} one has $\mu_1 = 6 - \mu_4 - \mu_5 - \mu_6 \geq 3$, contradicting $\mu_1 + \mu_5 \leq 2$ from \eqref{eq:InteriorSmoothness}. Hence this case does not occur.

\emph{Case 2a, 2 corners included, $[\cK] = [\bfp_1, \bfp_2, \bfp_6]$}: For a positive support, $\mu_1, \mu_2, \mu_6 \geq 1$. Since $\mu_2 + \mu_6\leq 2$ by \eqref{eq:InteriorSmoothness}, it follows $\mu_2 = \mu_6 = 1$. Moreover, $\mu_4 = 1$ by \eqref{eq:BoundaryBSpline}, and we obtain $\SimS{310101}$.

\emph{Case 2b, 2 corners included, $[\cK] = [\bfp_1, \bfp_2, \bfp_5, \bfp_6]$}: Since $\mu_1 + \mu_5, \mu_2 + \mu_6 \leq 2$ by \eqref{eq:InteriorSmoothness}, implying $\mu_1 = \mu_2 = \mu_5 = \mu_6 = 1$. Then $\mu_4 = 2$ by \eqref{eq:MultiplicitySum0}, contradicting \eqref{eq:BoundaryBSpline}. Hence this case does not occur.

\emph{Case 3, 3 corners included, $[\cK] = [\bfp_1, \bfp_2, \bfp_3]$}: We distinguish cases for $(\mu_4, \mu_5, \mu_6)$, with $\mu_4\geq \mu_5\geq \mu_6$. By \eqref{eq:InteriorSmoothness}, $\mu_4, \mu_5, \mu_6\leq 1$. 
\begin{itemize}
\item[(0,0,0)] One has $\mu_1, \mu_2, \mu_3 = 2$ by \eqref{eq:BoundaryBSpline}, and we obtain $\SimS{222000}$.
\item[(1,0,0)] One has $\mu_3 + \mu_4 \leq 2$ by \eqref{eq:InteriorSmoothness} implying $\mu_3 = \mu_4 = 1$, yielding $\SimS{311100}$ and $\SimS{221100}$.
\item[(1,0,1)] One has $\mu_3 + \mu_4, \mu_2 + \mu_6 \leq 2$ by \eqref{eq:InteriorSmoothness}, implying $\mu_2 = \mu_3 = \mu_4 = \mu_6 = 1$. It follows from \eqref{eq:MultiplicitySum0} that $\mu_1 = 2$, yielding $\SimS{211101}$.
\item[(1,1,1)] From \eqref{eq:MultiplicitySum0} one immediately obtains $\SimS{111111}$. \qedhere
\end{itemize}
\end{proof}

\section{S-bases on the 12-split}\label{sec:S-bases}\label{sec:known}

For $d = 0,1,2,3$, consider the S-bases $\vs_d^\rmT = [S_{i, n_d}]_{i=1}^{n_d}$ listed in Table \ref{tab:dualpolynomials}. In this section, we relate these bases through a matrix recurrence relation (Property P5), generalizing Theorem~2.3 and Corollary~2.4 in \cite{Cohen.Lyche.Riesenfeld13} for $d\leq 2$.

\begin{theorem}\label{thm:recrel}
We have 
\begin{equation}\label{eq:bdR}
\vs_d^\rmT = \vs_{d-1}^\rmT \mR_d, \qquad d=1,2,3,
\end{equation}
where $\mR_1 \in\PP_1^{12,10}$ is given by \eqref{eq:R1}, $\mR_2 \in\PP_1^{10,12}$ by \eqref{eq:R2}, and $\mR_3 \in\PP_1^{12,16}$ by \eqref{eq:R3}. Moreover, $\mR_d(i,j)S_{i,d-1}(\vx)\ge 0$ for all $i,j$ and $\vx\in\PS$.
\end{theorem}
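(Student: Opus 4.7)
The plan is to establish \eqref{eq:bdR} by applying the simplex spline recursion formula \eqref{eq:Qrec} to each basis element $S_{j,d}$. Concretely, for a simplex spline $Q[\cK]$ of degree $d$ with knot multiset $\cK\subset\{\bfp_1,\ldots,\bfp_{10}\}$, write $\vx$ as an affine combination $\vx = \sum_\ell \beta_\ell \bfk_\ell$ with $\sum_\ell \beta_\ell = 1$, so that
\[ Q[\cK](\vx) \;=\; \sum_\ell \beta_\ell\, Q[\cK\setminus \bfk_\ell](\vx). \]
Since $|\cK|=d+3>3$, this affine representation is not unique, leaving freedom in the choice of $\beta_\ell$. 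I would exploit this freedom to pick each $\beta_\ell$ as a linear polynomial in the barycentric coordinates of $\PS$, preferably one of the short-hands $\gamma_j,\ \beta_{i,j},\ \sigma_{i,j}$ from \eqref{eq:gammabetasigma}. The resulting scalars then sit as entries of $\mR_d$ in $\PP_1$, as claimed.

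Next I would identify the reduced simplex splines $Q[\cK\setminus\bfk_\ell]$ appearing in each expansion with elements of $\vs_{d-1}$ (up to scaling). In many cases this identification is immediate since $\cK\setminus\bfk_\ell$ already matches, up to a symmetry in $\cG$, a knot multiset in Table~\ref{tab:AllSimplexSplines} selected for $\vs_{d-1}$. In the few remaining cases the reduced spline is either zero (because $[\cK\setminus\bfk_\ell]$ has no area), or must be rewritten using the knot insertion formula \eqref{eq:Qins} into a combination of elements of $\vs_{d-1}$; the template for this rewriting is already illustrated in \eqref{eq:222000to111111}, and exactly such a reduction will be needed for splines like $\SimS{221100}$ and $\SimS{222000}$ in $\vs_3, \tilde\vs_3$. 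Because the bases are $\cG$-invariant, it suffices to perform the calculation for one representative of each $\cG$-equivalence class and then propagate the resulting columns of $\mR_d$ by the action of $\cG$, giving the matrices $\mR_1,\mR_2,\mR_3$ stated in \eqref{eq:R1}--\eqref{eq:R3}.

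For the positivity statement $\mR_d(i,j)S_{i,d-1}(\vx)\ge 0$, I would use that each $S_{i,d-1}\ge 0$ (from the partition of unity property P3 established via the Marsden identity) and that $\supp(S_{i,d-1})=[\cK_i]$ is contained in a small union of subtriangles of $\PSB$. On each such subtriangle the sign of any linear form $\gamma_j,\beta_{i,j},\sigma_{i,j}$ is explicit; in particular the key inequality \eqref{eq:positivegamma} for $\gamma_j$, together with the trivial $\sigma_{i,j}\ge 0$ on $\PS$, yields the sign of $\mR_d(i,j)$ on $\supp(S_{i,d-1})$. The freedom in choosing the $\beta_\ell$ is precisely what will let me arrange each entry $\mR_d(i,j)$ to have the correct sign on $\supp(S_{i,d-1})$.

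The main obstacle is the simultaneous satisfaction of three constraints in a single choice of $\beta_\ell$: (i) every $\beta_\ell$ is a linear polynomial in the barycentric coordinates rather than an arbitrary affine expression; (ii) each nonzero $Q[\cK\setminus\bfk_\ell]$ coincides, up to scaling, with an element of $\vs_{d-1}$ (which may force a detour through \eqref{eq:Qins} for the cubic splines whose reduced knot multisets fall outside Table~\ref{tab:AllSimplexSplines}); and (iii) the resulting coefficients have the correct sign on the support of the associated $S_{i,d-1}$. Once a single valid choice is exhibited for each $\cG$-orbit representative, the rest follows mechanically from $\cG$-invariance and from the sign information on each subtriangle $\PS_k$.
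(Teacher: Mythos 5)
Your proposal follows essentially the same route as the paper: expand each $S_{j,d}$ via the recursion \eqref{eq:Qrec} with judiciously chosen affine weights (barycentric coordinates with respect to suitable subtriangles), convert reduced splines not in $\vs_{d-1}$ by knot insertion \eqref{eq:Qins} (exactly what the paper does for $S_{3,3}$ and $S_{13,3}$), use $\cG$-invariance to treat only one column per orbit, and verify the sign of each entry on the support of the corresponding $S_{i,d-1}$ via \eqref{eq:positivegamma}. The only cosmetic difference is that nonnegativity of $S_{i,d-1}$ is immediate from the simplex-spline definition rather than from P3, and the paper simply cites \cite{Cohen.Lyche.Riesenfeld13} for $d=1,2$, proving only the cubic case.
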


\begin{corollary}\label{cor:BRR}
Suppose $\vx\in\PS_k$ for some $1\le k\le 12$. Then
\begin{equation}\label{eq:sdr}
\vs_d^\rmT = \ve_k^\rmT \mR_1\cdots\mR_d, \qquad d = 0, 1, 2, 3.
\end{equation}
\end{corollary}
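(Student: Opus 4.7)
The plan is to combine the iteration of the matrix recurrence \eqref{eq:bdR} with an explicit analysis of the base case $d=0$.

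First, I would iterate Theorem~\ref{thm:recrel}. Applying \eqref{eq:bdR} repeatedly gives, for $d = 1, 2, 3$,
\[ \vs_d^\rmT = \vs_{d-1}^\rmT \mR_d = \vs_{d-2}^\rmT \mR_{d-1}\mR_d = \cdots = \vs_0^\rmT \mR_1 \mR_2\cdots \mR_d, \]
with both sides understood pointwise at a common $\vx\in\PS$ (the entries of $\mR_j$ lying in $\PP_1$). With the convention that an empty matrix product is the identity, this identity already covers $d=0$ as well, and it reduces \eqref{eq:sdr} to proving the pointwise base claim
\[ \vs_0^\rmT(\vx) = \ve_k^\rmT \qquad \text{for } \vx\in\PS_k. \]

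Next I would verify this base case using the explicit form of $\vs_0$ listed in Section~\ref{sec:symmetries}. Every entry of $\vs_0$ is a scalar multiple of a constant simplex spline $Q[\cK]$ with $|\cK| = 3$, which by the base case of \eqref{eq:Qrec2} equals $\tfrac{\area(\PSsmall)}{\area([\cK])}\bfone_{[\cK)}$. A short area computation shows that the twelve subtriangles of $\PSB$ split into two $\cG$-orbits of size six, one consisting of subtriangles of area $\tfrac{1}{8}\area(\PSsmall)$ and the other of area $\tfrac{1}{24}\area(\PSsmall)$, matching exactly the two prefactors $\tfrac{1}{8}$ and $\tfrac{1}{24}$ appearing in the description of $\vs_0$. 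Hence each entry of $\vs_0$ is the indicator function of a single subtriangle, and the map from basis entries to subtriangles is a bijection. Fixing the ordering so that the $k$-th entry of $\vs_0$ is $\bfone_{\PS_k}$, as labeled in Figure~\ref{fig:ps12-numbering}, the half-open convention of Section~\ref{sec:notation} guarantees that each $\vx\in\PS$ lies in exactly one $\PS_k$, so $S_{j,0}(\vx) = \delta_{jk}$ and thus $\vs_0^\rmT(\vx) = \ve_k^\rmT$.

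The only real obstacle is bookkeeping: aligning the $\cG$-orbit decomposition of $\vs_0$ with the numbering of subtriangles in Figure~\ref{fig:ps12-numbering} and confirming that the half-open regions $[\cK)$ partition $\PS$ as depicted. Once these conventions are fixed, substituting the base case into the iterated recurrence gives \eqref{eq:sdr} directly.
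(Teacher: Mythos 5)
Your proposal is correct and follows essentially the same route as the paper: the paper records the base case explicitly as $S_{j,0}=\mathbf{1}_{\PS_j}$ in \eqref{eq:S0} and then notes that the corollary follows immediately by iterating Theorem~\ref{thm:recrel}, exactly as you do. Your extra verification that the prefactors $\tfrac18$ and $\tfrac1{24}$ normalize the constant simplex splines to indicator functions of the half-open subtriangles is a sound (if slightly more detailed) justification of \eqref{eq:S0}.
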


In the remainder of the section we build up this recurrence relation, starting from degree 0. We will make use of the short-hands \eqref{eq:gammabetasigma} involving the barycentric coordinates $\beta_1,\beta_2,\beta_3$ of $\vx$ with respect to the triangle $\PS$.

\begin{figure}
\subfloat[]{\includegraphics[scale=1.09, trim = 1 5 3 5]{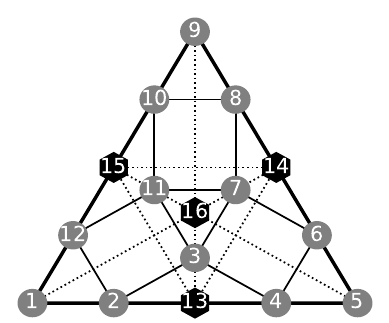} \label{fig:ControlNet-2}}
\subfloat[]{\includegraphics[scale=1.09, trim = 1 5 3 5]{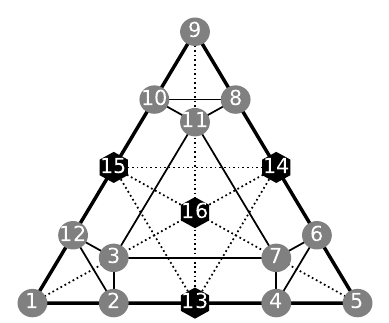}\label{fig:ControlNet-2t}}
\subfloat[]{\includegraphics[scale=1.09, trim = 1 5 3 5]{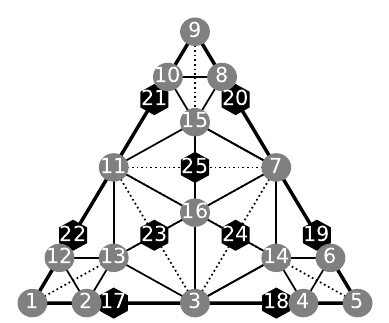} \label{fig:ControlNet-3}}
\subfloat[]{\includegraphics[scale=1.09, trim = 1 5 3 5]{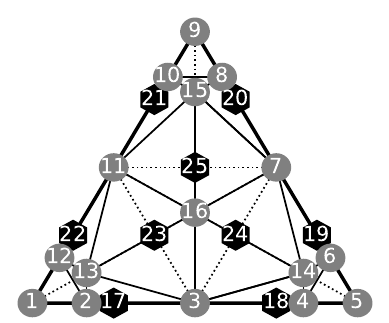}\label{fig:ControlNet-3t}}
\caption{
Domain meshes (solid) with numbering of the domain points (circles) and remaining dual point averages (hexagons), used in the quasi-interpolant \eqref{eq:quasi-interpolant}, for the bases (A) $\vs_2$, (B) $\tilde{\vs}_2$, (C) $\vs_3$,  (D) $\tilde{\vs}_3$ on the Powell-Sabin 12-split (dotted).
}\label{fig:domain_mesh}
\end{figure}

\subsection{Constant S-basis}
Since $\SS_0 (\PSB)$ has dimension $n_0 = 12$, it is easy to see that there is a unique S-basis $\vs_0 = [S_{1,0}, \ldots, S_{12,0}]$ forming a partition of unity. Explicitly, 
\begin{equation}\label{eq:S0}
S_{j,0}(\vx) = \mathbf{1}_{\PSsmall_j} (\bfx) := \begin{cases} 1,& \bfx\in\PS_j,\\ 0,&\text{otherwise},
\end{cases}\qquad j = 1,\ldots, 12,
\end{equation}
where the $\PS_j$ are the half-open subtriangles in Figure \ref{fig:ps12-numbering} (right), with disjoint union $\PS_1 \sqcup \cdots \sqcup \PS_{12} = \PS$. This implies that Corollary \ref{cor:BRR} follows immediately from Theorem \ref{thm:recrel}.

\subsection{Linear S-basis}
The basis $\vs_1^\rmT = \left[S_{1,1}, \ldots, S_{10,1}\right]$ of $\SS_1(\PSB)$ is the nodal basis dual to the point evaluations at the vertices of $\PSB$, i.e., $S_{j,1}(\vp_i)=\delta_{i,j}$, $i,j=1,\ldots,10$. Represented as elements of $\PP_1^{12}$, the basis functions $S_{1,1}, \ldots, S_{10,1}$ are precomputed and assembled as the columns of the matrix
\begin{equation}\label{eq:R1}
\mR_1 =
\begin{bmatrix}
\gamma_1& 0& 0& 0& 0&2\beta_{3,2} & 4 \beta_2& 0& 0& 0\\
 \gamma_1& 0& 0& 2\beta_{2,3}& 0& 0& 4 \beta_3& 0& 0& 0\\
 0& \gamma_2& 0&2\beta_{1,3}& 0& 0& 0& 4 \beta_3& 0& 0\\
 0& \gamma_2& 0& 0& 2\beta_{3,1}& 0& 0& 4 \beta_1& 0& 0\\
 0& 0&\gamma_3& 0&2\beta_{2,1}& 0& 0& 0& 4 \beta_1& 0\\
 0& 0& \gamma_3& 0& 0& 2\beta_{1,2}& 0& 0& 4 \beta_2& 0\\
 0& 0& 0& 0& 0& 2 \beta_{3,2}& 4 \beta_{1,3}& 0& 0& -3\gamma_1\\
 0& 0& 0& 2\beta_{2,3} & 0& 0& 4\beta_{1,2}& 0& 0& -3\gamma_1\\
 0& 0& 0& 2\beta_{1,3} & 0& 0& 0& 4\beta_{2,1}& 0& -3\gamma_2\\
 0& 0& 0& 0& 2\beta_{3,1}& 0& 0& 4\beta_{2,3} & 0& -3\gamma_2\\
 0& 0& 0& 0& 2\beta_{2,1} & 0& 0& 0& 4\beta_{3,2} & -3\gamma_3\\
 0& 0& 0& 0& 0& 2\beta_{1,2} & 0& 0& 4\beta_{3,1}& -3\gamma_3
\end{bmatrix}
\in \PP_1^{12,10}.
\end{equation}
The element $\mR_1(i,j)$ in row $i$ and column $j$ gives the value of $S_{j,1}(\vx)$ in subtriangle $\PS_i$, which can be seen to be nonnegative in $\PS_i$. For instance, for the last column this follows from \eqref{eq:positivegamma}.

\subsection{Quadratic S-basis}
Next we consider the quadratic S-basis $\vs^\rmT_2 = [S_{1,2},\ldots,S_{12,2}]$. This basis is precomputed using the recurrence relation \eqref{eq:Qrec2}. With appropriate choices of the coefficients in this relation, the result of the precomputation is the matrix
\begin{equation}\label{eq:R2}
\mR_2 =\left[
\begin{array}{cccccccccccc}
 \gamma_1 & 2 \beta_2 & 0 & 0 & 0 & 0 & 0 & 0 & 0 & 0 &
   0 & 2\beta_3 \\
 0 & 0 & 0 & 2 \beta_1 & \gamma_2 & 2\beta_3 & 0 & 0 & 0 & 0
   & 0 & 0 \\
 0 & 0 & 0 & 0 & 0 & 0 & 0 & 2\beta_2 & \gamma_3 & 2 \beta_1
   & 0 & 0 \\
 0 & \beta_{1,3} & 3\beta_3 & \beta_{2,3} & 0 & 0 & 0 & 0
   & 0 & 0 & 0 & 0 \\
 0 & 0 & 0 & 0 & 0 & \beta_{2,1} & 3\beta_1 & \beta_{3,1} &
   0& 0 & 0 & 0 \\
 0 & 0 & 0 & 0 & 0 & 0 & 0 & 0 & 0 & \beta_{3,2} & 3
  \beta_2 & \beta_{1,2} \\
 0 & \frac{\beta_{1,3}}{2} & \frac{3\beta_2}{2} & 0 & 0 &
   0 & 0 & 0 & 0 & 0 & \frac{3\beta_3}{2} &
   \frac{\beta_{1,2}}{2} \\
 0 & 0 & \frac{3\beta_1}{2} & \frac{\beta_{2,3}}{2} & 0 &
   \frac{\beta_{2,1}}{2} & \frac{3\beta_3}{2} & 0 & 0 & 0
   & 0 & 0 \\
 0 & 0 & 0 & 0 & 0 & 0 & \frac{3\beta_2}{2} &
   \frac{\beta_{3,1}}{2} & 0 & \frac{\beta_{3,2}}{2}
   & \frac{3\beta_1}{2} & 0 \\
 0 & 0 & -\gamma_3 & 0 & 0 & 0 & -\gamma_1 & 0 &
   0 & 0 & -\gamma_2 & 0
\end{array}
\right] \in \PP_1^{10,12}.
\end{equation}
The element in row $i$ and column $j$ of the matrix product $\mR_1(\vx)\mR_2(\vx)$ gives the value of $S_{j,2}(\vx)$ in triangle $\PS_i$. This computation only involves nonnegative combinations of nonnegative quantities. Thus the computation of the $S_{j,2}$ is fast and stable.

\begin{remark}[Alternative quadratic S-basis]\label{rem:AlternativeQuadratic}
The basis $\vs_2$ is the unique quadratic simplex spline basis with local linear independence, as changing out any of its elements with another spline in the second row of Table \ref{tab:AllSimplexSplines} will cause the outer subtriangles $\PS_1, \PS_2,\ldots, \PS_6$ to become overloaded.

Consider the basis $\tilde{\vs}_2$ as in Table \ref{tab:dualpolynomials}, which only differs from $\vs_2$ in the entries 3,7,11, satisfying the relation
\begin{equation}\label{eq:TransformationQuadraticAlternative}
\left[
\begin{array}{c}
\frac34 \SimS{111101} \\
\frac34 \SimS{111110} \\
\frac34 \SimS{111011}
\end{array}
\right]
=
\mT_2'^\rmT
\left[
\begin{array}{c}
\frac34 \SimS{110111} \\
\frac34 \SimS{011111} \\
\frac34 \SimS{101111}
\end{array}
\right],\qquad \mT_2'^\rmT = \begin{bmatrix}
\frac12 & 0 & \frac12\\
\frac12 & \frac12 & 0\\
0 & \frac12 & \frac12
\end{bmatrix},
\end{equation}
which follows from knot insertion \eqref{eq:Qins} at the midpoints in terms of the endpoints. Hence $\tilde{\vs}_2^\rmT = \vs_2^\rmT \mT_2 $, where $\mT_2 \in \RR^{12,12}$ is obtained from the identity matrix by replacing its principal $(3,7,11)$-submatrix by $\mT_2'$. Hence \eqref{eq:bdR}, \eqref{eq:sdr} hold, for $d = 2$, with $\vs_2$ replaced by $\tilde{\vs}_2$ and $\mR_2$ replaced by $\tilde{\mR}_2 := \mR_2 \mT_2$. 
\end{remark}

\subsection{Cubic S-basis}\label{sec:CubicBasis}
Finally we consider the cubic S-basis $\vs_3^\rmT = [S_{1,3},\ldots,S_{16,3}]$. This basis is precomputed using the recurrence relation \eqref{eq:Qrec2}. With appropriate choices of the coefficients in this relation, the result of the precomputation is the matrix
\begin{equation}\label{eq:R3}
\mR_3 = 
\left[\small
\begin{array}{cccccccccccccccc}
 \gamma_1 & 2 \beta _2 & 0 & 0 & 0 & 0 & 0 & 0 & 0 & 0 &
   0 & 2 \beta _3 & 0 & 0 & 0 & 0 \\
 0 & \beta _{1,3} & \beta _2 & 0 & 0 & 0 & 0 & 0 & 0 & 0
   & 0 & 0 & 2 \beta _3 & 0 & 0 & 0 \\
 0 & 0 & \frac{\sigma_{1,2}}{3} & 0 & 0 &
   0 & \frac{\beta _3}{3} & 0 & 0 & 0 & \frac{\beta _3}{3} &
   0 & \frac{2 \beta _1}{3} & \frac{2 \beta _2}{3} & 0 &
   \frac{\beta _3}{3} \\
 0 & 0 & \beta _1 & \beta _{2,3} & 0 & 0 & 0 & 0 & 0 & 0
   & 0 & 0 & 0 & 2 \beta _3 & 0 & 0 \\
 0 & 0 & 0 & 2 \beta _1 & \gamma_2 & 2 \beta _3 & 0 & 0 &
   0 & 0 & 0 & 0 & 0 & 0 & 0 & 0 \\
 0 & 0 & 0 & 0 & 0 & \beta _{2,1}& \beta _3 & 0 & 0 & 0
   & 0 & 0 & 0 & 2 \beta _1 & 0 & 0 \\
 0 & 0 & \frac{\beta _1}{3} & 0 & 0 & 0 & \frac{
   \sigma_{2,3}}{3} & 0 & 0 & 0 & \frac{\beta
   _1}{3} & 0 & 0 & \frac{2 \beta _2}{3} & \frac{2 \beta
   _3}{3} & \frac{\beta _1}{3} \\
 0 & 0 & 0 & 0 & 0 & 0 & \beta _2 & \beta _{3,1} & 0 & 0
   & 0 & 0 & 0 & 0 & 2 \beta _1 & 0 \\
 0 & 0 & 0 & 0 & 0 & 0 & 0 & 2 \beta _2 & \gamma_3 & 2
   \beta _1 & 0 & 0 & 0 & 0 & 0 & 0 \\
 0 & 0 & 0 & 0 & 0 & 0 & 0 & 0 & 0 & \beta _{3,2} &
   \beta _1 & 0 & 0 & 0 & 2 \beta _2 & 0 \\
 0 & 0 & \frac{\beta _2}{3} & 0 & 0 & 0 & \frac{\beta _2}{3}
   & 0 & 0 & 0 & \frac{\sigma_{1,3}}{3}&
   0 & \frac{2 \beta _1}{3} & 0 & \frac{2 \beta _3}{3} &
   \frac{\beta _2}{3} \\
 0 & 0 & 0 & 0 & 0 & 0 & 0 & 0 & 0 & 0 & \beta _3 & \beta
   _{1,2} & 2 \beta _2 & 0 & 0 & 0
\end{array} \right] \in \PP_1^{12,16}.
\end{equation}

\begin{proof}[Proof of Theorem \ref{thm:recrel}]
It remains to show the statement in the cubic case. Using the $\cG$-invariance of the basis, it suffices to show the recursion relations for the columns $j=1,2,3,13,16$ of $\mR_3(\vbeta)$. We find
\begin{align*}
S_{1,3} :=\ &\frac{1}{4}\SimS{400101} \overset{\eqref{eq:Qrec}}{=}\,
             \frac{1}{4}\beta_1^{1,4,6}\SimS{300101} = \gamma_1S_{1,2}, \\
S_{2,3} :=\ &\frac{1}{2}\SimS{310101} \overset{\eqref{eq:Qrec}}{=}\,
\frac{1}{2}\beta_1^{1,2,6}\SimS{210101} + \frac{1}{2}\beta_2^{1,2,6}\SimS{300101}
= \beta_{1,3} S_{2,2}+2\beta_2S_{1,2},\\ 
S_{3,3} :=\ &\phantom{\frac11} \SimS{221100}\overset{\eqref{eq:Qrec}}{=}\beta_1\SimS{121100}+\beta_2\SimS{211100}
\overset{\eqref{eq:Qins}}{=} \beta_1\left(\frac12\SimS{111110}+\frac12\SimS{120110}\right)+\beta_2\left(\frac12\SimS{111101}+\frac12\SimS{210101}\right)\\
\overset{\eqref{eq:Qins}}{=}&\beta_1\left(\frac14\SimS{011111}+\frac14\SimS{110111}+\frac12\SimS{120110}\right)+\beta_2\left(\frac14\SimS{101111}+\frac14\SimS{110111}+\frac12\SimS{210101}\right)\\
 =\ & \beta_1\left(\frac14\frac43S_{7,2}+\frac14\frac43S_{3,2}+S_{4,2}\right)+\beta_2\left(\frac14\frac43S_{11,2}+\frac14\frac43S_{3,2}+S_{2,2}\right)\\
 =\ &\beta_2S_{2,2}+\frac13(\beta_1+\beta_2)S_{3,2}
+\beta_1S_{4,2}+\frac13\beta_1S_{7,2}+\frac13\beta_2S_{11,2},\\
S_{13,3} :=\ &\SimS{211101}\overset{\eqref{eq:Qrec}}{=}\beta_1\SimS{111101}+\beta_2\SimS{201101}+\beta_3\SimS{210101}
\overset{\eqref{eq:Qins}}{=}\beta_1\left(\frac12\SimS{101111}+\frac12\SimS{110111}\right)+\beta_2\SimS{201101}+\beta_3\SimS{210101}\\
 =\ & \beta_1\left(\frac12\frac43S_{11,2}+\frac12\frac43S_{3,2}\right)+2\beta_2 S_{12,2}+2\beta_3 S_{2,2}
 = 2\beta_3 S_{2,2}+\frac23\beta_1S_{3,2}+\frac23\beta_1S_{11,2}+2\beta_2 S_{12,2},\\
S_{16,3} :=\ &\frac14\SimS{111111}
\overset{\eqref{eq:Qrec}}{=}\frac14\left(\beta_1\SimS{011111}+\beta_2\SimS{101111}+\beta_3\SimS{110111}\right)
= \frac14\frac43\left(\beta_1S_{7,2}+\beta_2S_{11,2}+\beta_3S_{3,2}\right).
\end{align*}

Clearly all coefficients in the recurrence relations for $S_{3,3}, S_{13,3}, S_{16,3}$ are nonnegative on $\PS$. The same holds for $S_{1,3}$ on the triangle $[\vp_1,\vp_4,\vp_6]$ and for $S_{2,3}$ on the triangle $[\vp_1,\vp_2,\vp_6]$. The remaining columns of $\mR_3$ can be found similarly, or using $\cG$-invariance. 

The final statement is easily checked by verifying that for each column $i$ with entry $\gamma_j$ the corresponding spline $S_{i,2}$ has support satisfying $\beta_j \geq \frac12$, and for each column $i$ with entry $\beta_{j,k}$ the corresponding spline $S_{i,2}$ has support satisfying $\beta_j \geq \beta_k$.
\end{proof}

\begin{remark}[No local linear independence]\label{rem:s3P9}
Since $\SS_3(\PSB)$ contains all 10 cubic polynomials on $\PS$, the basis $\vs_3$ has local linear independence (Property P9) if exactly 10 cubic S-splines in $\vs_3$ overlap each triangle $\PS_k$. Now the support of $S_{j,3}$, $j=3,7,11,13,14,15,16$, a total of 7 functions,  contains all the triangles. While the inner triangles $\PS_i$, $i=7,8,9,10,11,12$, contain the support of 10 cubic S-splines, the border triangles $\PS_i$, $i=1,2,3,4,5,6$, contain the support of 11 cubic S-splines. Hence the basis $\vs_3$ does not have local linear independence.
\end{remark}

\begin{remark}[Alternative cubic S-basis]\label{rem:AlternativeCubic}
Consider the alternative basis $\tilde{\vs}_3$, which only differs from $\vs_3$ in the entries $13,14,15,16$. From \eqref{eq:222000to111111} it follows that
\begin{equation}\label{eq:TransformationCubicAlternative}
\begin{bmatrix}
\frac34 \SimS{211101}\\ \frac34 \SimS{121110}\\ \frac34 \SimS{112011}\\ \phantom{+}\SimS{222000}
\end{bmatrix}
= \mT_3'^\rmT
\begin{bmatrix}
\phantom{+}\SimS{211101}\\ \phantom{+}\SimS{121110}\\ \phantom{+}\SimS{112011}\\ \frac14 \SimS{111111}
\end{bmatrix}
,\qquad \mT_3'^\rmT = 
\begin{bmatrix}
\frac34 & 0 & 0 & 0\\
0 & \frac34 & 0 & 0\\
0 & 0 & \frac34 & 0\\
\frac14 & \frac14 & \frac14 & 1
\end{bmatrix},
\end{equation}
and therefore $\tilde{\vs}_3^\rmT = \vs_3^\rmT \mT_3 $, where $\mT_3 \in \RR^{16,16}$ is obtained from the identity matrix by replacing its principal $(13,14,15,16)$-submatrix by $\mT_3'$. Hence \eqref{eq:bdR}, \eqref{eq:sdr} hold, for $d = 3$, with $\vs_3$ replaced by $\tilde{\vs}_3$ and $\mR_3$ replaced by $\tilde{\mR}_3 := \mR_3 \mT_3$ (but keeping $\vs_2$ and $\mR_2$ the same). The alternative basis $\tilde{\vs}_3$ does not satisfy Property P9, by Remark~\ref{rem:s3P9} and since the transformation \eqref{eq:TransformationCubicAlternative} does not change the support of the basis functions.
\end{remark}

\subsection{Fast evaluation}
Since the support of most splines in the bases $\vs_d$ only cover part of $\PS$, the evaluation procedure \eqref{eq:sdr} of $\vs_d$ (and similarly for its derivatives) for points on a given triangle can be efficiently implemented using multiplication of submatrices. For this purpose we define the index sets
\begin{equation}\label{eq:gid}
\begin{aligned}
\cG_d^k&:=\{j:\PS_k\subset\supp(S_{j,d}
)\},&\quad & k=1,\ldots,12, &\quad & d=0,1,2,3,\\
\cH_d^k&:=\{j:\mR_d(k,j)\ne 0\}, &\quad & k=1,\ldots,n_{d-1}, &\quad & d=1,2,3.
\end{aligned}
\end{equation}
Here $\cG_d^k$ encodes the splines in $\vs_d^\rmT$ that are nonzero over $\PS_k$, and $\cH_d^k$ encodes the splines in $\vs_{d-1}^\rmT$ that appear in the recurrence relation for $S_{k,d}$. In particular $\cG^k_0 = \{k\}$, and the remaining sets are listed explicitly in Table~\ref{tab:hg}. We use the symbols $\vg_d^k$ and $\vh_d^k$ for the vectors consisting of the elements in $\cG_d^k$ and $\cH_d^k$, respectively, arranged in increasing order.

\begin{table}
\begin{tabular*}{\columnwidth}{@{ }@{\extracolsep{\stretch{1}}}*{6}{c}@{ }}
\toprule
        &$\cH_1^k=\cG_1^k$&$\cH_2^k$&$\cH_3^k$&$\cG_2^k$&$\overline{\cG}_3^k$\\
\midrule
$k = 1$ & 1,6,7 & 1,2,12 & 1,2,12 & 1,2,3,10,11,12 & 1,2,10,12\\
\midrule
$k = 2$ &1,4,7&4,5,6&2,3,13&1,2,3,4,11,12&1,2,4,12\\
\midrule
$k = 3$ &2,4,8&8,9,10&3,7,11,13,14,16&2,3,4,5,6,7&2,4,5,6\\
\midrule
$k = 4$ &2,5,8&2,3,4&3,4,14&3,4,5,6,7,8&4,5,6,8\\
\midrule
$k = 5$ &3,5,9&6,7,8&4,5,6&6,7,8,9,10,11&6,8,9,10\\
\midrule
$k = 6$ &3,6,9&10,11,12&6,7,14&7,8,9,10,11,12&8,9,10,12\\
\midrule
$k = 7$ &6,7,10&2,3,11,12&3,7,11,14,15,16&2,3,7,10,11,12&2,10,12\\
\midrule
$k = 8$ &4,7,10&3,4,6,7&7,8,15&2,3,4,7,11,12&2,4,12\\
\midrule
$k = 9$ &4,8,10&7,8,10,11&8,9,10&2,3,4,6,7,11&2,4,6\\
\midrule
$k = 10$ &5,8,10&3,7,11&10,11,15&3,4,6,7,8,11&4,6,8\\
\midrule
$k = 11$ &5,9,10&&3,7,11,13,15,16&3,6,7,8,10,11&6,8,10\\
\midrule
$k = 12$ &6,9,10&&11,12,13&3,7,8,10,11,12&8,10,12\\
\bottomrule
\end{tabular*}
\caption{The sets $\cH_d^k$ and $\cG_d^k$ from \eqref{eq:gid}, with $\cG_3^k:=\overline{\cG}_3^k\cup\{3,7,11,13,14,15,16\}$}\label{tab:hg}
\end{table}

For $d=0,1,2$, it is easily verified that each $\cG_d^k$ contains $\nu_d = (d+1)(d+2)/2 = \dim \PP_d(\RR^2)$ elements. Hence $\vg_d^k=[i_1,\ldots,i_{\nu_d}]^\rmT$ with $i_1\le\cdots\le i_{\nu_d}$ (cf. Table \ref{tab:dualpolynomials}). Also note that
\begin{equation}\label{eq:gh}
\cG_1^k=\cH_1^k \qquad\text{and}\qquad \cG_2^k=\cH_2^{i_1}\cup\cH_2^{i_2}\cup\cH_2^{i_3},\quad [i_1,i_2,i_3]=\vg_1^k,\quad k=1,\ldots,12.
\end{equation}
For $d=0,1,2,3$, let $S_{j,d}^k$ be the polynomial representing $S_{j,d}$ on $\PS_k$, and let
\[ \vs_d^k=[S_{1,d}^k,\ldots,S_{n_d,d}^k]^\rmT,\qquad \overline{\vs}_d^k=\vs_d^k(\vg_d^k), \]
which represents the (ordered) vector whose elements form the set
$\cS_d^k:=\{S_{j,d}^k:j\in \cG_d^k\}$.
Next, for $1\le k\le 12$, define submatrices
$$\mR_1^k:=\mR_1(k,\vg_1^k)\in\RR^{1,3},\qquad
\mR_2^k:=\mR_2(\vg_1^k,\vg_2^k)\in\RR^{3,6},\qquad
\mR_3^k:=\mR_3(\vg_2^k,\vg_3^k)\in\RR^{6,\eta_k}$$
where $\vg_d^k$ is defined in \eqref{eq:gid}, $\eta_1 = \cdots = \eta_6 = 11$ and $\eta_7 = \cdots = \eta_{12} = 10$.

\begin{example}
Since $\vg_1^1=[1,6,7]$ and $\vg_2^1=[1,2,3,10,11,12]$ as in Table~\ref{tab:hg},
\[ \mR_1^1(\vx)\mR_2^1(\vx)=
\begin{bmatrix}\gamma_1&2\beta_{3,2}&4\beta_2\end{bmatrix}
\begin{bmatrix} \gamma_1 & 2\beta_2 & 0 & 0 & 0 & 2\beta_3\\
0 & 0 & 0 & \beta_{3,2} & 3\beta_2 & \beta_{1,2}\\
0 & \frac{\beta_{1,3}}{2} & \frac{3\beta_2}{2} & 0 & \frac{3\beta_3}{2} & \frac{\gamma_{1,2}}{2}
\end{bmatrix}.
\]
\end{example}

We are now ready to state the polynomial version of Corollary~\ref{cor:BRR}.

\begin{corollary}\label{cor:fpol}
For $d=0,1,2,3$, $k=1,\ldots,12$, coefficient vector $\vc\in \RR^{n_d}$ with subvector $\vc(\vg_d^k)$, $F_d = \vs_d^\rmT \vc \in \SS_d(\PSB)$, and $F_d^k = F_d|_{\PSsmall_k}\in \PP_d(\RR^2)$,
\begin{equation}\label{eq:b12pol}
\vs_d^{k\,\rmT} = \mR_1^k\cdots \mR_d^k,\qquad F_d^k = \mR_1^k\cdots \mR_d^k\vc(\vg_d^k).
\end{equation}
\end{corollary}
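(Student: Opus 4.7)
The plan is to establish the two formulas in \eqref{eq:b12pol} by induction on $d$, starting from Corollary~\ref{cor:BRR} and reducing to the relevant submatrices by using that many basis functions vanish identically on $\PS_k$. In this corollary, $\vs_d^{k\,\rmT}$ should be read as the restriction of the row vector to the entries indexed by $\vg_d^k$, i.e.\ as $\overline{\vs}_d^{k\,\rmT}$, since the matrix product $\mR_1^k\cdots\mR_d^k$ has exactly $|\cG_d^k| = \nu_d$ (respectively $\eta_k$ for $d=3$) columns.

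First I would prove the first formula $\overline{\vs}_d^{k\,\rmT} = \mR_1^k\cdots \mR_d^k$ by induction on $d$. The base case $d=0$ is immediate: by \eqref{eq:S0}, $S_{j,0}^k=\delta_{jk}$, so $\overline{\vs}_0^{k\,\rmT}=[1]$, which matches the empty product (equal to $1$ by the convention of Section~\ref{sec:notation}). For the inductive step, restrict the identity $\vs_d^\rmT = \vs_{d-1}^\rmT\mR_d$ from \eqref{eq:bdR} to $\PS_k$ to obtain $(\vs_d^k)^\rmT = (\vs_{d-1}^k)^\rmT\mR_d$. Since $S_{i,d-1}^k=0$ whenever $i\notin \cG_{d-1}^k$, the left multiplication only sees the rows indexed by $\vg_{d-1}^k$, giving
\[
(\vs_d^k)^\rmT = \overline{\vs}_{d-1}^{k\,\rmT}\,\mR_d(\vg_{d-1}^k,:).
\]
Likewise $S_{j,d}^k=0$ whenever $j\notin \cG_d^k$, so selecting the columns indexed by $\vg_d^k$ and using the definition $\mR_d^k = \mR_d(\vg_{d-1}^k,\vg_d^k)$ yields
\[
\overline{\vs}_d^{k\,\rmT} = \overline{\vs}_{d-1}^{k\,\rmT}\,\mR_d^k = \mR_1^k\cdots \mR_{d-1}^k\,\mR_d^k,
\]
using the inductive hypothesis. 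The second formula is then immediate: splitting $F_d^k = \sum_{j=1}^{n_d} S_{j,d}^k\,c_j$ according to membership of $\cG_d^k$ kills all terms with $j\notin \cG_d^k$, so $F_d^k = \overline{\vs}_d^{k\,\rmT}\,\vc(\vg_d^k) = \mR_1^k\cdots\mR_d^k\,\vc(\vg_d^k)$.

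There is no real obstacle here; the whole argument reduces to careful bookkeeping with the index sets $\cG_d^k$, $\cH_d^k$ and $\vg_d^k$ defined in \eqref{eq:gid}. The only substantive ingredient is the vanishing $S_{j,d}|_{\PSsmall_k}=0$ for $j\notin \cG_d^k$, which holds by the very definition of $\cG_d^k$. Note in particular that it is not necessary to verify the combinatorial identity $\cG_d^k = \bigcup_{i\in \cG_{d-1}^k}\cH_d^i$ (a generalisation of \eqref{eq:gh}) in order to deduce \eqref{eq:b12pol}; any spurious nonzero columns of $\mR_d(\vg_{d-1}^k,:)$ outside $\vg_d^k$ would simply contribute to entries $S_{j,d}^k$ that already vanish, and hence drop out when we restrict to the columns of interest.
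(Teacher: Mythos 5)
Your proof is correct and in substance the same as the paper's: both start from the recurrence (Corollary \ref{cor:BRR}, equivalently Theorem \ref{thm:recrel} restricted to $\PS_k$) and discard the row/column indices outside $\cG_{d-1}^k$, $\cG_d^k$, where the splines --- and hence the corresponding entries --- vanish on $\PS_k$, so that the full products collapse to the submatrices $\mR_1^k\cdots\mR_d^k$. The only cosmetic differences are that you package the cases $d=1,2,3$ as a single induction where the paper expands the products explicitly, and your reading of $\vs_d^{k\,\rmT}$ as the restricted vector $\overline{\vs}_d^{k\,\rmT}$ is indeed the intended interpretation.
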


\begin{proof}
Clearly $\vs_0^k = 1$ and $F_0^k =\vc(k)$, showing the result for $d=0$.
By Corollary \ref{cor:BRR},
\[ S_{j,1}^k=\mR_1(k,j),\quad j=1,\ldots,10, \]
and  \eqref{eq:b12pol} follows for $d=1$.
Now
\begin{align*} S_{j,2}^k&=\ve_k^\rmT\mR_1\mR_2(:,j)=\sum_{i=1}^{10}\mR_1(k,i)\mR_2(i,j)\\
&=\sum_{i\in\cG_1^k}\mR_1(k,i)\mR_2(i,j)=\mR_1(k,\vg_1^k)\mR_2(\vg_1^k,j), & j=1,\ldots,12,\\
 S_{j,3}^k&=\ve_k^\rmT\mR_1\mR_2\mR_3(:,j)=\sum_{m=1}^{12}\sum_{l=1}^{10}\mR_1(k,l)\mR_2(l,m)\mR_3(m,j)\\
&=\mR_1(k,\vg_1^k)\mR_2(\vg_1^k,\vg_2^k)\mR_3(\vg_2^k,j), & j=1,\ldots,16.
\end{align*}
Hence \eqref{eq:b12pol} follows for $d=2,3$ as well.
\end{proof}

\begin{remark}
For the alternative bases $\tilde{\vs}_2$ (resp. $\tilde{\vs}_3$) the set $\cH^k_2$ (resp. $\cH^k_3$) needs to be recomputed from the modified recursion matrices $\tilde{\mR}_2$ (resp. $\tilde{\mR}_3$). The splines in $\tilde{\vs}_3$ and $\vs_3$ have identical support, so that they can be evaluated by slicing their recursion matrices using the same index vectors. In the quadratic case, $\tilde{S}_{3,2}, \tilde{S}_{7,2}, \tilde{S}_{11,2}$ have full support, as opposed to the trapezoidal support of $S_{3,2}, S_{7,2}, S_{11,2}$. Hence $\cG^1_2, \cG^2_2$ (resp. $\cG^3_2, \cG^4_2$, resp. $\cG^5_2, \cG^6_2$) need to be augmented by $\{7\}$ (resp. $\{11\}$, resp. $\{3\}$).
\end{remark}

\subsection{Derivatives}\label{sec:differentiation}
Analogous to the evaluation procedure for splines expressed in an S-basis, this section presents a formula and evaluation procedure for their (higher-order) directional derivatives (Property P6). This is achieved by applying the Leibniz rule to \eqref{eq:sdr}, and making use of special properties of the recursion matrices $\mR_i$, made precise in the following two lemmas. As before, we consider barycentric coordinates $\bfbeta$ and directional coordinates $\bfalpha$ with respect to a triangle $\PS = [\bfp_1,\bfp_2,\bfp_3]$.

\begin{lemma}\label{lem:HigherOrderDerivatives}
Let $m\geq 1$ and $f\in C^m(\cU)$, where $\cU\subset \RR^2$ is a region and $\bfx\in \cU$ with barycentric coordinates $\vbeta = (\beta_1,\beta_2,\beta_3)$. For $i = 1,\ldots, m$, consider vectors $\bfu_i\in\RR^2$ with directional coordinates $\valpha_i=(\alpha^i_1,\alpha^i_2,\alpha^i_3)$. Then
\begin{equation}\label{eq:HigherOrderDerivatives}
D_{\bfu_m}\cdots D_{\bfu_1} f = \sum_{i_1 = 1}^3 \cdots \sum_{i_m = 1}^3 \alpha^1_{i_1} \cdots \alpha^m_{i_m} \frac{\partial^m f}{\partial \beta_{i_1} \cdots \partial \beta_{i_m}}.
\end{equation}
Moreover, with $\bfalpha_1, \bfalpha_2$ directional coordinates of $\bfe_1, \bfe_2$, 
\begin{align}\label{eq:HigherOrderDerivativesPartial}
\frac{\partial^m f}{\partial x_1^{m_1} \partial x_2^{m_2}}
& = \sum_{|\bfi| = m} \sum_{\bfi_1 + \bfi_2 = \bfi} {|\bfi_1| \choose \bfi_1} {|\bfi_2| \choose \bfi_2} \bfalpha_1^{\bfi_1}  \bfalpha_2^{\bfi_2} \frac{\partial^{|\bfi|}}{\partial \bfbeta^{\bfi}} f,
\end{align}
where we used standard multi-index notation.
\end{lemma}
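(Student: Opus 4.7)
The plan is to prove \eqref{eq:HigherOrderDerivatives} by induction on $m$ and then derive \eqref{eq:HigherOrderDerivativesPartial} as a corollary. To make sense of the barycentric partial derivatives, I would first point out that for any $f\in C^m(\cU)$, the composition $F(\beta_1,\beta_2,\beta_3) := f(\beta_1\vp_1+\beta_2\vp_2+\beta_3\vp_3)$ is a $C^m$ function on an open subset of $\RR^3$, and that $\partial^m f/\partial\beta_{i_1}\cdots\partial\beta_{i_m}$ is to be interpreted as the corresponding unconstrained partial derivative of $F$ evaluated at $\vbeta$.

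For the base case $m=1$, observe that by the chain rule, $\partial F/\partial\beta_i(\vbeta)=\nabla f(\vx)\cdot\vp_i$. Then, using the definition \eqref{eq:dircoef} of directional coordinates,
\[
D_\bfu f(\vx) = \nabla f(\vx)\cdot\bfu = \nabla f(\vx)\cdot\sum_{i=1}^3\alpha_i\vp_i = \sum_{i=1}^3\alpha_i\frac{\partial F}{\partial\beta_i}(\vbeta),
\]
which is exactly \eqref{eq:HigherOrderDerivatives} for $m=1$. For the inductive step, assume \eqref{eq:HigherOrderDerivatives} holds at order $m-1$. Each mixed partial $\partial^{m-1}f/\partial\beta_{i_1}\cdots\partial\beta_{i_{m-1}}$ is itself a $C^1$ function on $\cU$, and applying the base case to it with direction $\bfu_m$ and directional coordinates $\valpha_m$ gives
\[
D_{\bfu_m}\frac{\partial^{m-1} f}{\partial\beta_{i_1}\cdots\partial\beta_{i_{m-1}}} = \sum_{i_m=1}^3\alpha^m_{i_m}\frac{\partial^m f}{\partial\beta_{i_1}\cdots\partial\beta_{i_m}},
\]
where we use that the $\beta$-partials commute for $C^m$ functions. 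Substituting this into the inductive hypothesis and using linearity of $D_{\bfu_m}$ yields \eqref{eq:HigherOrderDerivatives} at order $m$.

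For \eqref{eq:HigherOrderDerivativesPartial}, note that $\partial^m f/\partial x_1^{m_1}\partial x_2^{m_2} = D_{\bfe_1}^{m_1}D_{\bfe_2}^{m_2} f$, since the order of ordinary partial derivatives is immaterial. Applying \eqref{eq:HigherOrderDerivatives} with $m_1$ copies of $\bfe_1$ (with directional coordinates $\valpha_1$) followed by $m_2$ copies of $\bfe_2$ (with directional coordinates $\valpha_2$) expands this into a sum over sequences of indices in $\{1,2,3\}^m$. Now group terms by the total multi-index $\bfi=(i_1,i_2,i_3)\in\mathbb{N}_0^3$ with $|\bfi|=m$ of the barycentric partials: a given $\bfi$ arises from a splitting $\bfi=\bfi_1+\bfi_2$ where $\bfi_1$ counts the contribution from the $m_1$ factors $\valpha_1$ and $\bfi_2$ from the $m_2$ factors $\valpha_2$, which forces $|\bfi_1|=m_1$ and $|\bfi_2|=m_2$. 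The number of ordered index sequences realizing such a splitting is the multinomial coefficient $\binom{|\bfi_1|}{\bfi_1}\binom{|\bfi_2|}{\bfi_2}$, and each such sequence contributes the monomial $\valpha_1^{\bfi_1}\valpha_2^{\bfi_2}$. Collecting yields \eqref{eq:HigherOrderDerivativesPartial}.

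The only slightly delicate point is the notational one about interpreting $\partial/\partial\beta_i$ as a genuine unconstrained partial derivative of the lifted function $F$ on $\RR^3$; once that convention is fixed, the argument is a routine induction together with a multinomial bookkeeping. No deep obstacle is expected.
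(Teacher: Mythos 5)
Your proposal is correct and follows essentially the same route as the paper: the first-order formula is obtained from the barycentric lift (the paper differentiates $f\big((\beta_1+t\alpha_1)\vp_1+(\beta_2+t\alpha_2)\vp_2+(\beta_3+t\alpha_3)\vp_3\big)$ at $t=0$, which is your chain-rule computation), the higher-order identity comes from iterating the commuting operators $\sum_i\alpha_i^j\partial/\partial\beta_i$ (phrased in the paper as polynomial arithmetic of commuting differential polynomials, in your write-up as an induction on $m$), and the second identity follows from the same multinomial counting that the paper performs by applying the multinomial theorem twice. Your explicit remark on interpreting $\partial/\partial\beta_i$ via the lifted function $F$ on $\RR^3$ is a welcome clarification of a convention the paper leaves implicit, but it does not change the substance of the argument.
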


\begin{proof}
With $\bfx = \bfx(\beta_1, \beta_2, \beta_2) = \beta_1(\bfx)\bfp_1 + \beta_2(\bfx)\bfp_2 + \beta_3(\bfx)\bfp_3$, we can consider $f(\bfx) = f\big(\bfx(\beta_1, \beta_2, \beta_2)\big)$ as a function of $\beta_1,\beta_2,\beta_3$.

For any $t\in \RR$ and $j = 1,\ldots,m$, the barycentric coordinates of $\bfx + t\bfu_j$ are $\bfbeta + t\bfalpha_j$, implying
\begin{align}\label{eq:HigherOrderDerivatives1}
D_{\bfu_j} f(\vx)&=\left.\frac{\rmd}{\rmd t}f\big(
(\beta_1+t\alpha^j_1)\vp_1+
(\beta_2+t\alpha^j_2)\vp_2+
(\beta_3+t\alpha^j_3)\vp_3\big)\right|_{t=0}\\ \notag
&=\alpha^j_1\frac{\partial f}{\partial \beta_1}
 +\alpha^j_2\frac{\partial f}{\partial \beta_2}
 +\alpha^j_3\frac{\partial f}{\partial \beta_3}.
\end{align}
Hence the action of $D_{\bfu_j}$ on $f$ is that of the differential polynomial 
\begin{equation}\label{eq:DifferentialPolynomial}
  \alpha^j_1\frac{\partial }{\partial \beta_1}
 +\alpha^j_2\frac{\partial }{\partial \beta_2}
 +\alpha^j_3\frac{\partial }{\partial \beta_3},\qquad j = 1,\ldots, m.
\end{equation}
Since these differential polynomials commute, we can apply polynomial arithmetic to compute their product, and thus arrive at \eqref{eq:HigherOrderDerivatives}.

Next consider the standard basis vectors $\bfe_1$ and $\bfe_2$, with corresponding directional coordinates $\bfalpha_1 = (\alpha_1^1, \alpha_2^1, \alpha_3^1)$ and $\bfalpha_2 = (\alpha_1^2, \alpha_2^2, \alpha_3^2)$, and let $\{\bfu_1,\ldots, \bfu_m\} = \{\bfe_1^{m_1}\, \bfe_2^{m_2}\}$ as multisets. Then, taking the product of \eqref{eq:DifferentialPolynomial} in this case and applying the multinomial theorem twice,
\begin{align*}
\frac{\partial^m f}{\partial x_1^{m_1} \partial x_2^{m_2}}
& = \left(\sum_{|\bfi_1| = m_1} {|\bfi_1| \choose \bfi_1} \bfalpha_1^{\bfi_1} \frac{\partial^{|\bfi_1|} }{\partial \bfbeta^{\bfi_1}} \right) \left(\sum_{|\bfi_2| = m_2} {|\bfi_2| \choose \bfi_2} \bfalpha_2^{\bfi_2} \frac{\partial^{|\bfi_2|} }{\partial \bfbeta^{\bfi_2}} \right) f,
\end{align*}
from which \eqref{eq:HigherOrderDerivativesPartial} follows.
\end{proof}

\begin{lemma}\label{lem:differentiation}
For any $\vx,\vy,\vu\in\RR^2$ and $i=1,2$,
\begin{align}
\mR_i(\vx)\mR_{i+1}(\vy)   & = \mR_i(\vy)\mR_{i+1}(\vx),\label{eq:symxy}\\
(D_\vu\mR_i)\mR_{i+1}(\vx) & = \mR_i(\vx)(D_\vu\mR_{i+1}). \label{eq:symdx}
\end{align}
\end{lemma}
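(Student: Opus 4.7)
The plan is to reduce \eqref{eq:symdx} to \eqref{eq:symxy} by differentiation, and to establish \eqref{eq:symxy} by direct entrywise verification, drastically pruned by $\cG$-invariance and by the sparsity of the recursion matrices.

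First I would observe that \eqref{eq:symdx} is an immediate consequence of \eqref{eq:symxy}. Since every entry of $\mR_i$ and $\mR_{i+1}$ is linear in the barycentric coordinates \eqref{eq:gammabetasigma}, each side of \eqref{eq:symxy} depends on $\vy$ only through a single affine factor. Applying $D_\vu$ with respect to the $\vy$-variable on both sides yields $\mR_i(\vx)(D_\vu\mR_{i+1}) = (D_\vu\mR_i)\mR_{i+1}(\vx)$, which is exactly \eqref{eq:symdx}.

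For \eqref{eq:symxy}, each entry on either side is a polynomial of bidegree $(1,1)$ in $\big(\vbeta(\vx),\vbeta(\vy)\big)$, so the matrix identity reduces to a finite family of scalar identities
\[ \sum_\ell \mR_i(k,\ell)(\vx)\,\mR_{i+1}(\ell,j)(\vy) = \sum_\ell \mR_i(k,\ell)(\vy)\,\mR_{i+1}(\ell,j)(\vx). \]
Two features make this manageable. \emph{First}, by the $\cG$-invariance of the S-bases and the resulting equivariance of the $\mR_d$ under the dihedral action that permutes the subtriangles $\PS_k$ and the basis splines, it suffices to verify the identity for a single representative $(k,j)$ per $\cG$-orbit. \emph{Second}, by the sparsity of $\mR_i,\mR_{i+1}$ (reflecting the local support of the S-splines), the sum over $\ell$ typically contains only one or two nonzero products. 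Each resulting scalar identity is then settled by rewriting $\gamma_j,\beta_{j,k},\sigma_{i,j}$ linearly in $\vbeta$ via \eqref{eq:gammabetasigma} and collecting terms, using $\beta_1+\beta_2+\beta_3=1$ to cancel constants.

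The main obstacle is the case $i=2$, where $\mR_2\mR_3$ is a $10\times 16$ product and the interior columns of $\mR_3$ (those with entries $\sigma_{i,j}/3$, $\beta_j/3$) mix contributions from several subtriangles simultaneously, producing cross-terms whose cancellation is not transparent. Conceptually, the identity expresses blossoming: the bi-affine matrix $\mR_i(\vx)\mR_{i+1}(\vy)$ is, entrywise, the unique symmetric bi-affine polar form of the corresponding degree-$2$ polynomial piece of $\vs_{i+1}$ on $\PS_k$, so symmetry in $(\vx,\vy)$ is forced by uniqueness once one recognizes the product as a two-step insertion algorithm for that polar form.
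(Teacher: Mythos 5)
Your approach matches the paper's: \eqref{eq:symdx} is obtained from \eqref{eq:symxy} by differentiating the bi-affine identity (the paper differentiates in $\vx$ and then sets $\vy=\vx$; your differentiation in $\vy$ is equivalent since the derivative matrices are constant), and \eqref{eq:symxy} itself is settled by direct verification, which the paper handles by citing \cite{Cohen.Lyche.Riesenfeld13} for $i=1$ and a symbolic check for $i=2$. One caution: your closing blossoming remark cannot replace that verification, because uniqueness of the polar form applies only to \emph{symmetric} bi-affine forms, so recognizing $\mR_i(\vx)\mR_{i+1}(\vy)$ as the polar form presupposes exactly the symmetry being proved.
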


\begin{proof}
Fix $\vx,\vy\in\RR^2$ with barycentric coordinates $\vbeta^\vx$ and $\vbeta^\vy$, respectively. Equation \eqref{eq:symxy} will follow from 
\begin{equation}\label{eq:symto}
\mR_i(\vbeta^\vx)\mR_{i+1}(\vbeta^\vy)=
\mR_i(\vbeta^\vy)\mR_{i+1}(\vbeta^\vx),\qquad i=1,2.
\end{equation}
For $i=1$ this was proved in \cite{Cohen.Lyche.Riesenfeld13}. For $i=2$ it was checked symbolically in the Jupyter notebook. Taking the derivative with respect to $\vx$ on both sides of \eqref{eq:symxy} and setting $\vy=\vx$ we obtain \eqref{eq:symdx}.
\end{proof}

Note that, for fixed $\vu$, the matrices $D_\vu\mR_i$ and $D_\vu\mR_{i+1}$ are constant. In fact, with $\valpha=(\alpha_1,\alpha_2,\alpha_3)$ the directional coordinates of $\vu$, Lemma \ref{lem:HigherOrderDerivatives} implies
\begin{equation}\label{eq:cubicdiff3}
\mU_{d,\vu}:=D_\vu\mR_d(\vbeta)=\alpha_1\frac{\partial\mR_d(\vbeta)}{\partial\beta_1}
+\alpha_2\frac{\partial\mR_d(\vbeta)}{\partial\beta_2}
+\alpha_3\frac{\partial\mR_d(\vbeta)}{\partial\beta_3},\qquad d = 1,2,3.
\end{equation}
From the definition \eqref{eq:gammabetasigma} of $\gamma_j := 2\beta_j - 1$, $\beta_{i,j} := \beta_i - \beta_j$, and $\sigma_{i,j} := \beta_i + \beta_j$, it follows that 
\[
\frac{\partial \gamma_j}{\partial\beta_k} = 2\delta_{k,j}, \qquad
\frac{\partial \beta_{i,j}}{\partial\beta_k} = \delta_{k,i}-\delta_{k,j},\qquad
\frac{\partial \sigma_{i,j}}{\partial\beta_k} = \delta_{k,i}+\delta_{k,j}.
\]
Hence one obtains the matrix $\mU_{d,\vu}$ from $\mR_d$ by replacing
\[
\beta_j\mapsto \alpha_j, \qquad \beta_{i,j}\mapsto \alpha_{i,j} := \alpha_i - \alpha_j, \qquad \gamma_j \mapsto 2\alpha_j,\qquad \sigma_{i,j}\mapsto \tau_{i,j} := \alpha_i + \alpha_j.
\]
Analogous to the recursive evaluation \eqref{eq:sdr} of the value of $\vs_d$, there exist recursive formulas for its directional derivatives. 

\begin{theorem}\label{thm:cubicdiff}
For any point $\vx\in \RR^2$ and direction vectors $\vu, \vv, \vw\in \RR^2$,
\begin{align}
D_\vu\big(\mR_1(\vx)\mR_2(\vx)\mR_3(\vx)\big) & = 3\mR_1(\vx)\mR_2(\vx)\mU_{3,\vu},\label{eq:cubicdiff}\\
D_{\vv}D_{\vu}\big(\mR_1(\vx)\mR_2(\vx)\mR_3(\vx)\big) & = 6\mR_1(\vx)\mU_{2,\vv}\mU_{3,\vu},\label{eq:diff2}\\
D_{\vw}D_{\vv}D_{\vu}\big(\mR_1(\vx)\mR_2(\vx)\mR_3(\vx)\big) & = 6\mU_{1,\vw}\mU_{2,\vv}\mU_{3,\vu},\label{eq:diff1}
\end{align}
where in \eqref{eq:diff1} we assume that $\vx$ is not on a knot line of $\PSB$.
\end{theorem}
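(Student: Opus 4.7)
The plan is to apply the Leibniz rule to the triple matrix product and repeatedly invoke the commutation identity \eqref{eq:symdx} of Lemma~\ref{lem:differentiation}, deriving the three identities in sequence.

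For \eqref{eq:cubicdiff}, I would expand $D_\vu(\mR_1\mR_2\mR_3)$ via the Leibniz rule into three terms, then use \eqref{eq:symdx} in the form $\mU_{i,\vu}\mR_{i+1} = \mR_i\mU_{i+1,\vu}$ to shift each $\mU$ to the rightmost position. All three terms collapse to $\mR_1\mR_2\mU_{3,\vu}$, producing the factor $3$.

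For \eqref{eq:diff2}, I would differentiate the identity just obtained, noting that $\mU_{3,\vu}$ is constant, so Leibniz gives only two nonzero terms; one application of \eqref{eq:symdx} reduces these to a common form $\mR_1\mU_{2,\vv}\mU_{3,\vu}$, doubling the factor to $6$. For \eqref{eq:diff1}, one more directional differentiation eliminates the last $\mR_1$ (since $\mU_{2,\vv}\mU_{3,\vu}$ is constant), leaving the single term $6\mU_{1,\vw}\mU_{2,\vv}\mU_{3,\vu}$.

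The caveat that $\vx$ avoid knot lines of $\PSB$ in \eqref{eq:diff1} concerns the spline interpretation rather than the matrix identity itself: the product $\mR_1\mR_2\mR_3$ is a polynomial of total degree $3$ in $\vx$ and is smooth everywhere, but when its $k$-th row is read as the restriction of $\vs_3^\rmT$ to $\PS_k$ via $\ve_k^\rmT\mR_1\mR_2\mR_3$, the choice of index $k$ is only unambiguous in the interior of a subtriangle, while the spline itself is only globally $C^2$. I do not expect a substantial obstacle; once Lemma~\ref{lem:differentiation} is in hand, the whole derivation is a mechanical application of the Leibniz rule and the commutation identity \eqref{eq:symdx}.
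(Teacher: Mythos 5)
Your proposal is correct and follows essentially the same route as the paper: expand via the Leibniz rule and repeatedly apply the commutation identity \eqref{eq:symdx} to push the constant matrices $\mU_{d,\vu}$ to the right, with the factors $3$ and $6$ arising exactly as you describe, and with the remark that \eqref{eq:diff1} follows by one further differentiation of the constant-coefficient expression. Your reading of the knot-line caveat as pertaining to the spline interpretation via $\ve_k^\rmT\mR_1\mR_2\mR_3$ (the spline being only $C^2$) rather than to the polynomial matrix identity itself is also consistent with the paper.
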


\begin{proof}
By the product rule
$$ D_{\vu}(\mR_1\mR_2\mR_3)=(D_\vu\mR_1)\mR_2\mR_3 +\mR_1(D_\vu\mR_2)\mR_3+\mR_1\mR_2(D_\vu\mR_3). $$
Using \eqref{eq:symdx} repeatedly we obtain \eqref{eq:cubicdiff}. Differentiating \eqref{eq:cubicdiff} using the product rule, applying \eqref{eq:symdx} and that $D_{\vv}\mU_{3,\vu}=0$, we obtain
\begin{align*}
D_{\vv}D_{\vu}\big(\mR_1\mR_2\mR_3\big)&=3\big((D_{\vv}\mR_1)\mR_2\mU_{3,\vu}+\mR_1(D_{\vv}\mR_2)\mU_{3,\vu}\big)\\ &=6\mR_1(\vx)(D_{\vv}\mR_2)\mU_{3,\vu},
\end{align*}
and \eqref{eq:diff2} follows. The proof of \eqref{eq:diff1} is similar.
\end{proof}

\begin{table}
\begin{tabular*}{\columnwidth}{@{ }@{\extracolsep{\stretch{1}}}*{8}{c}}
\toprule
$i$ & $\sigma_i$ & $S_{\sigma_i,3}$
  & $S_{\sigma_i,3}|_e$
  & $\frac13 D_{(\alpha_1,\alpha_3,\alpha_3)} S_{\sigma_i,3}|_e$
  & $\frac{1}{3\cdot 2} D^2_{(\alpha_1,\alpha_3,\alpha_3)} S_{\sigma_i,3}|_e$\\
\midrule
1 & 1 & $\frac14$ \SimS{400101}
  & $B_1^3$
  & $2\alpha_1 B_1^2$
  & $4\alpha_1^2 B_1^1$\\
2 & 2 & $\frac12$ \SimS{310101}
  & $B_2^3$
  & $2\alpha_2 B_1^2 + \alpha_{13} B_2^2$
  & $2\alpha_2 (4\alpha_1 + \alpha_2) B_1^1 + \alpha_{13}^2 B_2^1$\\
3 & 3 & \phantom{$\frac{1}{1}$}\SimS{221100}
  & $  B_3^3$
  & $\alpha_2 B_2^2 + \alpha_1 B_3^2$
  & $2\alpha_2^2 B_1^1 + 2\alpha_1\alpha_2 B_2^1 + 2\alpha_1^2 B_3^1$\\
4 & 4 & $\frac12$ \SimS{130110}
  & $B_4^3$
  & $\alpha_{23} B_3^2 + 2\alpha_1 B_4^2$
  & $\alpha_{23}^2 B_2^1 + 2\alpha_1 (\alpha_1 + 4\alpha_2) B_3^1$  \\
5 & 5 & $\frac14$ \SimS{040110}
  & $B_5^3$
  & $2\alpha_2 B_4^2$
  & $4\alpha_2^2 B_3^1$\\
\midrule
6 & 12 & $\frac12$ \SimS{301101}
  & 0
  & $2\alpha_3 B_1^2$
  & $2\alpha_3(4\alpha_1 + \alpha_3) B_1^1$\\
7 & 13 & \phantom{$\frac{1}{1}$} \SimS{211101}
  & 0
  & $2\alpha_3 B_2^2$
  & $8\alpha_2\alpha_3 B_1^1 + 2\alpha_3 (3\alpha_1 + \alpha_2) B_2^1$\\
8 & 14 & \phantom{$\frac{1}{1}$} \SimS{121110}
  & 0
  & $2\alpha_3 B_3^2$
  & $2\alpha_3(3\alpha_2 + \alpha_1)B_2^1 + 8\alpha_1\alpha_3 B_3^1$\\
9 & 6 & $\frac12$ \SimS{031110}
  & 0
  & $2\alpha_3 B_4^2$
  & $2\alpha_3(4\alpha_2 + \alpha_3)B_3^1$\\
\midrule
10 & 11 & \phantom{$\frac{1}{1}$} \SimS{212001}
   & 0
   & 0
   & $2\alpha_3^2 B_1^1 + \alpha_3^2 B_2^1$ \\
11 & 16 & $\frac14$ \SimS{111111}
   & 0
   & 0
   & $\alpha_3^2 B_2^1$ \\
12 & 7 & \phantom{$\frac{1}{1}$} \SimS{122010}
   & 0
   & 0
   & $\alpha_3^2 B_2^1 + 2\alpha_3^2 B_3^1$ \\
\midrule
13 & 15 & \phantom{$\frac{1}{1}$} \SimS{112011}
   & 0
   & 0
   & 0\\
14 & 10 & $\frac12$ \SimS{103011}
   & 0
   & 0
   & 0\\
15 & 8  & $\frac12$ \SimS{013011}
   & 0
   & 0
   & 0\\
16 & 9  & $\frac14$ \SimS{004011}
   & 0
   & 0
   & 0\\
\bottomrule
\end{tabular*}
\caption{With $\sigma_i$ the reordering \eqref{eq:reordering}, restrictions of $S_{\sigma_i,3}$ and its directional derivatives to $e = [\bfp_1, \bfp_2]$ are expressed as linear combinations of the univariate B-splines $B^d_1,\ldots,B^d_{d+2}$.}\label{tab:derivative_restrictions}
\end{table}

Splines in $\SS_3(\PSB)$, and their directional derivatives of order $k$, restrict to univariate $C^{2-k}$-smooth splines of degree $3-k$ on each boundary edge, with a single knot at the midpoint. Hence they can, after a reparametrization \eqref{eq:restriction}, be expressed as linear combinations of the univariate B-splines $B^d_1,\ldots,B^d_{d+2}$ on the open knot multiset $\{0^{d+1}\, 0.5^1\, 1^{d+1}\}$; see Table \ref{tab:derivative_restrictions}. Here the directional derivatives $D_\vu$ are written in terms of the directional coordinates $\alpha_1,\alpha_2,\alpha_3$ of $\vu$ with respect to the triangle $\PS = [\bfp_1,\bfp_2,\bfp_3]$.

\begin{example}
The directional coordinates of $\vu$ with respect to the triangles $\PS = [\bfp_1, \bfp_2, \bfp_3]$ and $\PS' = [\bfp_1, \bfp_4, \bfp_6]$ are
\[ \vu = \alpha_1\bfp_1 + \alpha_2\bfp_2 + \alpha_3\bfp_3
       = 2\alpha_1\bfp_1 + 2\alpha_2 \bfp_4 + 2\alpha_3 \bfp_6, \quad \alpha_1 + \alpha_2 + \alpha_3 = 0.\]
Repeatedly applying the differentiation formula \eqref{eq:Qdiff} with respect to $\Delta'$,
\[
\frac13 D_\vu \SimS{400101} = 2\alpha_1 \SimS{300101}, \qquad \frac{1}{3\cdot 2} D^2_\vu \SimS{400101} = 4\alpha_1^2 \SimS{200101}.
\]
When applying \eqref{eq:restriction}, the weight of $S_{1,3}$ cancels the ratio $\frac{\area(\PSsmall)}{\area([\cK])}$, yielding the first row in Table \ref{tab:derivative_restrictions}.
\end{example}

\section{Marsden identity and ensuing properties}\label{sec:Marsden}
In this section we derive and apply Marsden identities for the bases in \eqref{eq:SS-bases}, establishing Property P3. These identities imply polynomial reproduction, i.e., $\PP_d(\PS) \subset \SS_d(\PSB)$, yield the construction of quasi-interpolants, imply stability of the bases in the $L_\infty$ norm, and yield a bound for the distance between spline values and corresponding control points.

\subsection{Derivation of the Marsden identity}
To the vertices $\bfp_j$ of $\PSB$ we associate linear polynomials
\begin{equation}\label{eq:lj}
c_j = c_j(\vy) := 1-\vp_j^\rmT\vy\in \PP_1 ,\qquad j=1,\ldots,10,
\end{equation}
which satisfy, by \eqref{eq:p456} and \eqref{eq:p78910}, 
\begin{equation}\label{eq:l456}
c_4=\frac{c_1+c_2}{2},\qquad c_5=\frac{c_2+c_3}{2},\qquad c_6=\frac{c_3+c_1}{2},\qquad c_{10}=\frac{c_1+c_2+c_3}{3}.
\end{equation}
Table~\ref{tab:dualpolynomials} introduces \emph{dual polynomials} $\Psi_{j,d}$ as $d$-fold products of these linear polynomials. Writing the $k$th factor of $\Psi_{j,d}$ as $1 - \vp_{j,d,k}^\rmT\vy$, with $\bfp_{j,d,k} \in \{\vp_1,\ldots,\vp_{10}\}$ the \emph{dual points of degree $d$}, one obtains the explicit form
\begin{equation}\label{eq:dualp}
\Psi_{j,d}(\vy):=\prod_{k=1}^d(1-\bfp^{\rmT}_{j,d,k}\vy)\in \PP_d,\qquad j=1,\ldots,n_d,\qquad d=0,1,2,3,
\end{equation}
with $n_d$ the dimension in \eqref{eq:nd}. For each basis, the dual polynomials are assembled in a vector
\begin{equation}\label{eq:dalvectors}
\vpsi_d:=[\Psi_{1,d},\ldots,\Psi_{n_d,d}]^\rmT, \qquad d=0,1,2,3.
\end{equation}

Corresponding to the dual polynomials $\Psi_{j,d}$ (or basis functions $S_{j,d}$), we define the \emph{domain points}
\[ \bfxi_{j,d} := \frac{\bfp_{j,d,1} + \cdots + \bfp_{j,d,d}}{d},\qquad j = 1,\ldots,n_d, \]
as the averages of the corresponding dual points. Figure \ref{fig:domain_mesh} shows each set of domain points, symmetrically connected in a domain mesh. Note that each set of domain points satisfies Property P4.

\begin{theorem}\label{thm:dualrec}
For $\vx,\vy\in\RR^2$ we have
\begin{equation}\label{eq:dualrec}
\mR_d(\vx)\vpsi_d(\vy)=(1-\vx^\rmT\vy)\vpsi_{d-1}(\vy),\qquad d=1,2,3,
\end{equation}
where $\mR_d(\vx)$ is given by \eqref{eq:R1},\eqref{eq:R2},\eqref{eq:R3}.
\end{theorem}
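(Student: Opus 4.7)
The plan is to verify the vector identity \eqref{eq:dualrec} component by component, making heavy use of the dihedral symmetry $\cG$ and the sparsity of $\mR_d$. The cases $d=1,2$ are already established in \cite{Cohen.Lyche.Riesenfeld13}, so the genuinely new content is the cubic case $d=3$, involving the $12 \times 16$ matrix $\mR_3$ of \eqref{eq:R3}.

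First I would observe that both sides of \eqref{eq:dualrec} are $\cG$-equivariant: a symmetry $\sigma\in\cG$ permutes the rows of $\mR_d$ (together with the barycentric coordinates $\beta_j$), permutes the entries of $\vpsi_{d-1}(\vy)$, and simultaneously permutes the entries of $\vpsi_d(\vy)$ via its action on the vertices $\vp_j$ appearing in $c_j(\vy)=1-\vp_j^\rmT\vy$. Hence it suffices to check \eqref{eq:dualrec} for one row per $\cG$-orbit, reducing twelve identities to a handful. For each chosen representative row $i$, I would read off the nonzero pattern from \eqref{eq:R3} (equivalently, from $\cH_3^i$ in Table~\ref{tab:hg}) and expand the bilinear polynomial
\begin{equation*}
[\mR_3(\vx)\vpsi_3(\vy)]_i \;=\; \sum_{j\in\cH_3^i}\mR_3(i,j)\,\Psi_{j,3}(\vy),
\end{equation*}
substituting the factorizations of $\Psi_{j,3}(\vy)$ from Table~\ref{tab:dualpolynomials} and the midpoint identities \eqref{eq:l456}.

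The key algebraic fact driving every collapse is the bilinear relation $\beta_1 c_1+\beta_2 c_2+\beta_3 c_3 = 1-\vx^\rmT\vy$, which follows from $\beta_1+\beta_2+\beta_3=1$ and the definition \eqref{eq:lj} of $c_j$; this produces the scalar factor on the right-hand side of \eqref{eq:dualrec}. After factoring it out, what remains must match $\Psi_{i,2}(\vy)$, a product of two linear forms in $\vy$ that can be read directly from Table~\ref{tab:dualpolynomials}. I expect the main obstacle in the cubic case to be the bookkeeping for the interior rows $i\in\{3,7,11\}$, where $\mR_3$ has its widest support (six nonzero entries) and the dual polynomials $\Psi_{j,3}$ carry products of $c_4,c_5,c_6$ that must be re-expanded via \eqref{eq:l456} before the coefficients of each monomial $c_{i_1}c_{i_2}$ can be compared. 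The remaining rows then follow either by $\cG$-symmetry or by a direct analogue of the same pattern, and a symbolic verification in the spirit of the Jupyter notebook \cite{WebsiteGeorg} provides a redundant cross-check on each orbit representative.
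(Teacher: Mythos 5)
Your proposal matches the paper's own proof: both reduce \eqref{eq:dualrec} for $d=3$ to the row-wise identity $(\mR_3\vpsi_3)_i=(\beta_1c_1+\beta_2c_2+\beta_3c_3)\Psi_{i,2}$ via the bilinear relation $\beta_1c_1+\beta_2c_2+\beta_3c_3=1-\vx^\rmT\vy$, verify one representative row per $\cG$-orbit (the paper does $i=1,2,3$, with the widest row $i=3$ being the heaviest bookkeeping, exactly as you anticipate) by expanding against the factored dual polynomials of Table~\ref{tab:dualpolynomials} and collapsing with \eqref{eq:l456}, and dispatch the remaining rows by $\cG$-invariance, citing \cite{Cohen.Lyche.Riesenfeld13} for $d=1,2$. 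No substantive difference.
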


\begin{proof}
This holds for $d=1,2$ by Theorem~3.4 in \cite{Cohen.Lyche.Riesenfeld13}. Consider $d=3$. Let $\vx,\vy\in\RR^2$. Let $(\beta_1,\beta_2,\beta_3)$ be the barycentric coordinates of $\bfx$ with respect to $\PS$. From \eqref{eq:BarycentricCoordinates}, it follows
\[ \beta_1c_1(\vy) + \beta_2c_2(\vy) + \beta_3c_3(\vy) = 1 - \vx^\rmT\vy. \]
Thus, it is enough to show that 
\begin{equation}\label{eq:dualreci}
(\mR_3\vpsi_3)_i=
(\beta_1c_1+\beta_2c_2+\beta_3c_3)\Psi_{i,2},\qquad i=1\ldots,12.
\end{equation}
We verify this statement for $i=1,2,3$, by taking the product of the $i$th row of $\mR_3$ as in \eqref{eq:R3} with $\vpsi_3$ as in Table~\ref{tab:dualpolynomials}, which gives
\begin{align*}
(\mR_3\vpsi_3)_1 =\ & \gamma_1\Psi_{1,3}+2\beta_2\Psi_{2,3} + 2\beta_3\Psi_{12,3}
 =(2\beta_1-1)c_1^3+2\beta_2c_1^2 c_4 + 2\beta_3c_1^2c_6\\
\overset{\eqref{eq:l456}}{=} & \Big((\beta_1-\beta_2-\beta_3)c_1+\beta_2(c_1 +c_2) +\beta_3(c_1 +c_3)\Big)c_1^2
 =\big(\beta_1c_1+\beta_2c_2+\beta_3c_3\big)\Psi_{1,2},\\
(\mR_3\vpsi_3)_2 =\ & \beta_{1,3}\Psi_{2,3}+\beta_2\Psi_{3,3} +2\beta_3\Psi_{13,3}
 =(\beta_1-\beta_3)c_1^2c_4+\beta_2c_1 c_4c_2+2\beta_3c_1c_4c_6\\
\overset{\eqref{eq:l456}}{=}& \Big((\beta_1-\beta_3)c_1+\beta_2c_2+\beta_3(c_1 +c_3)\Big)c_1c_4
 =\big(\beta_1c_1+\beta_2c_2+\beta_3c_3\big)\Psi_{2,2},\\
3(\mR_3\vpsi_3)_3 =\ & \sigma_{1,2}\Psi_{3,3}+\beta_3\Psi_{7,3} +\beta_3\Psi_{11,3} + 2\beta_1\Psi_{13,3} + 2\beta_2\Psi_{14,3} + \beta_3\Psi_{16,3}\\
=\ & \beta_1\big(c_1c_4c_2+2c_1c_4c_6\big) + \beta_2\big(c_1c_4c_2+2c_2c_4c_5\big) + \beta_3\big(c_2c_5c_3+c_3c_6c_1+c_1c_2c_3\big)\\
\overset{\eqref{eq:l456}}{=}&3\beta_1c_1c_4c_{10} + 3\beta_2c_2c_4c_{10} + \frac12\beta_3c_3\big(c_2(c_2+c_3)+c_1c_2+ c_1(c_1+c_3)+c_1c_2\big)\\
\overset{\eqref{eq:l456}}{=}&
3\big(\beta_1c_1+\beta_2c_2+\beta_3c_3\big)\Psi_{3,2}.
\end{align*}
The remaining components are found similarly, or using $\cG$-invariance.
\end{proof}

From Theorem~\ref{thm:dualrec} we immediately obtain the following Marsden identity, generalizing Theorem~3.1 in \cite{Cohen.Lyche.Riesenfeld13}. 

\begin{corollary}[Marsden identity]\label{cor:marsdenlike}
With $S_{j,d}$ and $\Psi_{j,d}$ as in Table \ref{tab:dualpolynomials},
\begin{equation}\label{eq:marsdenlike}
(1-\vx^\rmT\vy)^d=\sum_{j=1}^{n_d}S_{j,d}(\vx)\Psi_{j,d}(\vy)=\vs_d(\vx)^\rmT\vpsi_d(\vy),\qquad d = 0,1,2,3.
\end{equation}
\end{corollary}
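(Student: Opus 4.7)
The plan is to deduce the identity by combining the two structural results already established: the matrix recursion $\mR_d(\vx)\vpsi_d(\vy)=(1-\vx^\rmT\vy)\vpsi_{d-1}(\vy)$ from Theorem~\ref{thm:dualrec}, and the pointwise factorization $\vs_d^\rmT = \ve_k^\rmT \mR_1\cdots\mR_d$ from Corollary~\ref{cor:BRR} that holds whenever $\vx\in\PS_k$. These two facts are designed to interlock, so the proof is essentially a telescoping computation rather than anything substantive.

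I would handle the base case $d=0$ separately. By the empty-product convention in Section~\ref{sec:notation}, each $\Psi_{j,0}\equiv 1$, so $\vpsi_0 = [1,\ldots,1]^\rmT$. Since $\vs_0$ is the partition of unity \eqref{eq:S0},
\begin{equation*}
\vs_0(\vx)^\rmT\vpsi_0(\vy) = \sum_{j=1}^{12} S_{j,0}(\vx) = 1 = (1-\vx^\rmT\vy)^0,
\end{equation*}
settling the identity for $d=0$.

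For $d\in\{1,2,3\}$, I would fix an arbitrary $\vx\in\PS$, choose $k$ with $\vx\in\PS_k$, and write
\begin{equation*}
\vs_d(\vx)^\rmT\vpsi_d(\vy) = \ve_k^\rmT \mR_1(\vx)\cdots \mR_{d-1}(\vx)\bigl[\mR_d(\vx)\vpsi_d(\vy)\bigr]
\end{equation*}
using Corollary~\ref{cor:BRR}. Applying Theorem~\ref{thm:dualrec} to the bracketed factor produces $(1-\vx^\rmT\vy)\vpsi_{d-1}(\vy)$, and since $(1-\vx^\rmT\vy)$ is a scalar it can be pulled past every matrix on the left. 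Iterating this step $d$ times strips off one factor of $(1-\vx^\rmT\vy)$ per application, yielding
\begin{equation*}
\vs_d(\vx)^\rmT\vpsi_d(\vy) = (1-\vx^\rmT\vy)^d\,\ve_k^\rmT\vpsi_0(\vy) = (1-\vx^\rmT\vy)^d,
\end{equation*}
where the last equality again uses $\vpsi_0=[1,\ldots,1]^\rmT$. Since $\vx\in\PS$ was arbitrary, this establishes the Marsden identity on all of $\PS$.

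There is no real obstacle here: all of the work has already been absorbed into Theorem~\ref{thm:dualrec}, whose case-by-case verification of \eqref{eq:dualreci} was the genuinely nontrivial step. The corollary is essentially a formal consequence, and the only points requiring mention are the empty-product convention that gives $\vpsi_0=[1,\ldots,1]^\rmT$ and the fact that $(1-\vx^\rmT\vy)$ commutes with the matrices $\mR_i(\vx)$ as a scalar.
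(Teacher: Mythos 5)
Your proof is correct and follows exactly the route the paper intends: the paper states the Marsden identity as an immediate consequence of Theorem~\ref{thm:dualrec}, and your telescoping argument via Corollary~\ref{cor:BRR}, together with the $d=0$ base case $\vpsi_0=[1,\ldots,1]^\rmT$ and the partition of unity \eqref{eq:S0}, is precisely that deduction spelled out.
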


As was shown in \cite[Theorem 5]{Lyche.Muntingh16}, the Marsden identity can be brought into the following barycentric form, which is independent of the vertices of the triangle.

\begin{corollary}[Barycentric Marsden identity]
Let $\beta_j = \beta_j(\bfx)$, $j = 1,2,3$, be the barycentric coordinates of $\bfx\in \RR^2$ with respect to $\PS = [\bfp_1, \bfp_2, \bfp_3]$. Then \eqref{eq:marsdenlike} is equivalent to
\begin{equation}\label{eq:barycentricmarsdenlike}
(\beta_1c_1 + \beta_2c_2 + \beta_3c_3)^d = \sum_{j=1}^{n_d} S_{j,d}(\beta_1\bfp_1 + \beta_2\bfp_2 + \beta_3 \bfp_3) \Psi_j(c_1,c_2,c_3),
\end{equation}
where $\bfx\in \PS$, $c_1,c_2,c_3\in \RR$, and, for $j = 1,\ldots, n_d$, 
\[ \Psi_j(c_1, c_2, c_3) = \prod_{k = 1}^d \big( \beta_1(\bfp_{j,d,k})c_1 + \beta_2(\bfp_{j,d,k})c_2 + \beta_3(\bfp_{j,d,k})c_3 \big). \]
\end{corollary}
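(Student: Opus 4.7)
The plan is to derive \eqref{eq:barycentricmarsdenlike} from \eqref{eq:marsdenlike} via the affine substitution $c_j = 1 - \vp_j^\rmT \vy$, $j = 1, 2, 3$, which only covers a $2$-parameter family of triples $(c_1, c_2, c_3)$, and then use homogeneity to extend the identity to arbitrary $c_1, c_2, c_3 \in \RR$. The reverse direction \eqref{eq:barycentricmarsdenlike} $\Rightarrow$ \eqref{eq:marsdenlike} will follow immediately from the same substitution.

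First I would establish the linear key identity
\[
1 - \vq^\rmT \vy = \beta_1(\vq) c_1 + \beta_2(\vq) c_2 + \beta_3(\vq) c_3, \qquad \vq \in \RR^2,
\]
a direct consequence of $\vq = \sum_i \beta_i(\vq) \vp_i$ together with $\sum_i \beta_i(\vq) = 1$. Applying it with $\vq = \vx$ turns the left-hand side of \eqref{eq:barycentricmarsdenlike} into $(1 - \vx^\rmT \vy)^d$, and applying it with $\vq = \vp_{j,d,k}$ for each dual point turns the factors of $\Psi_j(c_1, c_2, c_3)$ into the corresponding factors of $\Psi_{j,d}(\vy)$. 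Thus the substitution $c_j = 1 - \vp_j^\rmT \vy$ reduces \eqref{eq:barycentricmarsdenlike} to \eqref{eq:marsdenlike}, and by Corollary \ref{cor:marsdenlike}, \eqref{eq:barycentricmarsdenlike} holds for all $(c_1, c_2, c_3)$ in the affine $2$-plane $A := \{(1-\vp_1^\rmT\vy, 1-\vp_2^\rmT\vy, 1-\vp_3^\rmT\vy) : \vy \in \RR^2\} \subset \RR^3$.

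To lift the identity to arbitrary $\vc = (c_1, c_2, c_3) \in \RR^3$, I would observe that both sides of \eqref{eq:barycentricmarsdenlike}, viewed as functions of $\vc$ with $\vx$ held fixed, are polynomials homogeneous of degree $d$: the left-hand side is a pure $d$-th power of a linear form in $\vc$, while each $\Psi_j(\vc)$ is a product of $d$ linear forms in $\vc$ without constant term. Hence their difference vanishes on the cone $\bigcup_{\lambda \in \RR} \lambda A$. Since $\vp_1, \vp_2, \vp_3$ are affinely independent, $(1, 1, 1)$ does not lie in the linear $2$-plane $V := \{(\vp_1^\rmT\vy, \vp_2^\rmT\vy, \vp_3^\rmT\vy): \vy\in\RR^2\}$, so $A = (1,1,1) - V$ misses the origin, and the cone $\bigcup_\lambda \lambda A = \RR^3 \setminus (V \setminus \{0\})$ has nonempty interior. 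A polynomial vanishing on a set with nonempty interior vanishes identically.

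The main obstacle is precisely this last step: substitution alone only tests a codimension-$1$ affine slice of $\vc$-space, which is insufficient to force a polynomial identity in three variables. Homogeneity is the missing ingredient, promoting the $2$-dimensional slice $A$ to a $3$-dimensional open dense subset of $\RR^3$ on which the two sides must agree.
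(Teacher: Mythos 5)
Your argument is correct: the substitution $c_j=1-\vp_j^\rmT\vy$ together with the identity $1-\vq^\rmT\vy=\sum_i\beta_i(\vq)c_j(\vy)$ turns \eqref{eq:barycentricmarsdenlike} into \eqref{eq:marsdenlike} on the affine slice $A$, and the homogeneity-of-degree-$d$ argument (both sides are sums of products of $d$ linear forms in $(c_1,c_2,c_3)$ with no constant terms, and $A$ avoids the origin because $\vp_1,\vp_2,\vp_3$ cannot lie on a line $\{\vp:\vp^\rmT\vy=1\}$) legitimately promotes the identity from the $2$-plane $A$ to its cone, a set with nonempty interior, hence to all of $\RR^3$; the converse direction is indeed immediate by the same substitution. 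The paper itself gives no proof here, delegating the statement to Theorem~5 of \cite{Lyche.Muntingh16}, so your write-up supplies exactly the missing substitution-plus-homogeneity argument that makes the equivalence rigorous; the only point worth making explicit is the (easy) fact that $V$ is $2$-dimensional, which follows since affinely independent points cannot all lie on a line through the origin.
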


\begin{example}
The barycentric Marsden identity for the cubic S-basis $\vs_3$ is
\begin{align*}
& (c_1\beta_1 + c_2\beta_2 + c_3\beta_3)^3 = \left( c_1 \SimS{211000} + c_2 \SimS{121000} + c_3 \SimS{112000}  \right)^3 = \\
\frac14 c_1c_2c_3 \SimS{111111}
& + \frac14 c_1^3 \SimS{400101} + \frac12 c_1^2c_4 \SimS{310101} + \frac12 c_2^2c_4 \SimS{130110} + c_1c_2c_4 \SimS{221100} + c_1c_4c_6 \SimS{211101} \\
& + \frac14 c_2^3 \SimS{040110} + \frac12 c_2^2c_5 \SimS{031110} + \frac12 c_3^2c_5 \SimS{013011} + c_2c_3c_5 \SimS{122010} + c_2c_4c_5 \SimS{121110} \\
& + \frac14 c_3^3 \SimS{004011} + \frac12 c_3^2c_6 \SimS{103011} + \frac12 c_1^2c_6 \SimS{301101} + c_1c_3c_6 \SimS{212001} + c_3c_5c_6 \SimS{112011}
\end{align*}
\end{example}

\begin{remark}
Substituting \eqref{eq:TransformationQuadraticAlternative} and \eqref{eq:TransformationCubicAlternative}, we also obtain Marsden identities for the alternative bases $\tilde{\vs}_d = [\tilde{S}_{j,d}]_j$, for $d=2,3$, with corresponding dual polynomials $\tilde{\vpsi}_d = [\tilde{\Psi}_{j,d}]_j$, shown in Table \ref{tab:dualpolynomials}.
\end{remark}

\begin{remark}[Are there other linear S-bases on the 12-split?]\label{rem:OtherLinear}
In order to not violate Properties P1 and P2, we can only alter $\vs_1$ by replacing
\begin{subequations}
\begin{align}
\left[\SimS{1101000001}\right]_\cG & \,\text{by}\, \left[\SimS{1111000000}\right]_\cG, \label{eq:Replacement1a}\\
\left[\SimS{1001010001}\right]_\cG & \,\text{by}\, \left[\SimS{1100110000}\right]_\cG & \text{or} & & \left[\SimS{1001010001}\right]_\cG \,\text{by}\, \left[\SimS{1001110000}\right]_\cG, \label{eq:Replacement1b}\\
\SimS{0001110001} & \,\text{by}\, \SimS{1110000001}. \label{eq:Replacement1c}
\end{align}
\end{subequations}

Regarding \eqref{eq:Replacement1c}, it can be shown from the piecewise polynomial representation (obtained through recursion) that
\begin{equation}\label{eq:Replacement1c2}
S_{10,1} := \frac14 \SimS{0001110001} = \SimS{1110000001} - \frac34\left(\frac13 \SimS{1001010001} + \frac13\SimS{0101100001} + \frac13 \SimS{0010110001}\right) = \SimS{1110000001} - \frac34(S_{7,1} + S_{8,1} + S_{9,1}).
\end{equation}
Substituting this into the Marsden identity for $\vs_1$ yields the terms
$\left(c_7 - \frac34 c_{10}\right) S_{7,1} = \frac{1}{4} c_1 S_{7,1}$ and $c_1 S_{1,1}$ with coinciding domain points, violating Property P4 when counting multiplicities.

Regarding \eqref{eq:Replacement1b}, knot insertion at the barycenter $\vp_{10}$ in terms of the midpoint $\vp_5$ and opposing corner $\vp_1$ gives
\begin{equation}\label{eq:Replacement1b1}
S_{7,1} := \frac13\SimS{1001010001} = \frac12 \SimS{1001110000} - \frac23 \frac14 \SimS{0001110001} = \frac12 \SimS{1001110000} - \frac23 S_{10,1}.
\end{equation}
Substituting this into the Marsden identity for $\vs_1$ yields the term
$\left(c_{10} - \frac23 (c_7 + c_8 + c_9) \right) S_{10,1} = - c_{10}S_{10,1}$, whose negative weight violates Property P3. Moreover, since knot insertion at the midpoints in terms of the corners gives
\begin{equation}\label{eq:Replacement1b2}
\begin{bmatrix} \SimS{1100110000}\\ \SimS{0111010000}\\ \SimS{1011100000} \end{bmatrix} =
\begin{bmatrix} \frac12 & \frac12 & 0\\ 0 & \frac12 & \frac12\\ \frac12 & 0 & \frac12 \end{bmatrix}
\begin{bmatrix} \SimS{1001110000}\\ \SimS{0101110000}\\ \SimS{0011110000} \end{bmatrix},
\end{equation}
replacing instead $\left[\SimS{1001010001}\right]_\cG$ by $\left[\SimS{1100110000}\right]_\cG$ still leaves the negative weight of $S_{10,1}$. Moreover, combining either of these replacements with \eqref{eq:Replacement1c2} instead yields a negative weight for $\SimS{1110000001}$.

Regarding \eqref{eq:Replacement1a}, knot insertion at the barycenter $\vp_{10}$ in terms of the midpoint $\vp_4$ and opposing corner $\vp_3$ gives
\begin{equation}
S_{4,1} := \frac13 \SimS{1101000001} = \SimS{1111000000} - \frac23 \SimS{1110000001}. 
\end{equation}
Substituting this into the Marsden identity for $\vs_1$, and eliminating $\SimS{1110000001}$ using \eqref{eq:Replacement1c2}, yields the term
$ \left(c_{10} - \frac23 \left(c_4 + c_5 + c_6\right) \right) S_{10,1}
= -c_{10}S_{10,1}, $
whose negative weight violates Property P3. Substituting \eqref{eq:Replacement1b1} makes this weight even smaller and using \eqref{eq:Replacement1b2} does not change this weight, while substituting \eqref{eq:Replacement1c2} instead yields a negative weight for $\SimS{1110000001}$. 
\end{remark}

\begin{remark}[Are there other quadratic S-bases on the 12-split?]\label{rem:OtherQuadratic}
In order to not violate Properties P1 and P2, Table \ref{tab:AllSimplexSplines} shows that our only option is to replace
\begin{equation}\label{eq:Replacement2}
\left[\SimS{210101}\right]_\cG \qquad \text{by}\qquad \left[\SimS{211100}\right]_\cG,
\end{equation}
in either the basis $\vs_2$ or $\tilde{\vs}_2$. Let us consider the latter case. Knot insertion at the midpoints in terms of the endpoints gives
\begin{equation}
S_{2,2} = \tilde{S}_{2,2} := \frac12 \SimS{210101} = \SimS{211100} - \frac12 \SimS{111101} = \SimS{211100} - \frac23 \tilde{S}_{3,2}.
\end{equation}
Inserting this equation in the Marsden identity for $\tilde{\vs}_2$ yields the Marsden identity for the new basis,
\begin{align*}
(c_1 \beta_1 + c_2\beta_2 + c_3\beta_3)^2 = 
& + \frac14 \left[ c_1^2 \SimS{300101} + c_2^2 \SimS{030110} + c_3^2 \SimS{003011} \right]
-\frac14 \left[ c_1^2 \SimS{111101} + c_2^2 \SimS{111110} + c_3^2 \SimS{111011} \right] \\
& + \left[ c_1c_4 \SimS{211100} + c_1c_6 \SimS{211001} + c_2c_4 \SimS{121100} + c_2c_5\SimS{121010} + c_3c_5 \SimS{112010} + c_3c_6 \SimS{112001} \right].
\end{align*}
Hence the new basis does not form a positive partition of unity. Moreover, since the matrix in \eqref{eq:TransformationQuadraticAlternative} is nonnegative, the same holds for the basis obtained by making the replacement \eqref{eq:Replacement2} in $\vs_2$.
\end{remark}

\begin{remark}[Are there other cubic S-bases on the 12-split?]\label{rem:OtherCubic}
Similar to the quadratic case, our only option is to replace
\begin{equation}\label{eq:Replacement3}
\left[\SimS{310101}\right]_\cG \qquad \text{by}\qquad \left[\SimS{311100}\right]_\cG
\end{equation}
in either the basis $\vs_3$ or $\tilde{\vs}_3$. Analogous to Remark \ref{rem:OtherQuadratic}, the latter case yields a basis without positive partition of unity. For the basis obtained by making the replacement \eqref{eq:Replacement3} in $\vs_3$, a similar calculation yields dual polynomials without linear factors, violating Property P3.
\end{remark}

\begin{table}[th!]
\begin{tabular*}{\columnwidth}{@{ }@{\extracolsep{\stretch{1}}}*{9}{c}@{ }}
\toprule
 & $j=1$ & $j=2$ & $j=3$      & $j=4$      & $j=5$   & $j=6$   & $j=7$ & $j=8$\\
\midrule 
$S_{j,1}$ & $\frac14$\SimS{2001010000}\! & $\frac14$\SimS{0201100000}\! & $\frac14$\SimS{0020110000}\! & $\frac13$\SimS{1101000001}\! & $\frac13$\SimS{0110100001}\! & $\frac13$\SimS{1010010001}\! & $\frac13$\SimS{1001010001}\! & $\frac13$\SimS{0101100001}\!\\
$\Psi_{j,1}$ & $c_1$   & $c_2$      & $c_3$      & $c_4$      & $c_5$   & $c_6$   &$c_7$&$c_8$\\
$\bfbeta(\bfxi_{j,1})$ & $(1\,0\,0)$ & $(0\,1\,0)$ & $(0\,0\,1)$ & $(\frac12\,\frac12\,0)$ & $(0\,\frac12\,\frac12)$ & $(\frac12\,0\,\frac12)$ & $(\frac12\,\frac14\,\frac14)$ & $(\frac14\,\frac12\,\frac14)$\\
\midrule
$S_{j,2}$ & $\frac14$\SimS{300101}\! & $\frac12$\SimS{210101}\! & $\frac34$\SimS{110111}\! & 
$\frac12$\SimS{120110}\! & $\frac14$\SimS{030110}\! & $\frac12$\SimS{021110} & $\frac34$\SimS{011111}\! & $\frac12$\SimS{012011}\!\\
$\Psi_{j,2}$ & $c_1^2$ & $c_1c_4$   & $c_4c_{10}$ & $c_2c_4$   & $c_2^2$ & $c_2c_5$&$c_5c_{10}$&$c_3c_5$\\
$\bfbeta(\bfxi_{j,2})$ & $(1\,0\,0)$ & $(\frac34\,\frac14\,0)$ & $(\frac{5}{12}\, \frac{5}{12}\, \frac16)$ & $(\frac14\,\frac34\,0)$ & $(0\,1\,0)$ & $(0\,\frac34\,\frac14)$ & $(\frac16\,\frac{5}{12}\,\frac{5}{12})$ & $(0\,\frac14\,\frac34)$\\
\midrule
$\tilde{S}_{j,2}$ & $\frac14$\SimS{300101}\! & $\frac12$\SimS{210101}\! & $\frac34$\SimS{111101}\! & 
$\frac12$\SimS{120110}\! & $\frac14$\SimS{030110}\! & $\frac12$\SimS{021110} & $\frac34$\SimS{111110}\! & $\frac12$\SimS{012011}\!\\
$\tilde{\Psi}_{j,2}$ & $c_1^2$ & $c_1c_4$   & $c_1c_{10}$& $c_2c_4$   & $c_2^2$ & $c_2c_5$&$c_2c_{10}$&$c_3c_5$\\
$\bfbeta(\tilde{\bfxi}_{j,2})$ & $(1\,0\,0)$ & $(\frac34\,\frac14\,0)$ & $(\frac46\, \frac16\, \frac16)$ & $(\frac14\,\frac34\,0)$ & $(0\,1\,0)$ & $(0\,\frac34\,\frac14)$ & $(\frac16\,\frac46\,\frac16)$ & $(0\,\frac14\,\frac34)$\\\midrule
$S_{j,3}$ & $\frac14$\SimS{400101}\! & $\frac12$\SimS{310101}\! & \SimS{221100}\! & $\frac12$\SimS{130110}\! & $\frac14$\SimS{040110}\! & $\frac12$\SimS{031110}\! & \SimS{122010}\! & $\frac12$\SimS{013011}\!\\
$\Psi_{j,3}$ & $c_1^3$ & $c_1^2c_4$ & $c_1c_2c_4$& $c_2^2c_4$ & $c_2^3$ & $c_2^2c_5$&$c_2c_3c_5$&$c_3^2c_5$\\
$\bfbeta(\bfxi_{j,3})$ & $(1\,0\,0)$ & $(\frac56\,\frac16\,0)$ & $(\frac12\,\frac12\,0)$ & $(\frac16\,\frac56\,0)$ & $(0\,1\,0)$ & $(0\,\frac56\,\frac16)$ & $(0\,\frac12\,\frac12)$ & $(0\,\frac16\,\frac56)$ \\
\midrule
$\tilde{S}_{j,3}$ &
$\frac14$\SimS{400101}\! & $\frac12$\SimS{310101}\! & \SimS{221100}\! & $\frac12$\SimS{130110}\! & $\frac14$\SimS{040110}\! & $\frac12$\SimS{031110}\! & \SimS{122010}\! & $\frac12$\SimS{013011}\\
$\tilde{\Psi}_{j,3}$ & $c_1^3$ & $c_1^2c_4$ & $c_1c_2c_4$& $c_2^2c_4$ & $c_2^3$ & $c_2^2c_5$&$c_2c_3c_5$&$c_3^2c_5$ \\
$\bfbeta(\tilde{\bfxi}_{j,3})$ & $(1\,0\,0)$ & $(\frac56\,\frac16\,0)$ & $(\frac12\,\frac12\,0)$ & $(\frac16\,\frac56\,0)$ & $(0\,1\,0)$ & $(0\,\frac56\,\frac16)$ & $(0\,\frac12\,\frac12)$ & $(0\,\frac16\,\frac56)$ \\
\midrule
 &$j=9$&$j=10$&$j=11$&$j=12$&$j=13$&$j=14$&$j=15$&$j=16$\\
\midrule
$S_{j,1}$ & $\frac13$\SimS{0010110001}\! & $\frac14$\SimS{0001110001}\\
$\Psi_{j,1}$ & $c_9$  & $c_{10}$\\
$\bfbeta(\bfxi_{j,1})$ & $(\frac14\,\frac14\,\frac12)$ & $(\frac13\,\frac13\,\frac13)$ \\
\midrule
$S_{j,2}$ & $\frac14$\SimS{003011}\! & $\frac12$\SimS{102011}\! & $\frac34$\SimS{101111} & $\frac12$\SimS{201101}\!\\
$\Psi_{j,2}$ & $c_3^2$&$c_3c_6$  &$c_6c_{10}$ & $c_1c_6$\\
$\bfbeta(\bfxi_{j,2})$ & $(0\,0\,1)$ & $(\frac14\,0\,\frac34)$ & $(\frac{5}{12}\,\frac{5}{12}\,\frac16)$ & $(\frac34\,0\,\frac14)$\\
\midrule
$\tilde{S}_{j,2}$ & $\frac14$\SimS{003011}\! & $\frac12$\SimS{102011}\! & $\frac34$\SimS{111011} & $\frac12$\SimS{201101}\!\\
$\tilde{\Psi}_{j,2}$ &$c_3^2$&$c_3c_6$  &$c_3c_{10}$&$c_1c_6$\\
$\bfbeta(\tilde{\bfxi}_{j,2})$ & $(0\,0\,1)$ & $(\frac14\,0\,\frac34)$ & $(\frac16\,\frac16\,\frac46)$ & $(\frac34\,0\,\frac14)$\\
\midrule
$S_{j,3}$ & $\frac14$\SimS{004011}\! & $\frac12$\SimS{103011}\! & \SimS{212001}\! & $\frac12$\SimS{301101}\! & \SimS{211101}\! & \SimS{121110}\! & \SimS{112011}\! & $\frac14$\SimS{111111}\\
$\Psi_{j,3}$ &$c_3^3$&$c_3^2c_6$&$c_1c_3c_6$&$c_1^2c_6$&$c_1c_4c_6$&$c_2c_4c_5$&$c_3c_5c_6$&$c_1c_2c_3$\\
$\bfbeta(\bfxi_{j,3})$ & $(0\,0\,1)$ & $(\frac16\,0\,\frac56)$ & $(\frac12\,0\,\frac12)$ & $(\frac56\,0\,\frac16)$ & $(\frac23\,\frac16\,\frac16)$ & $(\frac16\,\frac23\,\frac16)$ & $(\frac16\,\frac16\,\frac23)$ & $(\frac13\,\frac13\,\frac13)$\\
\midrule
$\tilde{S}_{j,3}$ & $\frac14$\SimS{004011}\! & $\frac12$\SimS{103011}\! & \SimS{212001}\! & $\frac12$\SimS{301101}\! & $\frac34$\SimS{211101}\! & $\frac34$\SimS{121110}\! & $\frac34$\SimS{112011}\! & \SimS{222000}\\
$\tilde{\Psi}_{j,3}$ &$c_3^3$&$c_3^2c_6$&$c_1c_3c_6$&$c_1^2c_6$& $c_1^2c_{10}$ & $c_2^2c_{10}$ & $ c_3^2 c_{10}$ & $c_1c_2c_3$\\
$\bfbeta(\tilde{\bfxi}_{j,3})$ & $(0\,0\,1)$ & $(\frac16\,0\,\frac56)$ & $(\frac12\,0\,\frac12)$ & $(\frac56\,0\,\frac16)$ & $(\frac79\,\frac19\,\frac19)$ & $(\frac19\,\frac79\,\frac19)$ & $(\frac19\,\frac19\,\frac79)$ & $(\frac13\,\frac13\,\frac13)$\\
\bottomrule
\end{tabular*}
\caption{Basis functions $S_{j,d}, \tilde{S}_{j,d}$, domain points $\bfxi_{j,d}, \tilde{\bfxi}_{j,d}$, and corresponding dual polynomials $\Psi_{j,d}, \tilde{\Psi}_{j,d}$, factored into linear polynomials $c_j$ as in \eqref{eq:lj}.}\label{tab:dualpolynomials}
\vspace{-2.3em}
\end{table}

\subsection{Polynomial reproduction}
The barycentric Marsden identity can directly be applied to express Bernstein polynomials on $\PS$ in terms of the S-basis. In particular, applying the multinomial theorem to the left hand side of \eqref{eq:barycentricmarsdenlike}, one notices that the Bernstein polynomial $B^d_{i_1,i_2,i_3}$ appears as the coefficient of $c_1^{i_1} c_2^{i_2} c_3^{i_3}$. Hence, defining the ``coefficient of'' operator \cite{Knuth94}
\[ [c_1^{i_1} c_2^{i_2} c_3^{i_3}] F := \frac{1}{i_1!i_2!i_3!} \frac{\partial^{i_1 + i_2 + i_3} F}{\partial c_1^{i_1} \partial c_2^{i_2} \partial c_3^{i_3}}(0,0,0) \]
for any formal power series $F(c_1,c_2,c_3)$ and nonnegative integers with sum $i_1 + i_2 + i_3 = d$,
\[ B^d_{i_1,i_2,i_3} = \sum_{i=1}^{n_d} S_{j,d}(\beta_1\bfp_1 + \beta_2\bfp_2 + \beta_3 \bfp_3) [c_1^{i_1} c_2^{i_2} c_3^{i_3}] \Psi_j (c_1,c_2,c_3). \]
Thus one immediately sees from the monomials in the dual polynomials which simplex splines appear in the above linear combination. For instance, substituting the dual polynomials from Table \ref{tab:dualpolynomials} and the short-hands \eqref{eq:l456}, one obtains
\begin{align}
B^3_{300} & = \SimS{411000} = \frac14 \SimS{400101} + \frac14 \SimS{310101} + \frac14 \SimS{301101} + \frac14 \SimS{211101}, \\
B^3_{210} & = \SimS{321000} = \frac14 \SimS{310101} + \frac12 \SimS{221100} + \frac14 \SimS{211101}, \\
B^3_{111} & = \SimS{222000} = \frac14 \SimS{211101} + \frac14 \SimS{121110} + \frac14 \SimS{112011} + \frac14 \SimS{111111},\label{eq:reproductionB111}
\end{align}
consistent with the result achieved by repeatedly applying knot insertion, see for instance \eqref{eq:222000to111111}.

\subsection{Quasi-interpolation}\label{sec:quasi-interpolant}
Based on a standard construction, the Marsden identity gives rise to quasi-interpolants in terms of the de Boor-Fix functionals. In this section, we present a different quasi-interpolant that solely involves point evaluations at averages of dual points (Property P7).

Let $(\beta_1,\beta_2,\beta_3)$ be the barycentric coordinates with respect to the triangle $\PS$. As explained in \cite[\S 6.1]{Lyche.Muntingh16}, the Bernstein polynomial $B^d_{i_1,i_2,i_3}$ can be expressed in terms of the simplex spline basis by replacing each dual polynomial in \eqref{eq:marsdenlike}, after substituting \eqref{eq:l456}, by its coefficient of $c_1^{i_1} c_2^{i_2} c_3^{i_3}$.

\begin{theorem}\label{thm:QI}
For $d = 1,2,3$ and each basis in \eqref{eq:SS-bases} with dual points $\bfp_{j,d,k}$, consider the map
\begin{equation}\label{eq:quasi-interpolant}
Q_d: C^0(\PS)\longrightarrow \SS_d(\PSB),\qquad Q_d(F) = \sum_{j=1}^{n_d} l_{j,d}(F) S_{j,d},
\end{equation}
where the functionals $l_{j,d}: C^0(\PS)\longrightarrow \RR$ are given by
\[ l_{j,d}(F) := \sum_{m = 1}^d \frac{m^d}{d!} (-1)^{d-m} \sum_{1\leq k_1 < \cdots < k_m\leq d} F\left( \frac{\bfp_{j,d,k_1} + \cdots + \bfp_{j,d,k_m}}{m} \right). \]
Then $Q_d$ is a quasi-interpolant reproducing polynomials up to degree $d$.
\end{theorem}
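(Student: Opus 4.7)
The plan is to reduce polynomial reproduction to an evaluation of the functionals $l_{j,d}$ on a convenient spanning family, apply the Marsden identity, and then collapse the result via a polarization-type combinatorial identity. Since $Q_d$ is linear, it suffices to verify $Q_d(F) = F$ on any family that spans $\PP_d(\RR^2)$. I would take the family $F_\bfy(\bfx) := (1 - \bfx^\rmT \bfy)^d$ indexed by $\bfy \in \RR^2$: the $d$-th powers of linear forms span the homogeneous polynomials of degree $d$ in two variables, and the rescaling $\bfy \mapsto t\bfy$ combined with the binomial expansion recovers all lower total degrees.

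Next, I would apply the Marsden identity \eqref{eq:marsdenlike} to write $F_\bfy = \sum_{j=1}^{n_d} \Psi_{j,d}(\bfy)\, S_{j,d}$, so that $Q_d(F_\bfy) = F_\bfy$ for all $\bfy \in \RR^2$ becomes equivalent to the pointwise equalities $l_{j,d}(F_\bfy) = \Psi_{j,d}(\bfy)$. Setting $b_k := 1 - \bfp_{j,d,k}^\rmT \bfy$, so that $\Psi_{j,d}(\bfy) = b_1 \cdots b_d$, the evaluation at the $m$-fold average gives
\[ F_\bfy\!\left(\frac{\bfp_{j,d,k_1} + \cdots + \bfp_{j,d,k_m}}{m}\right) = \left(\frac{b_{k_1} + \cdots + b_{k_m}}{m}\right)^d, \]
and the factor $m^d/d!$ in $l_{j,d}$ cancels with $m^{-d}$ from the evaluation. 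Hence the problem reduces to the purely algebraic identity
\[ d!\,\prod_{k=1}^d b_k \;=\; \sum_{m=1}^d (-1)^{d-m} \sum_{1 \le k_1 < \cdots < k_m \le d} (b_{k_1} + \cdots + b_{k_m})^d. \]

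Finally, I would prove this identity---a specialization of the polarization identity applied to $P(t) = t^d$---by expanding each inner sum via the multinomial theorem and grouping by monomials $\prod_k b_k^{\alpha_k}$ with $|\bfalpha| = d$. Writing $s := |\supp(\bfalpha)|$, the total coefficient of such a monomial equals $\binom{d}{\bfalpha}$ times $\sum_{m=s}^d (-1)^{d-m}\binom{d-s}{m-s}$; the substitution $m = s + k$ turns the inner sum into $(-1)^{d-s}(1-1)^{d-s}$, which vanishes unless $s = d$, i.e.\ unless $\bfalpha = (1,1,\ldots,1)$. For that unique multi-index the coefficient equals $d!$, establishing the identity and completing the proof.

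The main obstacle is the last step: while the polarization viewpoint makes the structural vanishing transparent, one must keep track of multinomial coefficients and signs carefully across the two layers of summation. The preceding reductions, by contrast, are routine consequences of linearity of $Q_d$ and the already-established Marsden identity.
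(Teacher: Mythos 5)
Your proof is correct, but it follows a genuinely different route from the paper's. The paper cites the cases $d=1,2$ from earlier work and, for $d=3$, verifies reproduction by explicit computation: it writes the functionals as a fixed coefficient vector applied to point evaluations at the $25$ dual point averages, tabulates the values of $B^3_{300}, B^3_{210}, B^3_{111}$ at these points, and checks (using $\cG$-symmetry and the Jupyter notebook) that the resulting coefficients match the known S-basis expansions of these Bernstein polynomials. You instead argue uniformly in $d$: by linearity it suffices to reproduce the spanning family $F_\vy(\vx)=(1-\vx^\rmT\vy)^d$, the Marsden identity \eqref{eq:marsdenlike} reduces this to $l_{j,d}(F_\vy)=\Psi_{j,d}(\vy)$ with $\Psi_{j,d}$ as in \eqref{eq:dualp}, and after the observation $F_\vy\bigl((\vp_{j,d,k_1}+\cdots+\vp_{j,d,k_m})/m\bigr)=\bigl((b_{k_1}+\cdots+b_{k_m})/m\bigr)^d$ the claim becomes the classical polarization identity $d!\,b_1\cdots b_d=\sum_{m=1}^d(-1)^{d-m}\sum_{k_1<\cdots<k_m}(b_{k_1}+\cdots+b_{k_m})^d$, which your multinomial/inclusion--exclusion computation establishes correctly (the binomial sum $(-1)^{d-s}(1-1)^{d-s}$ kills every monomial except $b_1\cdots b_d$). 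Two small points worth making explicit: the dual points $\vp_{j,d,k}$ may coincide (e.g.\ $\Psi_{1,3}=c_1^3$), which is harmless since the polarization identity holds for arbitrary, possibly repeated, $b_k$; and the spanning of $\PP_d(\RR^2)$ by the $F_\vy$ deserves a sentence (rescaling $\vy\mapsto t\vy$ plus a Vandermonde argument, as you sketch). What your approach buys is a short, conceptual proof valid simultaneously for all $d$ and for the alternative bases $\tilde{\vs}_2,\tilde{\vs}_3$ (using their Marsden identities), with no symbolic case-checking; what the paper's approach buys is the explicit evaluation data (the matrix $\mM$ and Table \ref{tab:evaluations_at_dual_point_averages}) that is useful for implementing the quasi-interpolant in practice.
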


\begin{proof}
For $d = 1,2$ the statement is shown in \cite[\S 6.1]{Cohen.Lyche.Riesenfeld13}. We give an explicit proof for the remaining case $d = 3$, analogous to the proof provided in \cite{Lyche.Merrien18}. In that case
$[l_{1,3}(f),\ldots, l_{16,3}(f)] = [\frac16, \frac16, \frac16, -\frac43, -\frac43, -\frac43, \frac92] f(\mM)$, applied component-wise, where
\begin{align*}
\mM
& =
\begin{bmatrix}
\bfp_{1,3,1} & \cdots & \bfp_{16,3,1}\\
\bfp_{1,3,2} & \cdots & \bfp_{16,3,2}\\
\bfp_{1,3,3} & \cdots & \bfp_{16,3,3}\\ 
\frac{\bfp_{1,3,1} + \bfp_{1,3,2}}{2} & \cdots & \frac{\bfp_{16,3,1} + \bfp_{16,3,2}}{2}\\
\frac{\bfp_{1,3,1} + \bfp_{1,3,3}}{2} & \cdots & \frac{\bfp_{16,3,1} + \bfp_{16,3,3}}{2}\\
\frac{\bfp_{1,3,2} + \bfp_{1,3,3}}{2} & \cdots & \frac{\bfp_{16,3,2} + \bfp_{16,3,3}}{2}\\
\frac{\bfp_{1,3,1} + \bfp_{1,3,2} + \bfp_{1,3,3}}{3} & \cdots & \frac{\bfp_{16,3,1} + \bfp_{16,3,2} + \bfp_{16,3,3}}{3}
\end{bmatrix} =
\end{align*}
$$ \left[
\begin{array}{cccccccccccccccc}
\vl_1&    \vl_1&    \vl_1&    \vl_5& \vl_5&    \vl_5&    \vl_5&    \vl_9& \vl_9&  \vl_{9}&    \vl_9&    \vl_1&    \vl_1&    \vl_5&    \vl_9&    \vl_1 \\
\vl_1&    \vl_1&    \vl_5&    \vl_5& \vl_5&    \vl_5&    \vl_9&    \vl_9& \vl_9&  \vl_{9}&    \vl_1&    \vl_1&    \vl_3&    \vl_7& \vl_{11}&    \vl_5 \\
\vl_1&    \vl_3&    \vl_3&    \vl_3& \vl_5&    \vl_7&    \vl_7&    \vl_7& \vl_9& \vl_{11}& \vl_{11}& \vl_{11}& \vl_{11}&    \vl_3&    \vl_7&    \vl_9 \\
\vl_1&    \vl_1&    \vl_3&    \vl_5& \vl_5&    \vl_5&    \vl_7&    \vl_9& \vl_9&  \vl_{9}& \vl_{11}&    \vl_1& \vl_{17}& \vl_{19}& \vl_{21}&    \vl_3 \\
\vl_1& \vl_{17}& \vl_{17}& \vl_{18}& \vl_5& \vl_{19}& \vl_{19}& \vl_{20}& \vl_9& \vl_{21}& \vl_{21}& \vl_{22}& \vl_{22}& \vl_{18}& \vl_{20}& \vl_{11} \\
\vl_1& \vl_{17}& \vl_{18}& \vl_{18}& \vl_5& \vl_{19}& \vl_{20}& \vl_{20}& \vl_9& \vl_{21}& \vl_{22}& \vl_{22}& \vl_{23}& \vl_{24}& \vl_{25}&    \vl_7 \\
\vl_1&    \vl_2&    \vl_3&    \vl_4& \vl_5&    \vl_6&    \vl_7&    \vl_8& \vl_9& \vl_{10}& \vl_{11}& \vl_{12}& \vl_{13}& \vl_{14}& \vl_{15}& \vl_{16} \\
\end{array}
\right]
$$
with $\vl_j = \bfxi_{j,3}$ for $j = 1, \ldots, 16$ the domain points, and with the quarterpoints
\begin{align*}
\vl_{17} = \frac{\vp_1 + \vp_4}{2},\qquad 
\vl_{18} = \frac{\vp_2 + \vp_4}{2},\qquad
\vl_{19} = \frac{\vp_2 + \vp_5}{2},\\
\vl_{20} = \frac{\vp_3 + \vp_5}{2},\qquad
\vl_{21} = \frac{\vp_3 + \vp_6}{2},\qquad
\vl_{22} = \frac{\vp_1 + \vp_6}{2},\\
\vl_{23} = \frac{\vp_4 + \vp_6}{2},\qquad
\vl_{24} = \frac{\vp_4 + \vp_5}{2},\qquad
\vl_{25} = \frac{\vp_5 + \vp_6}{2},
\end{align*}
as in Figure \ref{fig:ControlNet-3}. To prove that $Q_3$ reproduces polynomials up to degree 3, i.e., $Q_d(B^3_{ijk}) = B^3_{ijk}$ whenever $i + j + k = 3$, it suffices to show this for $B^3_{300}, B^3_{210}, B^3_{111}$ using the symmetries. Evaluating these polynomials at the dual point averages yields Table \ref{tab:evaluations_at_dual_point_averages}.

\begin{table}
\begin{tabular*}{\columnwidth}{@{ }@{\extracolsep{\stretch{1}}}*{14}{c}}
\toprule
           & $\vl_{1}$ & $\vl_{2}$ & $\vl_{3}$ & $\vl_{4}$ & $\vl_{5}$ & $\vl_{6}$ & $\vl_{7}$ & $\vl_{8}$ & $\vl_{9}$ & $\vl_{10}$ & $\vl_{11}$ & $\vl_{12}$ & $\vl_{13}$\\
\SimS{411000} & 1 & $\frac{125}{216}$ & $\frac{1}{8}$ & $\frac{1}{216}$ & 0 & 0 & 0 & 0 & 0 & $\frac{1}{216}$ & $\frac{1}{8}$ & $\frac{125}{216}$ & $\frac{8}{27}$\\
\SimS{321000} & 0 & $\frac{25}{72}$ & $\frac{3}{8}$ & $\frac{5}{72}$ & 0 & 0 & 0 & 0 & 0 & 0 & 0 & 0 & $\frac{2}{9}$\\
\SimS{222000} & 0 & 0 & 0 & 0 & 0 & 0 & 0 & 0 & 0 & 0 & 0 & 0 & $\frac{1}{9}$\\
\midrule
           & $\vl_{14}$ & $\vl_{15}$ & $\vl_{16}$ & $\vl_{17}$ & $\vl_{18}$ & $\vl_{19}$ & $\vl_{20}$ & $\vl_{21}$ & $\vl_{22}$ & $\vl_{23}$ & $\vl_{24}$ & $\vl_{25}$\\
\SimS{411000} & $\frac{1}{216}$ & $\frac{1}{216}$ & $\frac{1}{27}$ & $\frac{27}{64}$ & $\frac{1}{64}$ & 0 & 0 & $\frac{1}{64}$ & $\frac{27}{64}$ & $\frac{1}{8}$ & $\frac{1}{64}$ & $\frac{1}{64}$\\
\SimS{321000} & $\frac{1}{18}$ & $\frac{1}{72}$ & $\frac{1}{9}$ & $\frac{27}{64}$ & $\frac{9}{64}$ & 0 & 0 & 0 & 0 & $\frac{3}{16}$ & $\frac{3}{32}$ & $\frac{3}{64}$\\
\SimS{222000} & $\frac{1}{9}$ & $\frac{1}{9}$ & $\frac{2}{9}$ & 0 & 0 & 0 & 0 & 0 & 0 & $\frac{3}{16}$ & $\frac{3}{16}$ & $\frac{3}{16}$\\
\bottomrule
\end{tabular*}
\caption{The values of the Bernstein polynomials $B^3_{300}, B^3_{210}, B^3_{111}$ at the dual point averages $\vl_1,\ldots,\vl_{25}$.}\label{tab:evaluations_at_dual_point_averages}
\end{table}

For instance, the Bernstein polynomial $B^3_{111}$ is only nonzero for $\vl_{13},\vl_{14},\vl_{15},$ $\vl_{16}$ and $\vl_{23}, \vl_{24}, \vl_{25}$. Hence only the entries in the last 4 columns and last 2 rows in $f(\mM)$ can be nonzero, yielding the coefficients
\begin{align*}
\begin{bmatrix} l_{13,3} & \cdots & l_{16,3} \end{bmatrix} B^3_{111} & = 
\begin{bmatrix}-\frac43 & \frac92 \end{bmatrix}
B^3_{111}\left(
\begin{bmatrix}
\vl_{23} & \vl_{24} & \vl_{25} & \vl_7\\
\vl_{13} & \vl_{14} & \vl_{15} & \vl_{16}
\end{bmatrix}\right)\\
& =
\begin{bmatrix}-\frac43 & \frac92 \end{bmatrix}
\begin{bmatrix}
\frac{3}{16} & \frac{3}{16} & \frac{3}{16} & 0\\
\frac19 & \frac19 & \frac19 & \frac29
\end{bmatrix} = 
\begin{bmatrix}
\frac14 & \frac14 & \frac14 & 1
\end{bmatrix},
\end{align*}
consistent with \eqref{eq:reproductionB111}. Similarly one establishes reproduction of $B^3_{300}$ and $B^3_{210}$; additional details are shown in the Jupyter notebook.
\end{proof}

The quasi-interpolant $Q_d$ is bounded independently of the geometry of $\PS$, since, using that $\vs_d$ forms a partition of unity,
\[ \|Q_d(F)\|_{L_\infty(\PSsmall)}
\leq \max_j |l_{j,d}(F)|
\leq C_d \|F\|_{L_\infty(\PSsmall)},\qquad C_d = \sum_{m=1}^d \frac{m^d}{d!}{d\choose m}.
\]
In particular, $[C_1, C_2, C_3] = [1, 3, 9]$. Therefore, by a standard argument, $Q_d$ is a quasi-interpolant that approximates locally with order 4 smooth functions whose first four derivatives are in $L_\infty(\PS)$. 

\begin{remark}
In \cite[Lemma 6.1]{Cohen.Lyche.Riesenfeld13} it was shown, for $d = 1,2$, that the functionals $l_{j,d}$ form the dual basis to $\vs_d$, i.e., $l_{j,d}(S_{i,d}) = \delta_{ij}$. This is equivalent to the statement that $Q_d$ reproduces all splines in $\SS_d(\PSB)$.

However, for $d = 3$ this is not the case. For instance, repeatedly using the recurrence relation \eqref{eq:Qrec2} with respect to the triangle $\Delta' := [\bfp_1, \bfp_4, \bfp_6]$, 
\[ S_{1,3} = \frac14 \SimS{400101} = \frac14 \beta_1^{1,4,6} \SimS{300101} = \cdots = \frac14 (\beta_1^{1,4,6})^3 \SimS{100101} = (\beta_1^{1,4,6})^3 \bfone_{\Delta'}. \]
As immediately seen from the support of $S_{1,3}$ and Figure \ref{fig:ControlNet-3}, $S_{1,3}(\vl_i) \neq 0$ for $i\neq 1,2,12,13,17,22$. Hence $l_{16,3}(S_{1,3}) = \frac16 S_{1,3}(\vl_1) = \frac16 \neq 0$.
\end{remark}

\begin{remark}
Although $Q_d$ involves $n_d$ dual functionals each involving a sum with $\sum_{m = 1}^d {m\choose d}$ terms, many of the dual point averages (and hence the point evaluations) coincide. In particular the quasi-interpolant for the linear basis $\vs_1$ involves 10 point evaluations at the domain points. The quadratic bases $\vs_2, \tilde{\vs}_2$ (respectively cubic bases $\vs_3, \tilde{\vs}_3$) involve 16 (respectively 25) point evaluations, whose carriers are shown in Figure \ref{fig:domain_mesh}.
\end{remark}

\subsection{$L_\infty$ stability and distance to the control points}
Each basis $\vs$ in \eqref{eq:SS-bases} is stable in the $L_\infty$ norm with a condition number bounded independent of the geometry of $\PS$.

\begin{theorem}
Let $F = \vs^\rmT \bfc$ with $\vs$ as in \eqref{eq:SS-bases}. There is a constant $\kappa>0$ independent of the geometry of $\PS$, such that
\begin{equation}\label{eq:stability}
\kappa^{-1}\|\bfc\|_\infty \leq \|F\|_{L_\infty(\PSsmall)} \le \|\bfc\|_\infty.
\end{equation}
\end{theorem}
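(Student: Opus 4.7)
The upper bound is immediate from Property P3: since $S_{j,d}\ge 0$ and $\sum_j S_{j,d}\equiv 1$,
\[
|F(\bfx)| \;\le\; \sum_{j} |c_j|\, S_{j,d}(\bfx) \;\le\; \|\bfc\|_\infty,\qquad \bfx\in\PS,
\]
so $\|F\|_{L_\infty(\PSsmall)} \le \|\bfc\|_\infty$.

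For the lower bound, the plan is to pull back to a fixed reference triangle and invoke equivalence of norms on the finite-dimensional space $\RR^{n_d}$. Fix a reference triangle $\PS_0$ (say, equilateral of unit side), equipped with its own 12-split, and let $T\colon\PS_0\to\PS$ be the unique affine bijection matching ordered vertices; $T$ carries the 12-split of $\PS_0$ onto that of $\PS$. Simplex splines are affinely equivariant, $Q[T(\cK)]\circ T = Q[\cK]$, directly from the volume-ratio definition of $M[\cK]$ and $Q[\cK]$. Hence each S-basis $\vs$ in \eqref{eq:SS-bases} on $\PS$ is related to the corresponding S-basis $\vs^{(0)}$ on $\PS_0$ by $S_{j,d}\circ T = S_{j,d}^{(0)}$, so that $F\circ T = (\vs^{(0)})^{\rmT} \bfc$ involves the same coefficient vector $\bfc$. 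Since $T$ is a bijection, $\|F\|_{L_\infty(\PSsmall)} = \|F\circ T\|_{L_\infty(\PSsmall_0)}$, reducing the lower bound to the single fixed triangle $\PS_0$. On $\PS_0$, the map $\bfc\mapsto \|(\vs^{(0)})^{\rmT} \bfc\|_{L_\infty(\PSsmall_0)}$ is a norm on $\RR^{n_d}$ (positive-definite, as $\vs^{(0)}$ is a linearly independent set of continuous functions), and equivalence of norms on a finite-dimensional space supplies a constant $\kappa = \kappa(d) > 0$, independent of $\PS$, with $\|\bfc\|_\infty \le \kappa\, \|(\vs^{(0)})^{\rmT} \bfc\|_{L_\infty(\PSsmall_0)}$.

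The main technical point is the affine-equivariance step: one has to verify that the S-bases of Table~\ref{tab:dualpolynomials} transfer verbatim between $\PS_0$ and $\PS$ under $T$. This holds because each basis function is specified combinatorially (by knot multiplicities at the ten labelled vertices $\bfp_1,\ldots,\bfp_{10}$ of the split, arranged into $\cG$-equivalence classes), together with intrinsic scalar normalizations, all of which are preserved by $T$. If an explicit value of $\kappa$ were required instead of mere existence, one could alternatively construct de Boor--Fix dual functionals from the Marsden identity (Corollary~\ref{cor:marsdenlike}) and bound them via Markov-type inverse estimates on each subtriangle $\PS_k$; this would yield geometry-independent pointwise bounds $|c_j|\lesssim \|F\|_{L_\infty(\supp S_{j,d})}$ but is computationally heavier than the compactness argument above.
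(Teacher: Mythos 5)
Your proof is correct, but for the lower bound you take a genuinely different route from the paper. The paper argues via collocation at the domain points: with $\mM := [S_j(\bfxi_i)]_{i,j}$ it observes that the domain points are fixed barycentric combinations of the vertices, so $\mM$ has rational entries independent of the geometry of $\PS$, verifies (computationally) that $\mM$ is nonsingular, and then writes $\bfc = \mM^{-1}\bff$ with $\bff$ the values of $F$ at the domain points, giving the explicit constant $\kappa = \|\mM^{-1}\|_\infty$ (and, as a by-product, the explicit condition numbers $\kappa_2, \tilde\kappa_2, \kappa_3, \tilde\kappa_3$ quoted after the theorem and reused in the ensuing corollary on control points). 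You instead pull everything back to a fixed reference triangle using the affine invariance $Q[T(\cK)]\circ T = Q[\cK]$ of the \emph{area-normalized} simplex splines (which indeed holds, since the factor $\area(\PS)$ exactly compensates the Jacobian scaling of $M[\cK]$, and $T$ carries midpoints and the barycentre, hence the whole 12-split, correctly), and then invoke equivalence of norms on $\RR^{n_d}$; this is softer and requires no computation beyond linear independence of $\vs^{(0)}$, but it is non-constructive, whereas the paper's argument yields the explicit $\kappa$ that the subsequent corollary needs. Two small caveats: your positive-definiteness step cites continuity, which fails for $\vs_0$ (indicator functions); linear independence in $L_\infty$ is what you actually need and it suffices. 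And that linear independence is exactly the assertion that the sets in \eqref{eq:SS-bases} are bases of $\SS_d(\PSB)$ (Theorem \ref{thm:MainTheorem}), which is legitimate to assume here given the theorem's phrasing, though in the paper the nonsingularity of the collocation matrices is itself part of the evidence for that claim.
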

\begin{proof}
For $\vs = \vs_0, \vs_1, \vs_2$ this was shown in \cite[Theorem 6.2]{Cohen.Lyche.Riesenfeld13}, with the best possible constants. It remains to show this for $\vs = [S_1, \ldots, S_{n_d}] = \tilde{\vs}_2, \vs_3, \tilde{\vs}_3$. Let $\vlambda = [\lambda_1,\ldots,\lambda_{n_d}]^\rmT$ be the point evaluations at the corresponding domain points $\bfxi = [\bfxi_1,\ldots,\bfxi_{n_d}]$.

Applying these functionals to $F = \vs^\rmT \bfc$ yields a system $\bff := \vlambda F = \mM \bfc$, with \emph{collocation matrix} $\mM := \vlambda \vs^\rmT = [S_j(\xi_i)]_{i,j=1}^{n_d}$. A computation (see the Jupyter notebook) shows that each collocation matrix is nonsingular, and its elements are rational numbers independent of the geometry of $\PS$. Note that $\bfc = \mM^{-1} \bff$ are the coefficients of the Lagrange interpolant of the function values $\bff$ at the domain points $\bfxi$. Hence, since $\vs$ forms a partition of unity and therefore $\|\mM\|_\infty = 1$, \eqref{eq:stability} holds with
$\kappa := \|\mM\|_\infty \cdot \|\mM^{-1}\|_\infty = \|\mM^{-1}\|_\infty$.
\end{proof}

An exact computation in the Jupyter notebook shows that the $L_\infty$ condition numbers for the collocation matrices $\mM_1, \mM_2,\tilde{\mM}_2, \mM_3, \tilde{\mM}_3$ corresponding to the bases $\vs_1, \vs_2,\tilde{\vs}_2, \vs_3, \tilde{\vs}_3$ and domain points $\bfxi_1, \bfxi_2,\tilde{\bfxi}_2, \bfxi_3, \tilde{\bfxi}_3$ are
\begin{equation}
\begin{aligned}
                  \kappa_1 & = 1, \qquad
&                 \kappa_2 & = \frac{28}{9}  \approx  3.111, \qquad &         \kappa_3 & = \frac{415}{8}      =    51.875,\\
&\qquad & \tilde{\kappa}_2 & = \frac{295}{9} \approx 32.778, \qquad & \tilde{\kappa}_3 & = \frac{1297}{17} \approx 76.294.
\end{aligned} 
\end{equation}

Using a standard argument (presented in \cite[Corollary 1]{Lyche.Muntingh16}), one obtains an $O(h^2)$ bound for the distance between values of a spline function $F = \vs^\rmT \bfc$, with $\vs = \vs_2, \tilde{\vs}_2, \vs_3, \tilde{\vs}_3$, and its control points.

\begin{corollary}
Let $h$ be the longest edge in $\PS$. With $\vs = \vs_2, \tilde{\vs}_2, \vs_3, \tilde{\vs}_3$ and corresponding domain points $\bfxi_1,\ldots,\bfxi_{n_d}$ and condition number $\kappa$, let $F = \vs^\rmT \bfc$ with Hessian matrix (polynomial) $\mH$ and values $\bff = [F(\bfxi_1),\ldots,F(\bfxi_{n_d})]^\rmT$. Then
\[ \|\bff - \bfc\|_\infty \leq 2 \kappa h^2 \max_{\bfx \in \PSsmall} \| \mH(\bfx) \|_\infty. \]
\end{corollary}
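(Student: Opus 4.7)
The plan is to reduce the claim to a Taylor-type estimate by exploiting the \emph{affine reproduction property} that follows from the Marsden identity of Corollary~\ref{cor:marsdenlike}. Differentiating the Marsden identity $(1 - \bfx^\rmT \bfy)^d = \sum_j S_{j,d}(\bfx)\Psi_{j,d}(\bfy)$ once with respect to $\bfy$ at $\bfy = \mathbf{0}$ gives $\bfx = \sum_j \bfxi_{j,d} S_{j,d}(\bfx)$, since $\nabla_\bfy \Psi_{j,d}(\mathbf{0}) = -d\,\bfxi_{j,d}$. Combined with the partition-of-unity property, this shows that any affine polynomial $L$ satisfies $L = \sum_j L(\bfxi_j) S_j$, i.e., the coefficient vector representing $L$ in $\vs$ is exactly $[L(\bfxi_1),\ldots,L(\bfxi_{n_d})]^\rmT$.

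Given this, first I fix any $\bfx_0 \in \PS$ and let $L$ be the first-order Taylor polynomial of $F$ at $\bfx_0$. Let $\bfc^L$ denote the coefficient vector of $L$ in $\vs$ and $\bff^L := [L(\bfxi_1),\ldots, L(\bfxi_{n_d})]^\rmT$; the previous paragraph gives $\bfc^L = \bff^L$. The linear system defining the collocation matrix then yields $\mM(\bfc - \bfc^L) = \bff - \bff^L$, so by the stability estimate of the preceding theorem,
\[ \|\bfc - \bfc^L\|_\infty \leq \kappa \, \|\bff - \bff^L\|_\infty. \]
A triangle inequality, together with $\bfc^L = \bff^L$, gives
\[ \|\bff - \bfc\|_\infty \leq \|\bff - \bff^L\|_\infty + \|\bfc - \bfc^L\|_\infty \leq (1 + \kappa)\max_{1\leq j\leq n_d} |F(\bfxi_j) - L(\bfxi_j)|, \]
and since $\kappa \geq 1$ the prefactor may be replaced by $2\kappa$.

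It remains to bound the pointwise Taylor remainder $|F(\bfxi_j) - L(\bfxi_j)|$ uniformly by $h^2 \max_\bfx \|\mH(\bfx)\|_\infty$. Applying Taylor's theorem with the integral form of the remainder on each piecewise polynomial piece (and combining across knot lines using the $C^{d-1}$-smoothness of $F$), we obtain
\[ F(\bfxi_j) - L(\bfxi_j) = \tfrac{1}{2} (\bfxi_j - \bfx_0)^\rmT \mH(\bft_j) (\bfxi_j - \bfx_0) \]
for some $\bft_j \in \PS$. Since $h$ is the longest edge of $\PS$, the diameter of $\PS$ is at most $h$, hence $\|\bfxi_j - \bfx_0\|_2 \leq h$. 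For a symmetric $2 \times 2$ matrix one has $\|\mH\|_2 \leq \|\mH\|_\infty$, so $|\bfu^\rmT \mH \bfu| \leq \|\mH\|_\infty \|\bfu\|_2^2$, and thus $|F(\bfxi_j) - L(\bfxi_j)| \leq h^2 \max_{\bfx \in \PSsmall} \|\mH(\bfx)\|_\infty$ (absorbing the factor of $\tfrac{1}{2}$ into the bound on the quadratic form). Combining gives the desired estimate.

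The only delicate step is the Taylor remainder bound across subtriangle boundaries, since $F$ is only $C^{d-1}$ on $\PSB$ rather than $C^\infty$; however, since $d \geq 2$ here, $F$ is at least $C^1$ across knot lines, and the Hessian is integrable along any segment in $\PS$, so the integral-remainder argument applies after subdividing the segment between $\bfxi_j$ and $\bfx_0$ at its intersections with the knot lines of $\PSB$. The Marsden-based affine reproduction is the key conceptual step, while the rest is bookkeeping.
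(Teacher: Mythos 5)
Your proof is correct and is precisely the ``standard argument'' the paper invokes by citing \cite[Corollary 1]{Lyche.Muntingh16}: linear reproduction with domain-point coefficients (via the differentiated Marsden identity), the collocation bound $\|\bfc-\bfc^L\|_\infty\le\kappa\|\bff-\bff^L\|_\infty$ with $\kappa=\|\mM^{-1}\|_\infty$, and a Taylor remainder estimate (correctly handled in integral form, since $F$ is only $C^{1}$ for $d=2$). In fact your chain of inequalities yields the slightly sharper constant $(1+\kappa)/2\le\kappa$ in place of $2\kappa$, so the stated bound follows a fortiori.
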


\section{Smooth surface joins}\label{sec:SmoothnessConditions}
Let $\PS := [\bfp_1,\bfp_2,\bfp_3]$ and $\hat{\PS} := [\bfp_1, \bfp_2, \hat{\bfp}_3]$ be triangles sharing the edge $e := [\bfp_1, \bfp_2]$ and with 12-splits $\PSB$ and $\hat{\PSB}$. On these triangles we consider the spline spaces $\SS_3(\PSB)$ and $\SS_3(\hat{\PSB})$ with bases $\vs_3 = [S_{i,3}]_i$ and $\hat{\vs}_3 = [\hat{S}_{i,3}]_i$. Here $\hat{\vs}_3$ is the pull-back of $\vs_3$ under the affine map $\bfA: \hat{\PS}\longrightarrow \PS$ that maps $\hat{\bfp}_3$ to $\bfp_3$ and leaves $\bfp_1, \bfp_2$ invariant, i.e., $\hat{\vs}_3 = \vs_3 \circ \bfA$. In this section we derive conditions for smooth joins of 
\begin{equation}\label{eq:neighboringsimplexsplines}
F(\bfx) := \sum_{i=1}^{16} c_i S_{\sigma_i,3} (\bfx), \ \bfx\in \PS,\qquad
\hat{F}(\bfx) := \sum_{i=1}^{16} \hat{c}_i \hat{S}_{\sigma_i,3} (\bfx), \ \bfx\in \hat{\PS},
\end{equation}
where we used the reordering $i\mapsto \sigma_i$ (cf. Figure \ref{fig:basis-order}) defined by
\begin{equation}\label{eq:reordering}
\sigma = [\sigma_i]_{i=1}^{16} = [1,2,3,4,5,12,13,14,6,11,16,7,15,10,8,9].
\end{equation}

\begin{figure}
\includegraphics[scale=0.7]{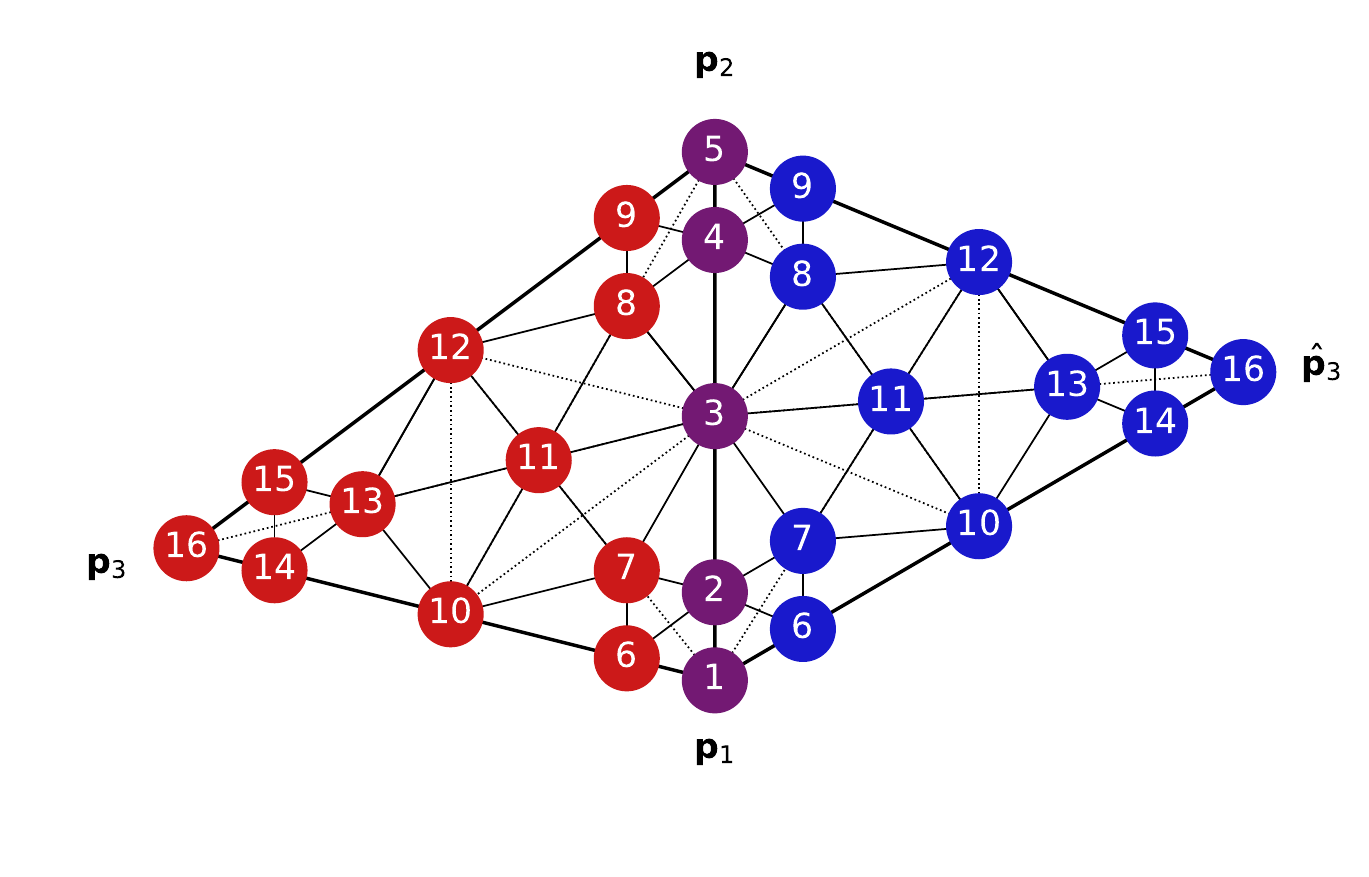}
\caption{Domain points of the reordered bases $\vs^\sigma_3 = [S_{\sigma_i, 3}]$ (left) and $\hat{\vs}^\sigma_3 = [\hat{S}_{\sigma_i, 3}]$ (right).}\label{fig:basis-order}
\end{figure}

\begin{remark}
The reordering \eqref{eq:reordering} is chosen such that the splines $S_{\sigma_i, 3}$ have an increasing number of knots outside of $e$. In particular, there is 1 such knot for $S_{\sigma_1, 3}, \ldots, S_{\sigma_5, 3}$, 2 knots for $S_{\sigma_6, 3}, \ldots, S_{\sigma_9, 3}$, 3 knots for $S_{\sigma_{10}, 3}, S_{\sigma_{11}, 3}, S_{\sigma_{12}, 3}$, and more than 3 knots for $S_{\sigma_{13}, 3}, \ldots, S_{\sigma_{16}, 3}$. By \eqref{eq:Qdiff}, this implies that after this reordering only the first 5 (resp. 5 + 4, resp. 5 + 4 + 3) splines in $\vs^\sigma_3$ are involved in the $C^0$ (resp. $C^1$, resp. $C^2$) conditions, as only these (resp. their derivatives, resp. their 2nd order derivatives) are not identically zero on $e$.
\end{remark}

Imposing a smooth join of $F$ and $\hat{F}$ along $e$ translates into B\'ezier-like linear relations among the ordinates $c_i$ and $\hat{c}_i$ (Property P8).

\begin{theorem}
Let $\beta_1,\beta_2,\beta_3$ be the barycentric coordinates of $\hat{\bfp}_3$ with respect to the triangle $\PS$. Then $F$ and $\hat{F}$ meet with
\noindent$C^0$-smoothness if and only if
\[\hat{c}_1 = c_1,\qquad \hat{c}_2 = c_2,\qquad \hat{c}_3 = c_3,\qquad \hat{c}_4 = c_4,\qquad \hat{c}_5 = c_5;\]
\noindent$C^1$-smoothness if and only if in addition
\[ \hat{c}_6 = \beta_1 c_1 + \beta_2 c_2 + \beta_3 c_6, \qquad
   \hat{c}_7 = \beta_1 c_2 + \beta_2 \frac{c_2 + c_3}{2} + \beta_3 c_7,\]
\[ \hat{c}_9 = \beta_1 c_4 + \beta_2 c_5 + \beta_3 c_9, \qquad
   \hat{c}_8 = \beta_2 c_4 + \beta_1 \frac{c_3 + c_4}{2} + \beta_3 c_8;\]
\noindent$C^2$-smoothness if and only if in addition
\begin{align*}
\hat{c}_{10} & = 2\beta_1\beta_2 \frac{3c_2 - c_1}{2} + 2\beta_1\beta_3 \frac{3c_6 - c_1}{2} + 2\beta_2\beta_3 \frac{4c_7 - c_2 - c_6}{2} + \beta_1^2 c_1 + \beta_2^2 c_3 + \beta_3^2 c_{10},\\
\hat{c}_{12} & = 2\beta_1\beta_2 \frac{3c_4 - c_5}{2} + 2\beta_2\beta_3 \frac{3c_9 - c_5}{2} + 2\beta_1\beta_3 \frac{4c_8 - c_4 - c_9}{2} + \beta_1^2 c_3 + \beta_2^2 c_5 + \beta_3^2 c_{12},\\
\hat{c}_{11} & = + 2\beta_1\beta_2\frac{c_1 - 2c_2 + 4c_3 - 2c_4 + c_5}{2} + \beta_3^2 c_{11}\\
               & \quad + 2\beta_1\beta_3 \frac{ c_1 - 2c_2 + c_3 - 3c_6 + 6 c_7 - 2c_8 + c_9}{2} + \beta_2^2(2c_4 - c_5) \\
               & \quad + 2\beta_2\beta_3 \frac{ c_3 - 2c_4 + c_5 - 3c_9 + 6 c_8 - 2c_7 + c_6}{2} +  \beta_1^2(2c_2 - c_1).
\end{align*}
\end{theorem}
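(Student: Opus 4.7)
My plan is to match, in succession, boundary values and transverse directional derivatives of $F$ and $\hat F$ along the shared edge $e = [\bfp_1, \bfp_2]$, using the restriction and differentiation data in Table \ref{tab:derivative_restrictions}.

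\emph{$C^0$ case.} By Property P2 and the third column of Table \ref{tab:derivative_restrictions}, $S_{\sigma_i,3}|_e = B^3_i$ for $i = 1,\ldots,5$ and vanishes for $i \geq 6$, and the same holds for $\hat S_{\sigma_i,3}$. Linear independence of $B^3_1, \ldots, B^3_5$ immediately forces $\hat c_i = c_i$ for $i = 1, \ldots, 5$.

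\emph{$C^1$ case.} Given $C^0$, it suffices to match one transverse directional derivative $D_\vu F|_e = D_\vu \hat F|_e$. Fix $\vu \in \RR^2$ and let $(\alpha_1,\alpha_2,\alpha_3)$ and $(\hat\alpha_1,\hat\alpha_2,\hat\alpha_3)$ be its directional coordinates with respect to $\PS$ and $\hat\PS$. Substituting $\hat\bfp_3 = \beta_1\bfp_1 + \beta_2\bfp_2 + \beta_3\bfp_3$ into $\vu = \hat\alpha_1\bfp_1 + \hat\alpha_2\bfp_2 + \hat\alpha_3\hat\bfp_3$ yields the conversion
\[ \alpha_i = \hat\alpha_i + \beta_i \hat\alpha_3 \quad (i = 1,2), \qquad \alpha_3 = \beta_3 \hat\alpha_3. \]
By the fourth column of Table \ref{tab:derivative_restrictions}, only $c_1, \ldots, c_9$ contribute to $\frac13 D_\vu F|_e$, which is a linear combination of $B^2_1, \ldots, B^2_4$. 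Equating the coefficients of each $B^2_j$ on both sides, using the above coordinate conversion and the established $C^0$ relations, and comparing coefficients of the transverse component $\hat\alpha_3$ yields four linear equations with the claimed solutions for $\hat c_6, \hat c_7, \hat c_8, \hat c_9$. A sample calculation for the $B^2_1$-coefficient gives $2\alpha_3 c_6 = 2\hat\alpha_3(\hat c_6 - \beta_1 c_1 - \beta_2 c_2)$, which simplifies to the stated identity for $\hat c_6$.

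\emph{$C^2$ case.} Given $C^1$, the remaining obstruction is the jump in the pure transverse second derivative $D_\vu^2 F|_e$. By the last column of Table \ref{tab:derivative_restrictions}, only $c_1, \ldots, c_{12}$ contribute, and $\frac{1}{6} D_\vu^2 F|_e$ is a linear combination of $B^1_1, B^1_2, B^1_3$. Expanding both $\frac{1}{6} D_\vu^2 F|_e$ and $\frac{1}{6} D_\vu^2 \hat F|_e$, substituting the $C^0$ and $C^1$ relations for $\hat c_1, \ldots, \hat c_9$ already derived, and equating the coefficients of $\hat\alpha_3^2$ in each $B^1_j$ produces three linear equations in $\hat c_{10}, \hat c_{11}, \hat c_{12}$ yielding the claimed formulas.

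The main obstacle will be the bookkeeping in the $C^2$ step: the coefficients $c_6, \ldots, c_9$ feed into the already-determined expressions for $\hat c_6, \ldots, \hat c_9$, which in turn contribute many cross terms to the $\hat\alpha_3^2$-coefficients (explaining the appearance of combinations like $4c_7 - c_2 - c_6$ and the more intricate expression for $\hat c_{11}$). This is most conveniently handled symbolically, as in the Jupyter notebook \cite{WebsiteGeorg}, but can in principle be verified by hand using Table \ref{tab:derivative_restrictions} and the coordinate conversion above.
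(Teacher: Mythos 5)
Your proposal is correct and follows essentially the same route as the paper: both reduce $C^r$-smoothness to matching transverse directional derivatives of order $k\le r$ restricted to $e$, expand these via Table \ref{tab:derivative_restrictions} in the univariate B-splines $B_j^{3-k}$, and solve the resulting triangular linear system for $\hat{c}_1,\ldots,\hat{c}_{12}$ order by order (with the detailed elimination delegated to symbolic computation, as in the paper's Jupyter notebook). Your use of a general direction with the conversion $\alpha_i=\hat\alpha_i+\beta_i\hat\alpha_3$, $\alpha_3=\beta_3\hat\alpha_3$, and extraction of the $\hat\alpha_3^k$-components is just a mild bookkeeping variant of the paper's fixed choice $\vu=\hat{\bfp}_3-\bfp_1$ after an affine change of coordinates.
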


\begin{proof}
By the barycentric nature of the statement, we can change coordinates by the linear affine map that sends $\bfp_1\longmapsto (0,0), \bfp_2\longmapsto (1,0)$, and $\bfp_3\longmapsto (0,1)$. In these coordinates, $\hat{\bfp}_3 = \beta_1(0,0) + \beta_2 (1,0) + \beta_3 (0,1) = (\beta_2, \beta_3)$. Let $\bfu := \hat{\bfp}_3 - \bfp_1 = (\beta_2, \beta_3)$. For $r = 0,1,2$, the splines $F$ and $\hat{F}$ meet with $C^r$-smoothness along $e$ if and only if $D^k_\bfu F(\cdot, 0) = D^k_\bfu \hat{F}(\cdot, 0)$ for $k = 0,\ldots,r$. Substituting \eqref{eq:neighboringsimplexsplines} this is equivalent to
\begin{equation}
\sum_{i=1}^{16} c_i D^k_\bfu S_{\sigma_i,3} (\cdot, 0) =
\sum_{i=1}^{16} \hat{c}_i D^k_\bfu \hat{S}_{\sigma_i,3} (\cdot, 0),\qquad k = 0,\ldots, r,
\end{equation}
which using Table \ref{tab:derivative_restrictions} reduces to a sparse system
\[ \sum_{j = 1}^{5-k} r_{kj} B_j^{3-k} = 0,\qquad k = 0,\ldots, r, \]
where $r_{kj}$ is a linear combination of the $c_i, \hat{c}_i$ with $i = 1,\ldots, n_{k+1}$, with $n_1 = 5$, $n_2 = 5+4$, and $n_3 = 5 + 4 + 3$. This system holds identically if and only if $r_{kj} = 0$ for $j = 1,\ldots, 5-k$ and $k=0,\ldots, r$. Let $n_0 = 0$. For $k = 0,1,2,3$ one solves for $\hat{c}_{n_k + 1}, \ldots, \hat{c}_{n_{k + 1}}$, each time eliminating the ordinates $\hat{c}_i$ that were previously obtained, resulting in the smoothness relations of the Theorem; see the Jupyter notebook for details.
\end{proof}

\begin{remark}
Since we forced our S-bases to restrict to B-spline bases on the boundary, the presented local bases can trivially be extended to global bases with $C^0$-smoothness across the macrotriangles.
\end{remark}

\begin{remark}
To obtain global $C^2$-smoothness for $d=3$, maximal sharing of the degrees of freedom is obtained by specifying values, first-order and second-order derivatives at the vertices of the coarse triangulation. This would amount to 18 degrees of freedom on a single macro triangle, exceeding the 16 available degrees of freedom. In particular, the degrees of freedom along the edges will be overdetermined. Hence global $C^2$-smoothness cannot be obtained in this way.
\end{remark}

\begin{remark}
For $d = 3$, it is not clear how to extend these local bases to bases with $C^1$-smoothness on triangulations. One strategy is to attempt to construct a dual basis that determines the value of the spline, as well as the value of its cross-boundary derivative, at the edges of the macro triangles, thus forcing $C^0$- and $C^1$-smoothness across these edges. On a single edge, this would require fixing 5 degrees of freedom for obtaining $C^0$-smoothness and an additional 4 degrees of freedom for attaining $C^1$-smoothness. Again maximal sharing is obtained by locating degrees of freedom (i.e., values and first-order derivatives) at the vertices of the macro triangles. For each edge, one would additionally need 1 degree of freedom for $C^0$ and 2 degrees of freedom for $C^1$, exceeding the remaining 7 of the available 16 degrees of freedom. Hence global $C^1$-smoothness on a general triangulation cannot be obtained in this way. Whether this is possible on specific triangulations, such as cells, is an open problem.
\end{remark}



\bibliographystyle{plain}

\bibliography{biblio}

\end{document}